\theoremstyle{definition}
\newtheorem{Def}{Definition}[section]
\theoremstyle{plain}
\newtheorem{Thm}{Theorem}[section]
\newtheorem{Lem}[Thm]{Lemma}
\newtheorem{Prop}[Thm]{Proposition}
\newtheorem{rem}{Remark}
\newtheorem{Ass}{Assumptions}
\newcommand{\R}{\mathbb{R}}
\newcommand{\C}{\mathbb{C}}
\newcommand{\Rn}{\mathbb{R}^n}
\newcommand{\N}{\mathbb{N}}
\newcommand{\m}{\mathbf{m}}
\newcommand{\x}{\langle x\rangle}
\newcommand{\csi}{\langle \xi \rangle}
\newcommand{\cl}{\textnormal{cl}}
\newcommand{\op}{\textnormal{Op}}
\newcommand{\Res}{\mathrm{Res}}
\newcommand{\simb}{\textnormal{sym}}
\renewcommand{\Re}{\mathrm{Re}}
\def\SX{ {\mathcal{S}} }
\def\Op#1{ {\mathrm{Op}} \left( #1 \right) }
\def\Symp#1{ {\mathrm{Sym_{p}}} \left( #1 \right) }
\def\norm#1{ \left< #1 \right> }
\DeclareMathOperator{\TR}{\mathrm{TR}}
\newcommand{\biindice}[3]%
{
\begin{array}[t]{c}
#1\\
{\scriptstyle #2}\\
{\scriptstyle #3}
\end{array}
}
\author{Ubertino Battisti}
\address{Dipartimento di Matematica, Università di Torino, Italy}
\email{ubertino.battisti@unito.it}
\author{Sandro Coriasco}
\address{Dipartimento di Matematica, Università di Torino, Italy}
\email{sandro.coriasco@unito.it}
\title{Wodzicki Residue for\\ Operators on Manifolds with Cylindrical Ends}
\keywords{Wodzicki Residue, $SG$-calculus, Manifold with Cylindrical Ends, Weyl formula}
\subjclass[2000]{Primary: 58J40; Secondary: 58J42, 47A10, 47G30, 47L15}
\begin{document}

\begin{abstract}
We define the Wodzicki Residue $\TR(A)$ for $A$ belonging to a space of operators with double order, denoted  $L^{m_1,m_2}_\cl$. Such operators are globally defined initially on $\R^n$ and then, more generally, on a class of non-compact manifolds, namely, the manifolds with cylindrical ends. The definition is based on the analysis of the associate zeta function $\zeta(A,z)$. Using this approach, under suitable ellipticity assumptions, we also compute a two terms leading part of the Weyl formula for a positive selfadjoint operator $A\in L^{m_1,m_2}_\cl$ in the case $m_1=m_2$.
\end{abstract}

\maketitle

\section*{Introduction}

The aim of this paper is to extend the definition of Wodzicki Residue to the class of the so-called
$SG$-classical operators on manifolds with cylindrical ends, through the analysis of their complex powers and of the associate zeta functions. In view of the properties of the underlying calculus, our definition holds for $SG$-classical operators $A$ globally defined on $\R^n$ as well as for their counterparts globally defined on manifolds with cylindrical ends.
Under appropriate conditions, the information so obtained, concerning the meromorphic structure of 
$\zeta(A,z)$, is precise enough to allow us to improve known results about the Weyl formula for the eigenvalue asymptotics of elliptic $SG$-classical operators.

More explicitly, $SG$-pseudodifferential operators $A=a(x,D)=\Op{a}$ can be defined via the usual left-quantization
\[
	Au(x)= \frac{1}{(2 \pi)^{n}} \int e^{i x \cdot \xi} a(x, \xi) \hat u(\xi) d\xi,\quad u\in\SX(\R^n),
\]
starting from symbols $a(x,\xi) \in C^\infty(\R^n\times\R^n)$ with the property that, for arbitrary multiindices $\alpha,\beta$, there exist constants $C_{\alpha\beta}\ge0$ such that the estimates 
\begin{equation}
	\label{disSG}
	|D_\xi^{\alpha}D_x^{\beta} a(x, \xi)| \leq C_{\alpha\beta} \langle\xi\rangle^{m_1-|\alpha|}\langle x\rangle^{m_2-|\beta|}
\end{equation}
hold for fixed $m_1,m_2\in\R$ and all $(x, \xi) \in \R^n \times \R^n$, where $\langle u \rangle=\sqrt{1+|u|^2}$, $u\in\R^n$.
Symbols of this type belong to the class denoted by $SG^{m_1,m_2}(\Rn)$, and the corresponding operators constitute the class
$L^{m_1,m_2}(\Rn)=\Op{SG^{m_1,m_2}(\Rn)}$. In the sequel we will often simply write $SG^{m_1,m_2}$ and $L^{m_1,m_2}$, respectively,
fixing the dimension of the (non-compact) base manifold to $n$. 

These classes of operators were first introduced on $\R^n$ by H.O.~Cordes \cite{CO} and C.~Parenti \cite{PA72}, see also R.~Melrose \cite{ME}. They form a graded algebra, i.e., $L^{r_1,r_2}\circ L^{m_1,m_2}\subseteq L^{r_1+m_1,r_2+m_2}$, whose residual elements are operators with symbols in $\displaystyle SG^{-\infty}(\R^n)= \bigcap_{(m_1,m_2) \in \R^2} SG^{m_1,m_2} (\R^n)=\SX(\R^{2n})$, that is, those having kernel in $\SX(\R^{2n})$, continuously mapping $\SX^\prime(\R^n)$ to $\SX(\R^n)$. An operator $A=\Op{a}\in L^{m_1,m_2}$ is called $SG$-elliptic if there exists $R\ge0$ such that $a(x,\xi)$ is invertible for $|x|+|\xi|\ge R$ and
\[
	a(x,\xi)^{-1}=O(\norm{\xi}^{-m_1}\norm{x}^{-m_2}).
\] 
Operators in $L^{m_1,m_2}$ act continuously from $\SX(\R^n)$ to itself, and extend as continuous operators from $\SX^\prime(\R^n)$ to itself and from $H^{s_1,s_2}(\R^n)$ to $H^{s_1-m_1,s_2-m_2}(\R^n)$, where $H^{t_1,t_2}(\R^n)$, $t_1,t_2\in\R$, denotes the weighted Sobolev space
\begin{align*}
  	H^{t_1,t_2}(\R^n)&= \{u \in \SX^\prime(\R^{n}) \colon \|u\|_{t_1,t_2}= \|\Op{\pi_{t_1,t_2}}u\|_{L^2}< \infty\},
 	\\
  	\pi_{t_1,t_2}(x,\xi)&= \langle \xi \rangle^{t_1} \langle x\rangle^{t_2}.
\end{align*}
Incidentally, note that $H^{s_1,s_2}(\R^n)\hookrightarrow H^{r_1,r_2}(\R^n)$ when $s_1\ge r_1$ and $s_2\ge r_2$, with compact embedding when both inequalities are strict, while
\[
	\displaystyle\SX(\R^n)=\bigcap_{(s_1,s_2) \in \R^2} H^{s_1,s_2}(\R^n)
	\mbox{ and }
	\displaystyle\SX^\prime(\R^n)=\bigcup_{(s_1,s_2) \in \R^2} H^{s_1,s_2}(\R^n).
\]
An elliptic $SG$-operator $A \in L^{m_1,m_2}$ admits a parametrix $P\in L^{-m_1,-m_2}$ such that
\[
PA=I + K_1, \quad AP= I+ K_2,
\]
for suitable $K_1, K_2 \in L^{-\infty}$, and it turns out to be a Fredholm operator.
In 1987, E.~Schrohe \cite{SC87} introduced a class of non-compact manifolds, the so-called $SG$-manifolds, on which it is possible to transfer from $\R^n$ the whole $SG$-calculus: in short, these are manifolds which admit a finite atlas whose changes of coordinates behave like symbols of order $(0,1)$ (see \cite{SC87} for details and additional technical hypotheses). The manifolds with cylindrical ends are a special case of $SG$-manifolds, on which also the concept of $SG$-classical operator makes sense: moreover, the principal symbol of a $SG$-classical operator $A$ on a manifold with cylindrical ends $M$, in this case a triple $\sigma(A)=(\sigma_\psi(A),\sigma_e(A),\sigma_{\psi e}(A))$, has an invariant meaning on $M$, see Y.~Egorov and B.-W.~Schulze \cite{ES97}, L.~Maniccia and P.~Panarese \cite{MP02}, R.~Melrose \cite{ME} and Section \ref{cylexit} below.

Wodzicki Residue was first considered by M.~Wodzicki in 1984 \cite{WO84}, in the setting of pseudodifferential operators on closed manifold, while studying the meromorphic continuation of the zeta function of elliptic operators: the latter had been originally defined by R.~Seeley \cite{SE67}. Wodzicki Residue turns out to be a trace on the algebra of classical operators modulo smoothing operators. Moreover, if the dimension of the closed manifold is larger then one, it is the unique trace on such algebra, up to multiplication by a constant 
(the situation in dimension one is different, as a consequence of the fact that, in such a case, $S^*M$ is not connected, cfr. C. Kassel \cite{KA89}). 
In 1985, V.~Guillemin \cite{GU85} independently defined the so-called Symplectic Residue, equivalent to Wodzicki Residue, with the aim
of ``finding a \emph{soft} proof of Weyl formula'', see also \cite{LJ10} for an extension of techniques used by V. Guillemin. For an overview on the subject, see also the monograph of S. Scott \cite{SC10}.
Wodzicki Residue, sometimes called non-commutative trace, gained a growing interest in the years, also in view of the links with non-commutative geometry and Dixmier trace, see, e.g., A. Connes \cite{CO88}, B. Ammann and C. B\"ar \cite{AB02}, W. Kalau and M. Walze \cite{KW95}, D. Kastler \cite{KA95}, R. Ponge \cite{PO08}, U. Battisti and S. Coriasco \cite{BC10}. The concept has been extended to different situations: manifolds with boundary by B.V. Fedosov, F. Golse, E. Leichtnam and E. Schrohe \cite{FE96}, conic manifolds by E. Schrohe \cite{SC97} and J.B. Gil and P.A. Loya \cite{LO02}, operators with log-polyhomogeneous symbols by M. Lesch \cite{Le99}, anisotropic operators on $\R^n$ by P. Boggiatto and F. Nicola \cite{BN03}, Heisenberg Calculus by R. Ponge \cite{PO07}, holomorphic families of psedudofferential operators by S. Paycha and S. Scott  \cite{PS07}.
Wodzicki Residue in the case of $SG$-calculus on $\R^n$ was defined by F. Nicola \cite{NI03}, with an approach which differs by the one used here.

Our purpose is to show the relation between Wodzicki Residue $\TR(A)$ and the zeta function 
$\zeta(A,z)$ of elliptic $SG$-classical operators $A\in L^{m_1,m_2}_\cl$ of integer order,
on manifolds with cylindrical ends. Under suitable assumptions, see Section \ref{sec:zpower} below, it turns out that $\zeta(A,z)$ is holomorphic for $\Re(-z)$ big enough, and that it can be extended as a meromorphic function on the whole complex plane, with poles of order at most two. Then, we define the Wodzicki Residue of $A$ as 
\[
\TR(A)=m_1m_2\mathrm{Res}^2_{z=1}(\zeta(A,z))=m_1m_2\lim_{z \to 1} (z-1)^2 \zeta(A,z).
\]
We prove in Theorem \ref{thm:equiv} that this definition agrees, for operators on $\R^n$, with the one given in \cite{NI03}. Our viewpoint, compared with the approach by F. Nicola, looks more convenient when dealing with $SG$-classical operators on manifolds with cylindrical ends, on which we are  focused. $\TR(A)$ is then extended to general $SG$-classical operators with integer order. In order to evaluate the residue of $\zeta(A,z)$ at $z=1$, that is
\[
\begin{split}
\widehat{\TR}_{x,\xi}(A)&= \lim_{z\to 1} (z-1) \Big[\zeta(A, z)-
 \frac{\Res^2_{z=1}(\zeta(A,z))}{(z-1)^2}\Big]\\
 &=-  \frac{1}{m_1}\widehat{\textnormal{Tr}}_\psi(A) - \frac{1}{m_2} \widehat{\textnormal{Tr}}_e(A) + \frac{1}{m_1 m_2} \widehat{\TR}_{\theta}(A),
\end{split}
\]
we obtain the functionals $\widehat{\textnormal{Tr}}_\psi, \widehat{\textnormal{Tr}}_e$, introduced in \cite{NI03}, together with the additional functional
$\widehat{\TR}_\theta(A)$, that we have called the \textit{angular term}. $\widehat{\TR}_\theta(A)$ plays an essential role in the study of the eigenvalue asymptotics of suitable elliptic $SG$-classical operators: our results, in fact, give an alternative proof of the corresponding Weyl formulae, on $\R^n$ as well as on manifolds with cylindrical ends, see F. Nicola \cite{NI03} and L. Maniccia and P. Panarese \cite{MP02}, respectively. Moreover, in one case we obtain a more precise formula.
Indeed, let $A \in L^{m_1, m_2}_\cl(\R^n)$ be a positive selfadjoint elliptic $SG$-classical operator of order $(m_1, m_2)$, $m_1,m_2$ positive integers, and let $\displaystyle N_A(\lambda)=\sum_{\lambda_j \le \lambda}1$ denote the associate counting function, $\{\lambda_j\}_{j\ge1}$ being the (non-decreasing) sequence of the eigenvalues of $A$. Then, if $A$ satisfies the hypotheses mentioned above, for certain $\delta_i>0$, $i=0,1,2$, and $\lambda\to+\infty$,
\begin{equation}
\label{weylnor}
\begin{split}
N_A(\lambda) =& C^1_0 \lambda^{\frac{n}{m}}\log \lambda + C^2_0 \lambda^{\frac {n}{m}}+ O(\lambda^{\frac{n}{m}-\delta_0})  \quad \mbox{for } m_1=m_2=m, \\
N_A(\lambda) =& C_1 \lambda^{\frac{n}{m_1}}+ O(\lambda^{\frac{n}{m_1}-\delta_1})  \quad \mbox{for } m_1<m_2,\\
N_A(\lambda) =& C_2 \lambda^{\frac{n}{m_2}}+ O(\lambda^{\frac{n}{m_2}-\delta_2}) \quad \mbox{for }m_1>m_2,
\end{split}
\end{equation}
where the constants $C^1_0, C^2_0, C_1, C_2$ depend on $A$, see \eqref{constweyla}-\eqref{constweylc} below.
There are others setting in which the counting function $N(\lambda)$ has an asymptotic behaviour of type $\lambda^m \log (\lambda)$, see, for example, \cite{LO02}, \cite{MO08}, \cite{BA10}.
Christiansen and Zworski \cite{CZ95} studied the Weyl asymptotics of the counting function of the Laplacian on manifolds with cylindrical ends, while here we focus on operators with discrete spectrum.

In Theorem \ref{weylrn} we prove that if $A \in L^{m_1, m_2}_\cl(M)$ is a $SG$-classical operator on a manifold with cylindrical ends $M$, fulfilling assumptions completely similar to those required for operators on 
$\R^n$, \eqref{weylnor} holds as well. While the asymptotic behaviours of $N_A(\lambda)$ in \eqref{weylnor} already appeared in such form for the cases $m_1 \not= m_2$, we can explicitely write the second term in the case $m_1=m_2=m$: only the logarithmic term in $\lambda$ is present in the mentioned papers \cite{MP02, NI03}, and, to the best knowledge of the authors, the fact that the second term is the power
$C^2_0 \lambda^{\frac {n}{m}}$ has not been proved elsewhere for the operators we consider. 

The paper is organised as follows. In Section \ref{sec:zpower} we review some of the standard facts about the calculus of $SG$-classical operators. To make our exposition more self-contained, we also sketch the construction of the complex powers of a $SG$-classical elliptic operator $A$ under suitable hypotheses. Existence and properties of $A^z$, $z\in\C$, were proved by E. Schrohe, J. Seiler and L. Maniccia in \cite{MSS06}, to which we mainly refere. However, we also give a different proof of the fact that $A^z$ is still a $SG$-classical operator, using the isomorphism between the classical $SG$-symbols and the smooth functions on the product of two $n$-dimensional closed balls. In Section \ref{seczeta} we proceed with the study of the function $\zeta(A,z)$ and the definition of Wodzicki Residue of $A$. Finally, in Section \ref{cylexit} we extend our results to the case of $SG$-classical elliptic operators on manifolds with cylindrical ends and prove the asymptotic expansions \eqref{weylnor}. 

\section*{Acknowledgements}
The authors wish to thank F. Nicola and L. Rodino for useful discussions and suggestions.

%
%

\section{\texorpdfstring{Complex powers of $SG$-classical operators}{Complex powers of $SG$-classical operators}}
\label{sec:zpower}
From now on, we will be concerned with the subclass of operators given by those elements $A\in L^{m_1,m_2}(\R^n)$, 
$(m_1,m_2)\in\R^2$, which are $SG$-classical, that is, $A=\Op{a}$ with $a\in SG^{m_1,m_2}_\cl(\R^n)\subset SG^{m_1,m_2}(\R^n)$. We begin recalling the basic definitions and results (see, e.g., \cite{ES97,MSS06} for additional details and proofs).
\setcounter{equation}{0}

\begin{Def}
\label{def:sgclass-a}
\begin{itemize}
\item[i)]A symbol $a(x, \xi)$ belongs to the class $SG^{m_1,m_2}_{\cl(\xi)}(\R^n)$ if there exist $a_{m_1-i, \cdot} (x, \xi)\in \widetilde{\mathscr{H}}_\xi^{m_1-i}(\R^n)$, $i=0,1,\dots$, homogeneous functions of order $m_1-i$ with respect to the variable $\xi$, smooth with respect to the variable $x$, such that, for a $0$-excision function $\omega$,
\[
a(x, \xi) - \sum_{i=0}^{N-1}\omega(\xi) \, a_{m_1-i, \cdot} (x, \xi)\in SG^{m_1-N, m_2}(\R^n), \quad N=1,2, \ldots;
\]
\item[ii)]A symbol $a(x, \xi) $ belongs to the class $SG_{\cl(x)}^{m_1,m_2}(\R^n)$ if there exist $a_{\cdot, m_2-k}(x, \xi)\in \widetilde{\mathscr{H}}_x^{m_2-k}(\R^n)$, $k=0,\,\dots$, homogeneous functions of order $m_2-k$ with respect to the variable $x$, smooth with respect to the variable $\xi$, such that, for a $0$-excision function $\omega$,
\[
a(x, \xi)- \sum_{k=0}^{N-1}\omega(x) \, a_{\cdot, m_2-k}(x,\xi) \in SG^{m_1, m_2-N}(\R^n), \quad N=1,2, \ldots
\]
\end{itemize}
\end{Def}
\begin{Def}
\label{def:sgclass-b}
A symbol $a(x,\xi)$ is $SG$-classical, and we write $a \in SG_{\cl(x,\xi)}^{m_1,m_2}(\R^n)=SG_{\cl}^{m_1,m_2}(\R^n)=SG_{\cl}^{m_1,m_2}$, if
\begin{itemize}
\item[i)] there exist $a_{m_1-j, \cdot} (x, \xi)\in \widetilde{\mathscr{H}}_\xi^{m_1-j}(\R^n)$ such that, 
for a $0$-excision function $\omega$, $\omega(\xi) \, a_{m_1-j, \cdot} (x, \xi)\in SG_{\cl(x)}^{m_1-j, m_2}(\R^n)$ and
\[
a(x, \xi)- \sum_{j=0}^{N-1} \omega(\xi) \, a_{m_1-j, \cdot}(x, \xi) \in SG^{m_1-N, m_2}(\R^n), \quad N=1,2,\dots;
\]
\item[ii)] there exist $a_{\cdot, m_2-k}(x, \xi)\in \widetilde{\mathscr{H}}_x^{m_2-k}(\R^n)$ such that, 
for a $0$-excision function $\omega$, $\omega(x)\,a_{\cdot, m_2-k}(x, \xi)\in SG_{\cl(\xi)}^{m_1, m_2-k}(\R^n)$ and
\[
a(x, \xi) - \sum_{k=0}^{N-1} \omega(x) \, a_{\cdot, m_2-k} \in SG^{m_1, m_2-N}(\R^n), \quad N=1,2,\dots
\] 
\end{itemize}
We set $L_{\cl(x, \xi)}^{m_1,m_2}(\R^n)=L_{\cl}^{m_1,m_2}=\Op{SG^{m_1,m_2}_{\cl}}$.
\end{Def}

\begin{rem}
	The definition could be extended in a natural way
	from operators acting between scalars to operators acting between (distributional sections of) 
	vector bundles: one should then use matrix-valued symbols whose entries satisfy the estimates \eqref{disSG}
	and modify accordingly the various statements below.
	To simplify the presentation, we omit everywhere any reference to vector bundles, assuming them to be trivial and one-dimensional. 
\end{rem}

\noindent
The next two results are especially useful when dealing with $SG$-classical symbols.

\begin{Thm}
	\label{thm4.6}
	Let $a_{k} \in SG_{\cl}^{m_1-k,m_2-k}$, $k =0,1,\dots$,
	be a sequence of $SG$-classical symbols
	and $a \sim \sum_{k=0}^\infty a_{k}$
	its asymptotic sum in the general $SG$-calculus.
	Then, $a \in SG_{\cl}^{m_1,m_2}$.
\end{Thm}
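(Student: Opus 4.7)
The statement splits naturally into the two conditions of Definition~\ref{def:sgclass-b}. I will describe the argument for the $\xi$-classicality (condition (i)); the $x$-classicality (condition (ii)) is entirely symmetric.

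Since each $a_k \in SG_{\cl}^{m_1-k,m_2-k}$, Definition~\ref{def:sgclass-b}(i) supplies $\xi$-homogeneous components $a_{k;m_1-k-i,\cdot}(x,\xi)$, $i=0,1,\dots$, such that $\omega(\xi)\,a_{k;m_1-k-i,\cdot}\in SG_{\cl(x)}^{m_1-k-i,\,m_2-k}$ and
\[
a_k-\sum_{i=0}^{N-1}\omega(\xi)\,a_{k;m_1-k-i,\cdot}\in SG^{m_1-k-N,\,m_2-k},\qquad N=1,2,\dots
\]
Reindexing via $j=k+i$, the component of $a_k$ that is $\xi$-homogeneous of order $m_1-j$ exists for all $j\ge k$; denote it $a_{k;m_1-j,\cdot}$. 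The candidate components for the asymptotic sum $a$ are then obtained by gathering all contributions of the same $\xi$-homogeneity degree:
\[
b_{m_1-j,\cdot}(x,\xi):=\sum_{k=0}^{j} a_{k;m_1-j,\cdot}(x,\xi).
\]
This is a \emph{finite} sum of functions, each homogeneous of degree $m_1-j$ in $\xi$, so it is homogeneous of degree $m_1-j$ in $\xi$.

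Next I would verify that $\omega(\xi)\,b_{m_1-j,\cdot}\in SG_{\cl(x)}^{m_1-j,\,m_2}$. For each fixed $k\le j$, $\omega(\xi)\,a_{k;m_1-j,\cdot}$ belongs to $SG_{\cl(x)}^{m_1-j,\,m_2-k}\subseteq SG_{\cl(x)}^{m_1-j,\,m_2}$, and finite sums of symbols $x$-classical of orders bounded by $m_2$ remain $x$-classical of order $m_2$ (one simply groups the $x$-homogeneous expansions, which are countable with integer-separated orders).

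The core computation is the remainder estimate. Writing
\[
a-\sum_{j=0}^{N-1}\omega(\xi)\,b_{m_1-j,\cdot}
=\Bigl(a-\sum_{k=0}^{N-1}a_k\Bigr)
+\sum_{k=0}^{N-1}\Bigl(a_k-\sum_{j=k}^{N-1}\omega(\xi)\,a_{k;m_1-j,\cdot}\Bigr),
\]
the first bracket lies in $SG^{m_1-N,\,m_2-N}\subseteq SG^{m_1-N,\,m_2}$ by the defining property of the asymptotic sum in the general $SG$-calculus, and the $k$-th summand of the second bracket lies in $SG^{m_1-N,\,m_2-k}\subseteq SG^{m_1-N,\,m_2}$ by classicality of $a_k$. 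The total lies in $SG^{m_1-N,\,m_2}$, which is exactly condition (i) of Definition~\ref{def:sgclass-b}.

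Applying the symmetric procedure to the $x$-homogeneous components of the $a_k$'s, one obtains the analogous components $b_{\cdot,m_2-k}$ for $a$, verifying condition (ii). The main technical nuisance—and the only point requiring care—is the double bookkeeping: the $\xi$-homogeneous term of $a$ at order $m_1-j$ receives contributions from $a_0,\dots,a_j$, each living in a different $x$-order stratum, and one must check that the mixed classicality condition $\omega(\xi)\,b_{m_1-j,\cdot}\in SG_{\cl(x)}^{m_1-j,m_2}$ survives this regrouping. Once this is in place, the result reduces to the standard asymptotic completeness of the $SG$-calculus.
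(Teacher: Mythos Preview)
The paper does not supply its own proof of this statement: Theorem~\ref{thm4.6} is quoted as a background result, with the reader referred to the literature (e.g., \cite{ES97,MSS06}) for details. Your argument is the standard one and is correct; the decomposition
\[
a-\sum_{j=0}^{N-1}\omega(\xi)\,b_{m_1-j,\cdot}
=\Bigl(a-\sum_{k=0}^{N-1}a_k\Bigr)
+\sum_{k=0}^{N-1}\Bigl(a_k-\sum_{j=k}^{N-1}\omega(\xi)\,a_{k;m_1-j,\cdot}\Bigr)
\]
is exactly the right organisation, and your remark that the only point needing care is the verification $\omega(\xi)\,b_{m_1-j,\cdot}\in SG_{\cl(x)}^{m_1-j,m_2}$ is well taken---this is where the integer step in the orders $(m_1-k,m_2-k)$ is used, so that the finitely many $x$-expansions of the summands $a_{k;m_1-j,\cdot}$ (with top $x$-orders $m_2,m_2-1,\dots,m_2-j$) interleave into a single $x$-classical expansion of top order $m_2$.
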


\begin{Thm}
	\label{thm4.6.1.1}
	Let $\mathbb{B}^n= \{ x \in \R^n: |x| \le 1 \}$  and let $\chi$ be a 
	diffeomorphism from the interior of $\mathbb{B}^n$ to $\R^{n}$ such that
	\[ 
	\chi(x) = \displaystyle \frac{x}{|x|(1-|x|)} \quad \mbox{for} \quad |x| > 2/3.
	\]
	Choosing a smooth function $[x]$ on $\R^n$ such that
	$1 - [x] \not= 0$ for all $x$ in the interior of $\mathbb{B}^n$
	and $|x| > 2/3 \Rightarrow [x] = |x|$,
	for any $a \in SG^{m_1,m_2}_{\cl}$ denote by $(D^\m a)(y, \eta)$,
	$\m=(m_1,m_2)$, the function
	\begin{equation}
		\label{eq4.26.1}
		b(y,\eta) = (1-[\eta])^{m_{1}} (1-[y])^{m_{2}}
		a(\chi(y), \chi(\eta)).
	\end{equation}
	Then, $D^\m$ extends to a homeomorphism from $SG^{m_1,m_2}_{\cl}$ to 
	$C^\infty(\mathbb{B}^n \times \mathbb{B}^n)$.
\end{Thm}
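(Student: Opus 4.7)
The plan is to analyze the pull-back $D^{\mathbf{m}}(a)$ on four overlapping regions of $\mathbb{B}^n \times \mathbb{B}^n$: the interior (where smoothness is automatic because $\chi$ is a diffeomorphism and $a \in C^\infty$), a neighborhood of $\partial\mathbb{B}^n \times \mathbb{B}^n_{\text{int}}$, a neighborhood of $\mathbb{B}^n_{\text{int}} \times \partial\mathbb{B}^n$, and finally a neighborhood of the corner $\partial\mathbb{B}^n \times \partial\mathbb{B}^n$. In each boundary region I would exploit the corresponding classical expansion from Definition \ref{def:sgclass-b}, and then verify that the inverse of $D^{\mathbf{m}}$ can be reconstructed from the Taylor expansions of $b \in C^\infty(\mathbb{B}^n \times \mathbb{B}^n)$ at the boundary faces.

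To prove that $D^{\mathbf{m}}(a) \in C^\infty(\mathbb{B}^n \times \mathbb{B}^n)$, I would fix the region $|y|,|\eta| > 2/3$ and use the key identity $|\chi(y)| = 1/(1-|y|)$, $\chi(y)/|\chi(y)| = y/|y|$, which is smooth up to $|y|=1$. Writing the expansion $a \sim \sum_{j} \omega(\xi)\, a_{m_1-j,\cdot}(x,\xi)$ in $\xi$ and using homogeneity, the $j$-th term becomes
\[
(1-|\eta|)^{m_1}(1-|y|)^{m_2}\, a_{m_1-j,\cdot}(\chi(y),\chi(\eta)) = (1-|\eta|)^{j}(1-|y|)^{m_2}\, a_{m_1-j,\cdot}\!\left(\chi(y),\tfrac{\eta}{|\eta|}\right),
\]
which vanishes to order $j$ at $|\eta|=1$. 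Summing and noting that the remainder $r_N \in SG^{m_1-N,m_2}$ satisfies estimates yielding $(1-|\eta|)^{N}$ after weighting, I can take $N$ arbitrarily large, obtaining $C^N$ regularity up to $|\eta|=1$; the analogous expansion in $x$ handles the face $|y|=1$. At the corner, the joint classicality embodied in Definition \ref{def:sgclass-b} (namely that $\omega(\xi) a_{m_1-j,\cdot} \in SG^{m_1-j,m_2}_{\cl(x)}$ and symmetrically) produces a further expansion in $x$ of each homogeneous piece, and hence smoothness up to $|y|=|\eta|=1$ simultaneously, after the weight is applied.

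For the inverse, given $b \in C^\infty(\mathbb{B}^n \times \mathbb{B}^n)$, I would set $a(x,\xi) = (1-[\chi^{-1}(\xi)])^{-m_1}(1-[\chi^{-1}(x)])^{-m_2}\, b(\chi^{-1}(x),\chi^{-1}(\xi))$ for $(x,\xi)$ outside a bounded set, glued via a cutoff to a smooth symbol. Expanding $b$ at $|\eta|=1$ as $b(y,\eta) \sim \sum_j (1-|\eta|)^j \widetilde b_j(y,\eta/|\eta|)$ and undoing the change of variables recovers the homogeneous components $a_{m_1-j,\cdot}$ in $\xi$; the symmetric expansion in $y$ recovers those in $x$, and joint smoothness at the corner provides the compatibility required by Definition \ref{def:sgclass-b}. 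The standard symbol estimates \eqref{disSG} follow from boundedness of all derivatives of $b$ on the compact set $\mathbb{B}^n \times \mathbb{B}^n$ together with the scaling $\langle\xi\rangle \sim (1-|\eta|)^{-1}$, $\langle x\rangle \sim (1-|y|)^{-1}$. Continuity in both directions then reduces to a direct comparison of the natural Fr\'echet seminorms, since $D^{\mathbf{m}}$ and its inverse are explicit.

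The main obstacle I expect is the behavior at the corner $\partial\mathbb{B}^n \times \partial\mathbb{B}^n$: one must show that the double Taylor expansion of $D^{\mathbf{m}}(a)$ there matches precisely the condition that $a$ is bi-classical (i.e., the cross compatibility between the $\xi$- and $x$-expansions in Definition \ref{def:sgclass-b}), and that remainders vanish at the required joint rates in both boundary defining functions $(1-|y|)$ and $(1-|\eta|)$. This is essentially a compactification picture: the two classical filtrations correspond to the two boundary hypersurfaces of $\mathbb{B}^n \times \mathbb{B}^n$, and the $SG$-classical condition is exactly what is needed for smoothness across the corner after the weight normalization \eqref{eq4.26.1}.
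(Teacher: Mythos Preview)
The paper does not supply its own proof of this theorem: it is quoted as a known structural result on $SG$-classical symbols, with the reader referred to \cite{ES97,MSS06} for details. So there is no in-paper argument to compare against.

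Your outline is essentially the standard proof and is correct in spirit. The decomposition into interior, two boundary faces, and corner is exactly right, and the key computation
\[
(1-|\eta|)^{m_1}\, a_{m_1-j,\cdot}(\chi(y),\chi(\eta)) = (1-|\eta|)^{j}\, a_{m_1-j,\cdot}\!\left(\chi(y),\tfrac{\eta}{|\eta|}\right)
\]
for $|\eta|>2/3$ is the heart of the matter: homogeneity converts the classical expansion of $a$ in $\xi$ into a Taylor expansion of $D^{\mathbf m}a$ at the face $|\eta|=1$, and the compatibility of the $x$- and $\xi$-expansions in Definition~\ref{def:sgclass-b} is precisely what gives smoothness at the corner. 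Your description of the inverse via Taylor expansion of $b$ at the boundary faces is also the standard route.

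Two places deserve more care if you write this out in full. First, when you say the remainder $r_N\in SG^{m_1-N,m_2}$ ``satisfies estimates yielding $(1-|\eta|)^N$ after weighting'', you must check not only the size of $(1-|\eta|)^{m_1}(1-|y|)^{m_2}r_N(\chi(y),\chi(\eta))$ but also that all its $(y,\eta)$-derivatives stay bounded up to the boundary; this uses the full family of seminorms in \eqref{disSG} together with the chain rule for $\chi$, and is where the $SG$ structure (decay in both $x$ and $\xi$) is genuinely used. Second, continuity of $D^{\mathbf m}$ and its inverse is not quite ``direct'': one typically invokes the closed graph theorem (both sides are Fr\'echet), or else tracks constants through the asymptotic-sum construction, which is somewhat tedious. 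Neither point is a gap in the strategy, only in the level of detail.
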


\noindent
Note that the definition of $SG$-classical symbol implies a condition of compatibility for the terms of the expansions with respect to $x$ and $\xi$. In fact, defining $\sigma_\psi^{m_1-j}$ and $\sigma_e^{m_2-i}$ on $SG_{\cl(\xi)}^{m_1,m_2}$ and $SG_{\cl(x)}^{m_1,m_2}$, respectively,  as
\begin{align*}
	\sigma_\psi^{m_1-j}(a)(x, \xi) &= a_{m_1-j, \cdot}(x, \xi),\quad j=0, 1, \ldots, 
	\\
	\sigma_e^{m_2-i}(a)(x, \xi) &= a_{\cdot, m_2-i}(x, \xi),\quad i=0, 1, \ldots,
\end{align*}
it possibile to prove that
\[
\begin{split}
a_{m_1-j,m_2-i}=\sigma_{\psi e}^{m_1-j,m_2-i}(a)=\sigma_\psi^{m_1-j}(\sigma_e^{m_2-i}(a))= \sigma_e^{m_2-i}(\sigma_\psi^{m_1-j}(a)), \\
j=0,1, \ldots, \; i=0,1, \ldots
\end{split}
\]
Moreover, the algebra property of $SG$-operators and Theorem \ref{thm4.6} imply that the composition of two $SG$-classical operators is still classical. 
For $A=\Op{a}\in L^{m_1,m_2}_\cl$ the triple $\sigma(A)=(\sigma_\psi(A),\sigma_e(A),\sigma_{\psi e}(A))=(a_{m_1,\cdot}\,,\,
a_{\cdot,m_2}\,,\, a_{m_1,m_2})$ is called the \textit{principal symbol of $A$}. This definition keeps the usual multiplicative behaviour, that is, for any $A\in L^{r_1,r_2}_\cl$, $B\in L^{s_1,s_2}_\cl$, $(r_1,r_2),(s_1,s_2)\in\R^2$,
$\sigma(AB)=\sigma(A)\,\sigma(B)$, with componentwise product in the right-hand side. We also set 
\begin{align*}
	\Symp{A}(x,\xi) = &\;\,\Symp{a}(x,\xi) =\\
	=
	&\;\, a_\m(x,\xi)=\omega(\xi) a_{m_1,\cdot}(x,\xi) +
	\omega(x)(a_{\cdot,m_2}(x,\xi) - \omega(\xi) a_{m_1,m_2}(x,\xi))
\end{align*}
for a $0$-excision function $\omega$. Theorem \ref{thm:ellclass} below allows to express the ellipticity
of $SG$-classical operators in terms of their principal symbol:
\begin{Thm}
	 \label{thm:ellclass}
	An operator $A\in L^{m_1,m_2}_\cl$ is elliptic if and only if each element of the triple $\sigma(A)$ is 
	non-vanishing on its domain of definition.
\end{Thm}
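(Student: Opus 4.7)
\emph{Necessity.} Assume $A=\Op{a}$ is elliptic, so that there exist $R, c > 0$ with $|a(x,\xi)| \geq c \langle\xi\rangle^{m_1} \langle x\rangle^{m_2}$ for $|x|+|\xi|\geq R$. The plan is to extract the non-vanishing of each of $\sigma_\psi(A)$, $\sigma_e(A)$, $\sigma_{\psi e}(A)$ by rescaling. For $\sigma_\psi(A)$, fix $(x_0, \xi_0)\in\R^n\times(\R^n\setminus\{0\})$; by Definition \ref{def:sgclass-b}(i) and homogeneity of $a_{m_1,\cdot}$, one has $t^{-m_1} a(x_0, t\xi_0) \to \sigma_\psi(A)(x_0,\xi_0)$ as $t \to +\infty$. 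Applying the ellipticity estimate at $(x_0, t\xi_0)$ for $t$ large yields $|\sigma_\psi(A)(x_0,\xi_0)| \geq c\, |\xi_0|^{m_1} \langle x_0\rangle^{m_2} > 0$. Symmetrically, rescaling in $x$ alone handles $\sigma_e(A)$, while the simultaneous rescaling $x=tx_0$, $\xi=s\xi_0$ with $|x_0|=|\xi_0|=1$ and $s,t\to+\infty$ produces $\sigma_{\psi e}(A)(x_0,\xi_0)\neq 0$.

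\emph{Sufficiency.} The cleanest route uses Theorem \ref{thm4.6.1.1}. Set $b = D^{\mathbf{m}}a \in C^\infty(\mathbb{B}^n \times \mathbb{B}^n)$, and examine the boundary values of $b$ on $\partial(\mathbb{B}^n \times \mathbb{B}^n)$, which decomposes into the three strata $\{|y|<1,\,|\eta|=1\}$, $\{|y|=1,\,|\eta|<1\}$ and the corner $\{|y|=|\eta|=1\}$. For $|y|>2/3$ one has $|\chi(y)|=(1-|y|)^{-1}=(1-[y])^{-1}$, so the weight $(1-[y])^{m_2}$ in \eqref{eq4.26.1} exactly compensates the $x$-growth of $a$; together with the remainder lying in $SG^{m_1,m_2-1}$, this shows that on $\{|y|=1,\,|\eta|<1\}$ the restriction of $b$ is, up to a nowhere-vanishing smooth factor, $\sigma_e(A)$ evaluated at $(y/|y|,\chi(\eta))$. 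The analogous computation identifies $b$ on the other two strata with $\sigma_\psi(A)$ and $\sigma_{\psi e}(A)$ respectively. Non-vanishing of $\sigma(A)$ therefore gives $b\neq 0$ on the compact set $\partial(\mathbb{B}^n\times\mathbb{B}^n)$; continuity propagates this bound to a neighborhood $U$ of the boundary. Pulling $U$ back through $\chi$ contains a set of the form $\{|x|+|\xi|\geq R\}$, and unwinding the weight $(1-[\eta])^{m_1}(1-[y])^{m_2}\asymp \langle\xi\rangle^{-m_1}\langle x\rangle^{-m_2}$ returns the ellipticity estimate $|a(x,\xi)|\geq c\langle\xi\rangle^{m_1}\langle x\rangle^{m_2}$ on that set.

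\emph{Main obstacle.} The delicate point is the corner $\{|y|=|\eta|=1\}$, where both variables tend to infinity simultaneously and two natural iterated limits must agree. This coherence is exactly the compatibility relation $a_{m_1,m_2}=\sigma_\psi^{m_1}(\sigma_e^{m_2}(a))=\sigma_e^{m_2}(\sigma_\psi^{m_1}(a))$ recorded after Definition \ref{def:sgclass-b}: without it, $b$ would not extend continuously up to the corner, and the compactness argument would collapse. Once this matching is accounted for, the remaining work is a routine continuity–compactness argument, and the non-vanishing of the principal symbol triple translates cleanly into $SG$-ellipticity.
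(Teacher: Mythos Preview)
The paper does not supply its own proof of this theorem: it is stated in the preliminary recollection at the start of Section~\ref{sec:zpower}, with the blanket reference ``see, e.g., \cite{ES97,MSS06} for additional details and proofs''. So there is no in-paper argument to compare against.

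Your proof plan is correct. The necessity direction by rescaling is standard and your sketch is accurate; the one point you might sharpen is the corner symbol, where the cleanest route is iterated rather than simultaneous rescaling: first let $|\xi|\to\infty$ to obtain the bound $|a_{m_1,\cdot}(x,\xi)|\ge c\,|\xi|^{m_1}\langle x\rangle^{m_2}$ valid for \emph{all} $x$ and $\xi\neq 0$, then rescale that in $x$. This sidesteps any uniformity issue in the joint limit.

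Your sufficiency argument via the compactification $D^{\mathbf m}\colon SG^{m_1,m_2}_{\cl}\to C^\infty(\mathbb{B}^n\times\mathbb{B}^n)$ of Theorem~\ref{thm4.6.1.1} is both correct and very much in the spirit of the paper, which later exploits the same isomorphism in the proof of Theorem~\ref{sgrezneg}. The identification of $b|_{\partial(\mathbb{B}^n\times\mathbb{B}^n)}$ with the three components of $\sigma(A)$ (up to the nowhere-vanishing weight $(1-[\eta])^{m_1}$ or $(1-[y])^{m_2}$ on the open faces) is exactly right, and your observation that the compatibility relation $\sigma_\psi^{m_1}\circ\sigma_e^{m_2}=\sigma_e^{m_2}\circ\sigma_\psi^{m_1}$ is what guarantees the continuous extension of $b$ to the corner is the genuine content of the proof. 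Once $b$ is non-vanishing on the compact boundary, the remainder is indeed routine compactness.
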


Next, in order to outline the construction of the complex powers $A^z$, $z\in\C$, of an elliptic $SG$-classical operator, we need to recall some results about the parametrix of $A-\lambda I$, where $\lambda$  belongs to a sector $\Lambda$ in the complex plane. It is well-known (see \cite{GS95}) that, unless one restricts to differential operators, it is not possible to follow the idea of \cite{SH87}, that is, define parameter-dependent symbols and analyze the resolvent as a particular case of parameter-dependent operator. We follow the paper \cite{MSS06} closely, recalling facts concerning the spectrum of elliptic $SG$-operators and the concept of $\Lambda$-elliptic symbol. For $m_1,m_2>0$, an elliptic
$A\in L^{m_1,m_2}$ is considered as an unbounded operator 
\[
	A\colon H^{m_1,m_2}\subset L^2\to L^2,
\]
which turns out to be closed.

\begin{Thm}
\label{disspe}
Given an elliptic operator $A \in L^{m_1,m_2}$ with $m_1, m_2>0$,
only one of the following properties holds:
\begin{itemize}
\label{dispe}
\item[i)] the spectrum of $A$ is the whole complex plane $\C$;
\item[ii)] the spectrum of $A$ is a countable set, without any limit point.
\end{itemize}
\end{Thm}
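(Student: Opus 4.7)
The plan is to deduce the dichotomy by combining the $SG$-Fredholm theory already recalled in the paper with the spectral theory of compact operators. First I would observe that for every $\lambda\in\C$ the operator $A-\lambda I$ lies again in $L^{m_1,m_2}$ and is still $SG$-elliptic: since $m_1,m_2>0$, the symbol $a(x,\xi)-\lambda$ has the same principal part $(\sigma_\psi(A),\sigma_e(A),\sigma_{\psi e}(A))$ as $A$ (the constant $\lambda$ lives in $SG^{0,0}\subset SG^{m_1,m_2}$ as a strictly lower-order perturbation), and by Theorem \ref{thm:ellclass} none of these three components vanishes, so $A-\lambda I$ is elliptic. Consequently it admits a parametrix in $L^{-m_1,-m_2}$ modulo $L^{-\infty}$, and as an unbounded operator $A-\lambda I\colon H^{m_1,m_2}\subset L^2\to L^2$ it is Fredholm of index zero for every $\lambda$.

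Next I would split on whether the resolvent set $\rho(A)$ is empty. If $\rho(A)=\emptyset$, we are in case (i), since by definition $\sigma(A)=\C\setminus\rho(A)=\C$. So suppose instead that there exists $\lambda_0\in\rho(A)$. Then $(A-\lambda_0 I)^{-1}\colon L^2\to L^2$ exists as a bounded operator, and in fact factors through the continuous inclusion $H^{m_1,m_2}\hookrightarrow L^2$. Because $m_1,m_2>0$, this embedding is compact (as recalled in the introduction, the embeddings $H^{s_1,s_2}\hookrightarrow H^{r_1,r_2}$ are compact when both inequalities are strict), so $(A-\lambda_0 I)^{-1}$ is a compact operator on $L^2$.

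At this stage I would invoke the Riesz–Schauder theory for compact operators: the spectrum of the compact operator $R:=(A-\lambda_0 I)^{-1}$ is at most countable, consists of eigenvalues of finite multiplicity, and the only possible accumulation point is $0$. Pulling this back through the spectral-mapping correspondence $\mu\in\sigma(A)\setminus\{\lambda_0\}\iff (\mu-\lambda_0)^{-1}\in\sigma(R)\setminus\{0\}$, one sees that $\sigma(A)$ is countable and its only possible accumulation in the Riemann sphere is at $\infty$, i.e.\ $\sigma(A)$ has no limit point in $\C$. This is exactly case (ii), so the dichotomy is established.

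The only mildly technical step is the compactness of the resolvent, which relies on two ingredients already in the paper: the continuity $A-\lambda_0 I\colon H^{m_1,m_2}\to L^2$ with bounded inverse thanks to $SG$-ellipticity (so that the inverse really does land in $H^{m_1,m_2}$), and the compactness of $H^{m_1,m_2}\hookrightarrow L^2$ for $m_1,m_2>0$. Once these are in place, the rest is a verbatim application of the spectral theory of compact operators; no subtleties from the double-order $SG$-calculus enter at this level. I expect this compactness step, and checking that $\lambda$ does not spoil $SG$-ellipticity as $|\lambda|$ varies, to be the principal point worth verifying carefully.
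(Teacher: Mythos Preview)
Your proof is correct and follows essentially the same route as the paper: both arguments hinge on the compactness of the resolvent (the paper normalizes to $\lambda_0=0$ and works with $A^{-1}$, you keep a general $\lambda_0$) via the compact embedding $H^{m_1,m_2}\hookrightarrow L^2$, and then apply the spectral theory of compact operators together with the map $\mu\mapsto(\mu-\lambda_0)^{-1}$. Your opening paragraph on the Fredholm property of $A-\lambda I$ is not actually used in what follows, and the reference to Theorem~\ref{thm:ellclass} is slightly off since that result is for $SG$-\emph{classical} operators while the present statement concerns general $A\in L^{m_1,m_2}$; the ellipticity of $A-\lambda I$ follows directly from the symbol-level definition instead.
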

\begin{proof}
If i) does not hold, there exists $\mu \in \C$ such that $(A- \mu I)$ is invertible. Without loss of generality, we can assume $\mu=0$, so that
\[
(A-\lambda I)= A(I-\lambda A^{-1}),
\]
showing that $(A-\lambda I)$ is not invertible if and only if $\lambda\not=0$ and $\frac{1}{\lambda}$ belongs to the spectrum of 
$A^{-1}$. From the properties of elliptic operators, we have that $A^{-1} \in L^{-m_1,-m_2}$. Moreover, in view of the hypothesis $m_1,m_2>0$, of the continuity of $A^{-1}$ from $L^2 \equiv H^{0,0}$ to $H^{m_1,m_2}$ and of the compact embeddings between the weighted Sobolev spaces stated above, $A^{-1}\colon L^2\to H^{m_1,m_2}\hookrightarrow L^2$ is a compact operator, and, as such, it has a countable spectrum with, at most, the origin as a limit point.
\end{proof}
\begin{rem}
The proof of Theorem \ref{disspe} also shows that the eigenfunctions of $A$ are the same of $A^{-1}$.
\end{rem}

\noindent
For fixed $\theta_0,\theta$, let $\Lambda=\{z \in \C \colon \theta_0-\theta \leq \textnormal{arg}(z) \leq \theta_0+\theta\}$ be a closed sector of the complex plane with vertex at the origin. We now recall the definition of ellipticity with respect to $\Lambda$:
\begin{Def}
Let $\Lambda$ be a closed sector of the complex plane with vertex at the origin. A symbol $a(x, \xi) \in SG^{m_1,m_2}$ and the corresponding operator
$A=\Op{a}$ are called $\Lambda$-elliptic if there exist constants $C,R > 0$ such that
\begin{itemize}
\item[i)] $a(x,\xi)-\lambda \not=0$, for any $\lambda \in \Lambda$ and $(x,\xi)$ satisfying $|x|+|\xi|\geq R$;
\item[ii)] $|(a(x,\xi)-\lambda)^{-1}|\leq C  \langle \xi \rangle^{-m_1} \langle x\rangle^{-m_2}$ for any $\lambda \in \Lambda$
 and $(x,\xi)$ satisfying $|x|+|\xi|\geq R$. 
\end{itemize}
\end{Def}
\begin{rem}
\label{matrixvalu}
When matrix-valued symbols are involved, condition i) above is modified, asking that the spectrum of the matrix $a(x,\xi)$ does not
intersect the sector $\Lambda$ for $|x|+|\xi|\geq R$.
\end{rem}
\noindent To define the complex powers of an elliptic operator $A$, we need that the resolvent $(A-\lambda I)^{-1}$ exists, at least, for $|\lambda|$ big enough. The following Theorem \ref{sect} shows that this is always the case when $m_1,m_2>0$ and that the resolvent can be
well approximated by a parametrix of $A-\lambda I$.

\begin{Thm}
\label{sect}
Let $m_1,m_2>0$ and $A \in L^{m_1,m_2}$ be $\Lambda$-elliptic. Then,
there exists a constant $L$ such that the resolvent set $\rho(A)$ includes $\Lambda_L=\{\lambda \in \Lambda \colon |\lambda|>L\}$. Moreover, for suitable constants $C, C^\prime>0$, we have that
\[
\|(A-\lambda I)^{-1}\|_{\mathcal{L}(L^2)}\leq \frac{C}{|\lambda|}
\]
and
\[
\|(A-\lambda I)^{-1}-B(\lambda)\|_{\mathcal{L}(L^2)}\leq \frac{C'}{|\lambda|^2}
\]
where $B(\lambda)$ is a parametrix of $A-\lambda I$.
\end{Thm}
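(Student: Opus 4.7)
The plan is to construct a parametrix $B(\lambda)$ of $A-\lambda I$ within the $SG$-calculus, with quantitative control on its $\lambda$-dependence, and then to invert $A-\lambda I$ by a Neumann argument for $|\lambda|$ large. Since $m_1,m_2>0$, the term $\lambda I$ is of strictly lower $SG$-order than $A$, so the principal symbol of $A-\lambda I$ is unchanged; the issue is not ellipticity at fixed $\lambda$ but uniform bounds as $|\lambda|\to\infty$.

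The starting point is the $\Lambda$-ellipticity hypothesis, which provides the pointwise inverse $b_0(x,\xi;\lambda)=(a(x,\xi)-\lambda)^{-1}$ for $|x|+|\xi|\ge R$, uniformly in $\lambda\in\Lambda$. Multiplying by an excision function and inducting on derivatives of the identity $b_0\cdot(a-\lambda)=1$, one obtains uniform $SG^{-m_1,-m_2}$ estimates on $b_0(\cdot;\lambda)$, sharpened by a factor of the form $(1+|\lambda|\csi^{-m_1}\x^{-m_2})^{-1}$ coming from the $\Lambda$-ellipticity bound. A Calder\'on--Vaillancourt-type result in the $SG$-setting then yields the key operator-norm bound $\|\Op{b_0(\cdot;\lambda)}\|_{\mathcal{L}(L^2)}=O(|\lambda|^{-1})$.

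Computing $\Op{b_0(\cdot;\lambda)}\circ(A-\lambda I)=I+\Op{r_1(\cdot;\lambda)}$ via the Leibniz formula, each term in $r_1$ carries at least one derivative of $b_0$, hence has strictly improved $SG$-order and, by the sharp estimate above, an additional factor of $|\lambda|^{-1}$, so that $\|\Op{r_1(\cdot;\lambda)}\|_{\mathcal{L}(L^2)}=O(|\lambda|^{-1})$. Iterating the correction scheme $b_{k+1}:=-b_0\# r_{k+1}$ and taking an asymptotic sum $b(\cdot;\lambda)\sim\sum_{k\ge 0}b_k(\cdot;\lambda)$ (whose existence is guaranteed in the classical calculus by Theorem \ref{thm4.6}), one produces $B(\lambda)\in L^{-m_1,-m_2}$ with
\[
B(\lambda)(A-\lambda I)=I+R(\lambda),\qquad (A-\lambda I)B(\lambda)=I+\widetilde R(\lambda),
\]
where $R(\lambda),\widetilde R(\lambda)\in L^{-\infty}$ and their $\mathcal{L}(L^2)$-norms tend to $0$ as $|\lambda|\to\infty$. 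Choose $L$ so that $\|R(\lambda)\|,\|\widetilde R(\lambda)\|\le 1/2$ for $\lambda\in\Lambda_L$; then $I+R(\lambda)$ is invertible by Neumann series, $A-\lambda I$ is boundedly invertible on $L^2$ with $(A-\lambda I)^{-1}=(I+R(\lambda))^{-1}B(\lambda)$, and $\Lambda_L\subset\rho(A)$.

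The two remaining estimates follow at once: $\|(A-\lambda I)^{-1}\|\le 2\|B(\lambda)\|\le C/|\lambda|$, while
\[
(A-\lambda I)^{-1}-B(\lambda)=-(I+R(\lambda))^{-1}\,R(\lambda)\,B(\lambda)
\]
is a product of two $O(|\lambda|^{-1})$ factors, which yields the claimed $O(|\lambda|^{-2})$ bound. The main obstacle is the quantitative tracking of the $\lambda$-dependence through the symbol estimates, in particular verifying the sharpened bound on $b_0$ and the fact that each iteration of the parametrix construction genuinely gains a factor of $|\lambda|^{-1}$ in the remainder norm. As stressed above, a clean parameter-dependent $SG$-calculus \`a la \cite{SH87} is not available here, so these sharp estimates must be obtained by a direct region-by-region analysis, distinguishing the zones $\csi^{m_1}\x^{m_2}\lesssim|\lambda|$ and $\csi^{m_1}\x^{m_2}\gtrsim|\lambda|$.
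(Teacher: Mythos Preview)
The paper itself does not prove this theorem: it is stated without proof as one of the facts recalled from \cite{MSS06}, to which the authors refer for the construction and estimates of the parametrix of $A-\lambda I$ and of the resolvent. There is therefore no ``paper's own proof'' to compare with.

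That said, your outline is essentially the argument carried out in \cite{MSS06}: one builds the parameter-dependent parametrix $B(\lambda)$ starting from $b_0(x,\xi;\lambda)=(a(x,\xi)-\lambda)^{-1}$, exploits the sharp pointwise bound $|(a(x,\xi)-\lambda)^{-1}|\le C(|\lambda|+\csi^{m_1}\x^{m_2})^{-1}$ (stated in the paper as Lemma~\ref{elllemma}, immediately after Theorem~\ref{sect}), and then runs a Neumann-series inversion for $|\lambda|$ large. The gain of a factor $|\lambda|^{-1}$ at each step of the iteration and the resulting $O(|\lambda|^{-2})$ bound on $(A-\lambda I)^{-1}-B(\lambda)$ are exactly the points where the work lies, as you note; in \cite{MSS06} this is handled by introducing auxiliary parameter-dependent symbol classes that encode the weight $(|\lambda|+\csi^{m_1}\x^{m_2})^{-1}$, which is a cleaner bookkeeping device than the region-by-region analysis you propose, but the underlying estimates are the same. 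Your sketch is correct and aligned with the cited reference.
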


\noindent
The next two results give estimates for the position of the eigenvalues of a $\Lambda$-elliptic operator in the complex plane and the relation between 
$\Lambda$-ellipticity and the principal symbol of a classical $SG$-operator, similarly to Theorem \ref{thm:ellclass}.

\begin{Lem}
\label{elllemma}
If $A=\Op{a}\in L^{m_1,m_2}$, $m_1,m_2>0$, is $\Lambda$-elliptic, there is a constant $c_0\geq 1$ such that, for every $(x,\xi) \in \R^n\times \R^n$, the spectrum of $a(x,\xi)$ is
included in the set
\[
	\Omega_{\x, \csi}=\left\{z \in \C\setminus \Lambda \colon \frac{1}{c_0}\csi^{m_1}\x^{m_2}\leq |z|\leq c_0 \csi^{m_1}\x^{m_2}\right\}
\]
and
\[
|(\lambda- a(x,\xi))^{-1}|\leq C(|\lambda| +\csi^{m_1}\x^{m_2} )^{-1}, \quad \forall (x,\xi) \in \R^n \times \R^n, \lambda \in \C\setminus \Omega_{\x, \csi}.
\]
\end{Lem}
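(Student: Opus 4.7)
The plan is to establish the two claims of the lemma separately: first the localization of $a(x,\xi)$ in the annular region $\Omega_{\x,\csi}$, and then the resolvent bound, via a case analysis on the position of $\lambda$ relative to $\Omega_{\x,\csi}$. Throughout I exploit that the symbol is scalar, so the ``spectrum of $a(x,\xi)$'' is the single point $a(x,\xi)$, and that $\Lambda$, being a closed sector with vertex at the origin, contains $0$.

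For the localization, the upper bound $|a(x,\xi)| \leq M \csi^{m_1}\x^{m_2}$ follows at once from \eqref{disSG} applied with $\alpha=\beta=0$. For the lower bound and the non-intersection with $\Lambda$, I would apply $\Lambda$-ellipticity at the single value $\lambda=0$: condition i) gives $a(x,\xi) \notin \Lambda$ for $|x|+|\xi| \geq R$, while condition ii) with $\lambda=0$ yields $|a(x,\xi)^{-1}| \leq C\csi^{-m_1}\x^{-m_2}$, whence $|a(x,\xi)| \geq m \csi^{m_1}\x^{m_2}$ with $m = C^{-1}$. Taking $c_0 \geq 2\max(M, m^{-1})$ gives $a(x,\xi) \in \Omega_{\x,\csi}$ with strict margin on the region $|x|+|\xi| \geq R$; on the compact complement one may enlarge $c_0$ further, or equivalently replace $a$ modulo $SG^{-\infty}$ by a symbol that is $\Lambda$-elliptic everywhere, to absorb this case.

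For the resolvent bound, I would partition $\lambda \in \C \setminus \Omega_{\x,\csi}$ into three regions, chosen to match the margin built into $c_0$. If $\lambda \in \Lambda$ and $|\lambda| \leq c_0 \csi^{m_1}\x^{m_2}$, then condition ii) of $\Lambda$-ellipticity gives $|(a(x,\xi)-\lambda)^{-1}| \leq C\csi^{-m_1}\x^{-m_2}$, and the comparison $|\lambda|+\csi^{m_1}\x^{m_2} \leq (c_0+1)\csi^{m_1}\x^{m_2}$ yields the stated estimate. If $\lambda \in \C\setminus\Lambda$ with $|\lambda| < \frac{1}{c_0}\csi^{m_1}\x^{m_2}$, then the triangle inequality gives $|\lambda - a(x,\xi)| \geq |a(x,\xi)| - |\lambda| \geq (m-c_0^{-1})\csi^{m_1}\x^{m_2} \geq (m/2)\csi^{m_1}\x^{m_2}$ by the margin in $c_0$, and the conclusion follows as in the first case. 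Finally, if $|\lambda| > c_0 \csi^{m_1}\x^{m_2}$, then $|\lambda| \geq 2|a(x,\xi)|$, so that $|\lambda - a(x,\xi)| \geq |\lambda|/2$; combined with $|\lambda|+\csi^{m_1}\x^{m_2} \leq (1+c_0^{-1})|\lambda|$, this yields the required bound.

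The principal technical hurdle is calibrating $c_0$ so that the three regions exhaust $\C \setminus \Omega_{\x,\csi}$ with matching constants, in particular so that the separation between the annulus $\{|z| \sim \csi^{m_1}\x^{m_2}\}$ and the point $a(x,\xi)$ remains uniform in $(x,\xi)$. A secondary subtlety is that the $\Lambda$-ellipticity hypotheses only apply for $|x|+|\xi| \geq R$; the extension of the conclusions to the compact complementary set follows from continuity of $a$ and of the resolvent, or is bypassed by modifying $a$ by a smoothing term without altering the calculus.
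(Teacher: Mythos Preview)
The paper does not supply a proof of this lemma: it is stated without argument as background material imported from \cite{MSS06}, alongside Theorem~\ref{sect} and Proposition~\ref{prop:lambdaell}. Your argument is therefore being assessed on its own merits rather than against anything in the paper, and on that score it is essentially correct. The three-region split of $\C\setminus\Omega_{\x,\csi}$ into $\Lambda\cap\{|\lambda|\le c_0\csi^{m_1}\x^{m_2}\}$, the inner disc $\{|\lambda|<c_0^{-1}\csi^{m_1}\x^{m_2}\}\setminus\Lambda$, and the exterior $\{|\lambda|>c_0\csi^{m_1}\x^{m_2}\}$ is the standard route, the regions do exhaust the complement of $\Omega_{\x,\csi}$, and your calibration $c_0\ge 2\max(M,m^{-1})$ gives the margins needed in Cases~2 and~3.

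One point to tighten: of the two fixes you offer for the compact region $\{|x|+|\xi|<R\}$, only the second is guaranteed to work. Merely enlarging $c_0$ can fail, since the hypotheses do not rule out $a(x,\xi)\in\Lambda$, or even $a(x,\xi)=0$, at some point of this bounded set; in either event $a(x,\xi)\notin\Omega_{\x,\csi}$ regardless of $c_0$, because $\Omega_{\x,\csi}\subset\C\setminus\Lambda$ and excludes $0$. The correct maneuver is indeed your alternative: modify $a$ on a compact set by an element of $SG^{-\infty}=\SX(\R^{2n})$ to make it globally $\Lambda$-elliptic, which changes neither the operator class nor any of the subsequent constructions. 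This is the device used in \cite{MSS06}, and it is worth stating it as the actual argument rather than as a fallback.
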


\begin{Prop}
\label{prop:lambdaell}
For $a \in SG_{\cl}^{m_1,m_2}$, the $\Lambda$-ellipticity property is equivalent to
\[
\begin{split}
a_{m_1,\cdot}(x,\omega)-\lambda & \not=0,\, \mbox{for all } x \in \R^n,\, \omega \in \mathbb{S}^{n-1}, \,\lambda \in \Lambda,\\
a_{\cdot, m_2}(\omega',\xi)-\lambda &\not= 0, \, \mbox{for all } \xi \in \R^n, \,\omega' \in \mathbb{S}^{n-1}, \,\lambda \in \Lambda,\\
a_{m_1, m_2}(\omega', \omega)-\lambda &\not=0, \, \mbox{for all } \omega \in \mathbb{S}^{n-1},  \,\omega'\in \mathbb{S}^{n-1}, \,\lambda \in \Lambda,
\end{split}
\]
where $\mathbb{S}^{n-1}=\{u \in \R^n \colon |u|=1\}$.
\end{Prop}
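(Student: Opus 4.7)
My plan is to prove the two implications separately by a rescaling-and-compactness argument that exchanges the continuous parameter $\lambda\in\Lambda$ for the homogeneity of the principal parts of $a$ and the conic structure of $\Lambda$.

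For the implication $(\Rightarrow)$, fix $\lambda_0\in\Lambda$, $x\in\R^n$ and $\omega\in\mathbb{S}^{n-1}$. Since $\Lambda$ is a closed sector with vertex at the origin, $t^{m_1}\lambda_0\in\Lambda$ for every $t>0$; applying the $\Lambda$-ellipticity estimate at $(x,t\omega)$ with parameter $t^{m_1}\lambda_0$ gives, for $t$ sufficiently large,
\[
|a(x,t\omega)-t^{m_1}\lambda_0|\geq C^{-1}\langle t\omega\rangle^{m_1}\langle x\rangle^{m_2}.
\]
Dividing by $t^{m_1}$ and passing to the limit $t\to\infty$, the classical expansion yields $t^{-m_1}a(x,t\omega)\to a_{m_1,\cdot}(x,\omega)$ and $t^{-m_1}\langle t\omega\rangle^{m_1}\to 1$, so that $|a_{m_1,\cdot}(x,\omega)-\lambda_0|\geq C^{-1}\langle x\rangle^{m_2}>0$; this is the first condition. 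The remaining two follow from the same device applied to the scalings $x=s\omega'$ with $\lambda=s^{m_2}\lambda_0$, and jointly $(x,\xi)=(s\omega',t\omega)$ with $\lambda=s^{m_2}t^{m_1}\lambda_0$, all of which remain in $\Lambda$ by conicity.

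For the converse $(\Leftarrow)$, I build the $\Lambda$-ellipticity estimate in three cascading steps. \emph{Step 1.} The set $K=\{a_{m_1,m_2}(\omega',\omega):\omega,\omega'\in\mathbb{S}^{n-1}\}$ is compact in $\C$ and, by the third hypothesis, disjoint from $\Lambda$. Splitting $\Lambda$ into the bounded piece $\Lambda\cap\overline{B}_M$ (where compactness forces a positive distance from $K$) and its complement $\Lambda\setminus\overline{B}_M$ with $M=2\max_{k\in K}|k|$ (where trivially $|k-\mu|\geq|\mu|/2$), one obtains a uniform lower bound $|a_{m_1,m_2}(\omega',\omega)-\mu|\geq c_3>0$ for every $\omega,\omega'\in\mathbb{S}^{n-1}$ and $\mu\in\Lambda$. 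The substitution $\mu=|x|^{-m_2}|\xi|^{-m_1}\lambda\in\Lambda$ together with bihomogeneity rescales this to $|a_{m_1,m_2}(x,\xi)-\lambda|\geq c_3|x|^{m_2}|\xi|^{m_1}$ on $(\R^n\setminus\{0\})^2$. \emph{Step 2.} Since $a_{m_1,\cdot}(\,\cdot\,,\omega)$ is classical in $x$ with leading term $a_{m_1,m_2}(\,\cdot\,,\omega)$ and $SG^{m_1,m_2-1}$-remainder, Step 1 absorbs the remainder once $|x|$ is large, while the same split-$\Lambda$ trick applied to the compact set $\{|x|\leq L_0\}\times\mathbb{S}^{n-1}$ (using the first hypothesis) handles bounded $x$; together they give $|a_{m_1,\cdot}(x,\omega)-\lambda|\geq c_1\langle x\rangle^{m_2}$ uniformly, and symmetrically $|a_{\cdot,m_2}(\omega',\xi)-\lambda|\geq c_2\langle\xi\rangle^{m_1}$.

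\emph{Step 3.} For $|\xi|\geq R$ and arbitrary $x$, the classical expansion $a-\omega(\xi)a_{m_1,\cdot}\in SG^{m_1-1,m_2}$ together with Step 2 (applied at $\mu=|\xi|^{-m_1}\lambda\in\Lambda$) produces
\[
|a(x,\xi)-\lambda|\geq c_1|\xi|^{m_1}\langle x\rangle^{m_2}-C\langle\xi\rangle^{m_1-1}\langle x\rangle^{m_2},
\]
which is $\geq(c_1/2)\langle\xi\rangle^{m_1}\langle x\rangle^{m_2}$ once $|\xi|$ is large enough to absorb the remainder. The symmetric argument using $a-\omega(x)a_{\cdot,m_2}\in SG^{m_1,m_2-1}$ covers the regime of large $|x|$; since $|x|+|\xi|\geq R$ forces $\max(|x|,|\xi|)\geq R/2$, the required $\Lambda$-ellipticity estimate holds. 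The main technical obstacle throughout the converse is the non-compactness of $\Lambda$: uniformity in $\lambda$ cannot be extracted by a direct compactness argument. The recurrent device that unlocks each step is to absorb the unboundedness of $\lambda$ into the homogeneous weight $|x|^{m_2}|\xi|^{m_1}$ (or one of its one-variable analogues), reducing every estimate to a pairing of a rescaled bounded $\mu\in\Lambda$ against a compactly-parametrized value of a homogeneous principal symbol.
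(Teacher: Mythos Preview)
The paper does not supply its own proof of this proposition: it is stated as a known characterisation (the surrounding material is taken over from \cite{MSS06}, where such results are established), so there is no argument in the text to compare yours against.

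Your proof is correct and follows the standard route for such statements. A couple of minor remarks. In the forward direction your rescaling $\lambda=t^{m_1}\lambda_0$ (respectively $s^{m_2}\lambda_0$, $s^{m_2}t^{m_1}\lambda_0$) tacitly uses that $\Lambda$ is a cone and that the limit $t^{-m_1}a(x,t\omega)\to a_{m_1,\cdot}(x,\omega)$ is locally uniform in $x$; both are immediate from the definitions, but the argument as written is cleanest when $m_1,m_2>0$, which is in any case the only situation in which the proposition is used in the paper (cf.\ Assumptions~\ref{assu}). In the converse, your three-step cascade---first the bihomogeneous corner symbol on $\mathbb{S}^{n-1}\times\mathbb{S}^{n-1}$, then the two single-variable principal symbols, then the full symbol---together with the ``split $\Lambda$ into a bounded ball and its complement'' device to manufacture uniformity in $\lambda$, is exactly the mechanism one expects, and each absorption of a lower-order remainder is justified by the $SG$-estimates you quote. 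The only cosmetic point is that in Step~3 you invoke Step~2 ``applied at $\mu=|\xi|^{-m_1}\lambda$'': what you actually use is the homogeneity of $a_{m_1,\cdot}$ in $\xi$ to rescale the conclusion of Step~2 from $\omega\in\mathbb{S}^{n-1}$ to arbitrary $\xi\neq0$, which is of course what you intend.
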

\begin{rem}
If $a$ is matrix-valued, the conditions in Proposition \ref{prop:lambdaell} have to be expressed in terms of the spectra of the three involved matrices, analogously to Remark \ref{matrixvalu}.
\end{rem}
We can now give the definition of $A^z$, $z\in\C$. The following assumptions on $A$ are natural:
\renewcommand{\theAss}{A\arabic{Ass}}
\begin{Ass}
\label{assu}
\begin{enumerate}
\item $A \in L^{m_1, m_2}(\R^n)$, with $m_1$ and $m_2$ positive integers;
\item $A$ is $\Lambda$-elliptic with respect to a closed sector $\Lambda$ of the complex plane with vertex at the origin;
\item $A$ is invertible as an operator from $L^2(\R^n)$ to itself;
\item The spectrum of $A$ does not intersect the real interval $(-\infty, 0)$;
\item $A$ is $SG$-classical.
\end{enumerate}
\end{Ass}

\noindent
Theorem \ref{dispe} implies that, if $A$ satisfies Assumptions \ref{assu}, it has a discrete spectrum. In view of this, it is possible to find $\theta\in(0,\pi)$ so that $(A- \lambda)^{-1}$ exists for all $\lambda \in  \Lambda=\Lambda(\theta)= \{z \in \C \colon \pi-\theta\leq \arg (z)\leq \pi+\theta \}$.
\begin{Def}
Let $A$ be a $SG$-operator that satisfies Assumptions \ref{assu}. Let us define $A_z$, $z\in\C$, $\Re(z)<0$, as 
\begin{equation}
\label{Az}
A_z= \frac{1}{2 \pi i} \int_\Gamma \lambda^z (A-\lambda I)^{-1} d\lambda,
\end{equation} 
where $\Gamma=\partial^-(\Lambda \cup \{z\in\C \colon |z|\le \delta\})$, $\delta>0$ suitably small, is the path in the figure below: 
\begin{center}
	\begin{tikzpicture}[xscale=1,yscale=1]
		\draw[->] (1,4)--(9,4);
		\draw[->] (6,1)--(6,7);
		\filldraw[nearly transparent, color=black!90!white] 
		(1,6.5) -- (6,4) -- (1,1.5);
		\begin{scope}[thick, decoration={markings,
    			mark=at position 0.2 with {\arrow{>}}}]	
			\draw[postaction={decorate}] (1,6.5) -- ++(-26.5:4.576) arc(150:-154.2:1) -- (1,1.5);
		\end{scope}
		\begin{scope}[thick, decoration={markings,
    			mark=at position 0.52 with {\arrow{>}}}]	
			\draw[postaction={decorate}] (1,6.5) -- ++(-26.5:4.576) arc(150:-154.2:1) -- (1,1.5);
		\end{scope}
		\begin{scope}[thick, decoration={markings,
    			mark=at position 0.9 with {\arrow{>}}}]	
			\draw[postaction={decorate}] (1,6.5) -- ++(-26.5:4.576) arc(150:-154.2:1) -- (1,1.5);
		\end{scope}
		\draw (6.7,3.25) node [right] {$\Gamma$};
		\draw (2,3.5) node [right] {$\Lambda$};
		\draw (9.05,4.3) node [left] {$\mathrm{Re}(z)$};
		\draw (6.05,6.7) node [left] {$\mathrm{Im}(z)$};
		\draw[dashed] (6,4)-- ++(25:1);
		\draw[dashed,->] (4.5,4) arc(180:153:1.5);
		\draw (6.7,4.45) node [left] {$\delta$};
		\draw (4.6,4.4) node [left] {$\theta$};
	\end{tikzpicture}

\end{center}
\end{Def}

\noindent
The operator $A_z$, $\Re(z)<0$, is well defined since, from Theorem \ref{sect}, $\|(A-\lambda I)^{-1}\|_{\mathcal{L}(L^2)}\leq \frac{C}{|\lambda|}$ and this gives the absolute convergence of the integral. The definition can be extended to arbitrary $z\in\C$:

\begin{Def}
\label{allz}
Let $A$ be a $SG$-operator that satisfies Assumptions \ref{assu}. Define
\[ 
A^z=
\begin{cases}
	A_z & \mbox{for $\Re(z)<0$}
	\\
	A_{z-l} A^l& \mbox{for $\Re(z)\ge0$, with $l = 1, 2,  \dots,\; \Re(z-l)<0$}.
\end{cases}
\]
\end{Def}
\begin{Prop}
\label{proexp}
\begin{itemize}
\item[i)] The Definition of $A^z$ for $\Re(z)\geq 0$ does not depend on the integer $l$.
\item[ii)] $A^z A^s= A^{z+s}$ for all $z, s \in \C$.
\item[iii)] $A^k= \underbrace{A \circ \ldots \circ A}_{k \mbox{ times}}$ when $z$ coincides with the positive integer $k$.
\item[iv)] If $A \in L^{m_1, m_2}$ then $A^z \in L^{m_1 z, m_2 z}$.
\end{itemize}
\end{Prop}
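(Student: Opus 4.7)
I intend to base the proof on two pillars: (A) the semigroup law $A_z A_s = A_{z+s}$ for $\Re(z), \Re(s) < 0$, and (B) the identification $A_{-1} = A^{-1}$ with the operator inverse guaranteed by Assumption A3. Parts (i)--(iii) are then algebraic consequences; part (iv) is the technical core.

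The identity (A) is proved by the classical Seeley trick. I take two copies $\Gamma$ and $\Gamma'$ of the contour, positioned so that $\Gamma'$ is a deformation of $\Gamma$ staying to one chosen side. Theorem \ref{sect} gives $\|(A-\lambda I)^{-1}\|_{\mathcal{L}(L^2)} = O(|\lambda|^{-1})$, so Fubini applies; the resolvent identity
\[
    (A-\lambda I)^{-1}(A - \mu I)^{-1} = \frac{(A-\lambda I)^{-1} - (A-\mu I)^{-1}}{\lambda - \mu}
\]
converts the double integral into two iterated ones, and exchanging the inner variable reduces matters to the Cauchy integrals $\int_\Gamma \lambda^z/(\lambda-\mu)\,d\lambda$ and $\int_{\Gamma'} \mu^s/(\lambda-\mu)\,d\mu$. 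The relative geometry of $\Gamma, \Gamma'$ (together with the branch structure of $\lambda^z, \mu^s$ on $\C\setminus\Lambda$) is arranged so that exactly one of the two contours encloses its pole and reproduces the integrand, while the other vanishes by closing at infinity; reassembly then yields $A_{z+s}$.

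The identification (B) is immediate from the Dunford--Taylor functional calculus: the contour $\Gamma$ encloses the unique singularity $\lambda=0$ of $\lambda \mapsto \lambda^{-1}$ while remaining disjoint from the spectrum of $A$ (which avoids both $\Lambda$ and $0$, by A2 and A3), so $A_{-1}$ evaluates $f(A)$ with $f(\lambda)=\lambda^{-1}$, i.e., $A_{-1}=A^{-1}$. Combining (A) and (B), for $\Re(z) < -1$ one has $A_z = A_{z+1}A_{-1} = A_{z+1}A^{-1}$, hence $A_z A = A_{z+1}$; iterating gives
\[
    A_w\, A^k = A_{w+k} \qquad \text{for every } k \in \N \text{ with } \Re(w+k) < 0,
\]
where $A^k$ denotes $k$-fold operator composition. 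Setting $w = z - l_2$, $k = l_2 - l_1$ with $l_1 < l_2$ both admissible shows $A_{z-l_1} A^{l_1} = A_{z-l_2} A^{l_2}$, proving (i); taking $z = k$ a positive integer with $l > k$ and using $A_{-1} A = I$ collapses $A^k = A_{k-l}A^l$ to the $k$-fold composition, proving (iii); part (ii) follows by a short case analysis that combines the semigroup law for negative real parts with the identity just displayed.

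The principal obstacle is (iv). Following the strategy of \cite{MSS06}, I construct the parametrix $B(\lambda) = \Op{b(\lambda; x, \xi)}$ of $A - \lambda I$ as a parameter-dependent $SG$-operator with symbol $b(\lambda) \sim \sum_{k \ge 0} b_k(\lambda)$ whose principal part is $b_0 = (a - \lambda)^{-1}$. Theorem \ref{sect} guarantees that the error $(A - \lambda I)^{-1} - B(\lambda)$ contributes only a smoothing operator to $A_z$ after integration. For the parametrix piece, each contour integral
\[
    \frac{1}{2\pi i}\int_\Gamma \lambda^z\, b_k(\lambda; x, \xi)\,d\lambda
\]
is evaluated via Cauchy's formula: the leading one produces the pointwise branch $a(x,\xi)^z$, well defined since $a$ is $\Lambda$-elliptic, and Theorem \ref{thm4.6.1.1}, transporting $a$ to a smooth function on $\mathbb{B}^n \times \mathbb{B}^n$, makes it routine to verify that $a^z \in SG^{m_1 z, m_2 z}_\cl$. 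Each subsequent $b_k$ lowers both $SG$-orders by $k$, and Theorem \ref{thm4.6} reassembles the asymptotic sum into an element of $SG^{m_1 z, m_2 z}_\cl$, yielding $A^z \in L^{m_1 z, m_2 z}_\cl$. The hard technical point is the uniform $\lambda$-control of the $b_k$ in the Fr\'echet seminorms of the $SG$-symbol classes: without such parameter-dependent estimates, the contour integral converges only in operator norm and not in the symbol topology required to read off the $SG$-order of $A^z$.
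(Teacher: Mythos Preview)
The paper does not supply its own proof of this proposition: immediately after the statement it simply writes ``The proof can be found, e.g., in \cite{SC88} and \cite{SH87}.'' Your outline for parts (i)--(iii) is precisely the classical argument one finds in those references (the Seeley double-contour trick for the semigroup law, the Dunford--Taylor identification $A_{-1}=A^{-1}$, and the ensuing algebraic bootstrap), so on those items there is nothing to compare beyond noting that your plan is the standard one and is correct.

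Where your proposal diverges from the paper is in part (iv). The proposition as stated only claims $A^z\in L^{m_1z,m_2z}$, i.e.\ membership in the \emph{general} $SG$-class; for this one needs only the parameter-dependent parametrix and the symbol estimates coming from Lemma~\ref{elllemma}, not any polyhomogeneous structure. Your argument, however, invokes Theorems~\ref{thm4.6} and~\ref{thm4.6.1.1} and concludes $a^z\in SG^{m_1z,m_2z}_\cl$. That is a strictly stronger statement, and it is exactly the content of the paper's Theorem~\ref{sgrezneg}, which the authors isolate as a separate result and prove in detail (their proof matches yours almost step for step: transport to $C^\infty(\mathbb{B}^n\times\mathbb{B}^n)$ via $D^{\mathbf m z}$, the change of variable $\lambda=w_{-\mathbf m}\mu$, and reassembly through Theorem~\ref{thm4.6}). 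So in effect you have folded Theorem~\ref{sgrezneg} into Proposition~\ref{proexp}(iv); this is not wrong under Assumptions~\ref{assu}, but you should be aware that the proposition itself, as literally stated, does not presuppose $A$ classical and is meant to be the coarser order statement, with classicality deferred to the subsequent theorem.
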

\noindent
The proof can be found, e.g., in  \cite{SC88} and \cite{SH87}. Note that the definition and properties of $SG$-symbols and operators with complex double order $(z_1,z_2)$ are analogous to those given above, with $\Re(z_1),\Re(z_2)$ in place of $m_1,m_2$, respectively.

\begin{rem}
\label{ellrem}
An application of Lemma \ref{elllemma} implies that the symbol of the operator $A^z$ has the form
\[
\simb(A^z)= \frac{1}{2\pi i} \int_{\partial^+ \Omega_{\x, \csi}} \lambda^z \, \simb((A-\lambda I)^{-1}) d\lambda.
\]
\end{rem}

\noindent
It is a fact that, given a $SG$-classical operator $A$ satisfying Assumptions \ref{assu}, $A^z$ is still classical: Maniccia, Schrohe and Seiler proved this in \cite{MSS06} by direct computation, finding the $SG$-classical expansion of $\simb(A^z)$. For future reference and sake of completeness, we prove here the same result by a different technique, which makes use of the identification between $SG$-classical symbols and $C^\infty(\mathbb{B}^n \times \mathbb{B}^n)$ given in Theorem \ref{thm4.6.1.1}.

\begin{Thm}
\label{sgrezneg}
Let $A \in L_{\cl}^{m_1,m_2}$ be an operator satisfying Assumptions \ref{assu}. Then $A^z$, $\Re (z) < 0$, is $SG$-classical of order $(m_1 z,m_2z)$.
\end{Thm}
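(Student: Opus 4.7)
The goal is to show $\simb(A^z)\in SG^{m_1 z,m_2 z}_{\cl}$ by factoring it as $\csi^{m_1 z}\x^{m_2 z}\cdot g(x,\xi)$ with $g\in SG^{0,0}_{\cl}$, which by the definition of classicality immediately implies $\simb(A^z)$ classical of the required order. Starting from the integral representation in Remark \ref{ellrem}, the first step is the spectral rescaling $\lambda=\csi^{m_1}\x^{m_2}\mu$: because $\Omega_{\x,\csi}$ has radii $\csi^{m_1}\x^{m_2}/c_0$ and $c_0\csi^{m_1}\x^{m_2}$, this change of variables turns the $(x,\xi)$-dependent contour $\partial^+\Omega_{\x,\csi}$ into a contour $\Gamma'$ in the $\mu$-plane independent of $(x,\xi)$, and pulls out the expected weight factor, leading to
\[
\simb(A^z)(x,\xi)=\csi^{m_1 z}\x^{m_2 z}\cdot g(x,\xi),\qquad g(x,\xi):=\frac{1}{2\pi i}\int_{\Gamma'}\mu^z\,\csi^{m_1}\x^{m_2}\,r(x,\xi;\csi^{m_1}\x^{m_2}\mu)\,d\mu,
\]
where $r(x,\xi;\lambda):=\simb((A-\lambda I)^{-1})(x,\xi)$.

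To verify $g\in SG^{0,0}_{\cl}$, I would invoke Theorem \ref{thm4.6.1.1} with order $(0,0)$, which reduces classicality to smoothness of $(y,\eta)\mapsto g(\chi(y),\chi(\eta))$ on $\mathbb{B}^n\times\mathbb{B}^n$. The parametrix of $A-\lambda I$ is constructed by the standard Leibniz recursion, starting from the principal term $(a-\lambda)^{-1}$; this yields an asymptotic expansion whose homogeneous components are $SG$-classical in $(x,\xi)$, and by Theorem \ref{thm4.6} the full parametrix symbol lies in $SG^{-m_1,-m_2}_{\cl}$ for every $\lambda$ on the contour. Under the rescaling, the leading term becomes $(\tilde a(x,\xi)-\mu)^{-1}$ with $\tilde a:=a/(\csi^{m_1}\x^{m_2})\in SG^{0,0}_{\cl}$, and the subleading contributions are polynomials in $(\tilde a-\mu)^{-1}$ multiplied by classical symbols of order $(0,0)$. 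Consequently, for each fixed $\mu\in\Gamma'$, the integrand multiplied by $\csi^{m_1}\x^{m_2}$ is a $SG$-classical symbol of order $(0,0)$ whose pullback by $(\chi,\chi)$ is smooth on $\mathbb{B}^n\times\mathbb{B}^n$.

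The main obstacle is justifying differentiation under the integral sign along the unbounded contour $\Gamma'$. Every $(y,\eta)$-derivative of the inner integrand produces additional factors of $(\tilde a-\mu)^{-1}$, and one needs uniform-in-$\mu$ control of their growth. The decisive input is Lemma \ref{elllemma}, which under the rescaling becomes the bound $|(\tilde a(x,\xi)-\mu)^{-1}|\le C(|\mu|+1)^{-1}$, valid uniformly in $(x,\xi)\in\R^n\times\R^n$ and $\mu\in\Gamma'$; hence each derivative of the integrand is dominated by a fixed power $(|\mu|+1)^{-N}$, while the factor $\mu^z$ contributes the additional decay $|\mu|^{\Re z}$ with $\Re z<0$. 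Combining these estimates with a standard dominated convergence argument secures absolute convergence of the integral together with all its $(y,\eta)$-derivatives, so $g\circ(\chi,\chi)\in C^\infty(\mathbb{B}^n\times\mathbb{B}^n)$; hence $g\in SG^{0,0}_{\cl}$ and $\simb(A^z)\in SG^{m_1 z,m_2 z}_{\cl}$ as claimed.
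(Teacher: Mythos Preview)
Your strategy shares the paper's core tools---Theorem~\ref{thm4.6.1.1}, a spectral rescaling of $\lambda$, and the resolvent bound of Lemma~\ref{elllemma}---but organises them differently. You rescale $\lambda=\csi^{m_1}\x^{m_2}\mu$ before pullback and aim to show the quotient $g\in SG^{0,0}_{\cl}$ in one stroke. The paper instead separates the principal symbol, writing $a=a_\m+r$ with $r\in SG^{\m-\mathbf{e}}_{\cl}$, expands $(a-\lambda)^{-1}$ as a finite geometric series in $(a_\m-\lambda)^{-1}r$ plus a controlled remainder $R_N$, pulls back each term $b_k$ via $D^{\m z-k\mathbf{e}}$, and only then rescales $\lambda=w_{-\m}(y,\eta)\mu$ to exhibit smoothness on $\mathbb{B}^n\times\mathbb{B}^n$ explicitly (equation~\eqref{eq:com}); Theorem~\ref{thm4.6} reassembles the resulting classical symbols of decreasing order.

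The gap in your argument lies in the passage from the leading term to the full resolvent symbol. After your rescaling the subleading parametrix contributions are classical of orders $(-j,-j)$, not $(0,0)$, so asserting that the integrand is ``a $SG$-classical symbol of order $(0,0)$'' for each fixed $\mu$ already presupposes an asymptotic summation that you have not carried out uniformly in $\mu$. More seriously, your dominated-convergence step controls only $\mu$-integrability along $\Gamma'$; it does not by itself show that the pulled-back integrand and all its $(y,\eta)$-derivatives extend smoothly to $\partial(\mathbb{B}^n\times\mathbb{B}^n)$, which is exactly what Theorem~\ref{thm4.6.1.1} demands. The claim that each $(y,\eta)$-derivative ``produces additional factors of $(\tilde a-\mu)^{-1}$'' is correct for the principal term $(\tilde a-\mu)^{-1}$ but not for the full symbol $r(x,\xi;\lambda)$, and you also do not treat the smoothing remainder $q$ separating the parametrix from the actual resolvent. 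Filling these points in amounts, in effect, to the paper's explicit term-by-term computation.
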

\begin{proof}
In this proof we use vector notation for the orders, setting $\m=(m_1,m_2)$, $\textbf{e}=(1,1)$.
By Lemma \ref{elllemma} and Remark \ref{ellrem} we know that
\[
a^z=\simb(A^z)= \frac{1}{2 \pi i} \int_{\partial^+ \Omega_{\x, \csi}} \lambda^z \,\simb((A-\lambda I)^{-1}) d\lambda.
\]
We have to prove that $a^z\in SG^{\m z}_\cl$. To begin, we claim that
\[
b_{\m z}(x,\xi) = \frac{1}{2\pi i} \int_{\partial^+ \Omega_{\x, \csi}} 
\lambda^z(a_{\m}(x,\xi)-\lambda)^{-1} d\lambda = [a_{\m}(x,\xi)]^z \in SG^{\m z}_{\cl},\, \Re (z)  < 0.
\] 
In view of Theorem \ref{thm4.6.1.1}, it is enough to show that $(D^{\m z}b_{\m z})(y,\eta) \in C^{\infty}(\mathbb{B}^n \times \mathbb{B}^n)$. 
For $\mathbf{t}=(t_1,t_2)$, set $w_{\mathbf{t}}(y,\eta) = (1-[\eta])^{t_{1}} (1-[y])^{t_{2}}$. By the change of variable $\lambda = w_{-\m}(y,\eta) \mu$, we get
\begin{eqnarray*}
    (D^{\m z}b_{\m z})(y,\eta) & = & \frac{1}{2\pi i} \int_{\partial^+ \Omega_{\norm{\chi(y)}, \norm{\chi(\eta)}}} 
\frac{\lambda^z \, w_{\m z}(y,\eta)}{a_{\m}(\chi(y),\chi(\eta))-\lambda} d\lambda
\\
& = & \frac{1}{2\pi i} \int_{\partial^+ \widetilde{\Omega}_{y,\eta}} 
\frac{\mu^z \, w_{-\m}(y,\eta)}{a_{\m}(\chi(y),\chi(\eta))-\mu \, w_{-\m}(y,\eta)} 
d\mu
\\
& = & \frac{1}{2\pi i} \int_{\partial^+ \widetilde{\Omega}_{y,\eta}} 
\frac{\mu^z}{(D^\m a_{\m})(y,\eta)-\mu} d\mu
\end{eqnarray*}
By Lemma \ref{elllemma},
$|a_{\m}(x,\xi) - \lambda| \ge c(\norm{\xi}^{m_{1}}
    \norm{x}^{m_{2}} + |\lambda|)$, which implies
    $|(D^\m a_{\m})(y,\eta) - \mu| \ge c(1 + |\mu|)$, so that
$D^{\m z} b_{\m z} \in C^\infty(\mathbb{B}^n\times \mathbb{B}^n)$, as claimed.

\noindent
By the parametrix construction in the $SG$-calculus, and in view of the $\Lambda$-ellipticity of $A$, we have
\[
(A-\lambda I)^{-1} = \op((a-\lambda)^{-1})+\op(c)+\op(q),
\]
where $q\in SG^{-\infty}$, $c=\simb((A-\lambda I)^{-1}-\op((a-\lambda)^{-1}))\sim\sum_{j=1}^\infty c_j$, $c_j=r_j\,(a-\lambda)^{-1}$, $r_j\in 
SG^{-j\mathbf{e}}_\cl$, $j\ge 1$, see \cite{MSS06}. We can then write
\begin{equation}
\label{eq:ris}
\begin{split}
a^z=& \frac{1}{2\pi i}\int_{\partial^+ \Omega_{\x, \csi}} \lambda^z \simb((A-\lambda I)^{-1})d\lambda\\
\sim&\frac{1}{2\pi i}\int_{\partial^+ \Omega_{\x, \csi}} \lambda^z (a-\lambda)^{-1}d\lambda
+\frac{1}{2\pi i}\sum_{j=1}^{\infty}\int_{\partial^+ \Omega_{\x, \csi} }\lambda^z r_j(a-\lambda)^{-1}d\lambda\\
+&\frac{1}{2\pi i}\int_{\partial^+ \Omega_{\x, \csi}} \lambda^z q \,d\lambda.
\end{split}
\end{equation}
Let us consider the first term.
The operator $A$ is $SG$-classical so $a= a_\m + r$, $r \in SG^{\m-\mathbf{e}}_\cl$.
We have, for all $N \in \N{}$,
\[
	\begin{split}
    (a-\lambda)^{-1}&= (a_{\m} + r - \lambda)^{-1} \\
    &= (a_{\m}-\lambda)^{-1}(1+(a_{\m}-\lambda)^{-1}r)^{-1}\\
    &= (a_{\m}-\lambda)^{-1}\left[ \sum_{k=0}^N
    (-1)^k(a_{\m}-\lambda)^{-k} r^k \right.\\
    &\hspace{2cm}\left.\phantom{\sum_{k=0}^N}+ (-1)^{N+1}(1+(a_{\m}-\lambda)^{-1}r)^{-1}
    (a_{\m}-\lambda)^{-(N+1)}r^{N+1}\right],
    \end{split}
\]
and then
\begin{eqnarray*}
    b& = &
       \frac{1}{2\pi i} \int_{\partial \Omega_{\x, \csi}} 
       \lambda^z(a-\lambda)^{-1}
       d\lambda
\\
& = & \underbrace{\frac{1}{2\pi i} \int_{\partial \Omega_{\x, \csi}} 
       \lambda^z(a_{\m}-\lambda)^{-1}
       d\lambda}_{b_{\m z}}
       + \sum_{k=1}^{N}
       \underbrace{\frac{(-1)^k}{2\pi i} \int_{\partial \Omega_{\x, \csi}} 
       \lambda^z r^k(a_{\m}-\lambda)^{-k-1}
       d\lambda}_{b_k} + R_{N},
\end{eqnarray*}
where 
\[
R_N= \frac{(-1)^{N+1}}{2\pi i} \int_{\partial \Omega_{\x, \csi}} {\lambda^z}(1+(a_{\m}-\lambda)^{-1}r)^{-1}
    (a_{\m}-\lambda)^{-(N+2)}r^{N+1} d\lambda.
\]
By the calculus and the hypotheses, it turns out that $R_{N} \in SG^{\m-(N+1)\mathbf{e}}$. 
Moreover, $b_{k} \in SG^{\m z-k\mathbf{e}}_{\cl}$, $k\ge 1$. Indeed, as above,
\begin{equation}
\begin{split}
\label{eq:com}
    (D^{\m z-k \mathbf{e}}&b_{k})(y,\eta)  = \frac{(-1)^k}{2\pi i} \int_{\partial^+ \Omega_{\norm{\chi(y)}, \norm{\chi(\eta)}}} 
\frac{\lambda^z \, w_{\m z-k\mathbf{e}}(y,\eta)\, r^k(\chi(y),\chi(\eta))}
     {(a_{\m}(\chi(y),\chi(\eta))-\lambda)^{k+1}} d\lambda
\\
&= \frac{(-1)^k}{2\pi i} \int_{\partial^+ \widetilde{\Omega}_{y,\eta}} 
\frac{\mu^z \, w_{-\m z}(y,\eta) \, w_{\m z-k\mathbf{e}}(y,\eta)\,  
      r^k(\chi(y),\chi(\eta))}
     {w_{\m}(y,\eta) \, (a_{\m}(\chi(y),\chi(\eta))-\mu \, w_{-\m}(y,\eta))^{k+1}}
     d\mu
     \\
&= \frac{(-1)^k}{2\pi i} \int_{\partial^+ \widetilde{\Omega}_{y,\eta}}
\frac{\mu^z \, ( (D^{\m-\mathbf{e}} r)(y,\eta) )^k}
     {((D^\m a_{\m})(y,\eta)-\mu)^{k+1}} d\mu \in 
     C^\infty(\mathbb{B}^n\times \mathbb{B}^n).
\end{split}
\end{equation}
Theorem \ref{thm4.6} then gives $b\in SG^{\m z}_\cl$ with $b-b_{\m z}\in SG^{\m z-\mathbf{e}}_\cl$. 
In a completely similar fashion, it is possible to prove that the asymptotic sum in \eqref{eq:ris}
gives a symbol in $SG^{\m z-\mathbf{e}}_\cl$, since $D^{-j\mathbf{e}}r_j$ is smooth and uniformly bounded,
together with its derivatives, with respect to $\mu$ (see \cite{MSS06} for more details).
Finally, it is easy to see that the third term in \eqref{eq:ris} gives a smoothing operator.
Again by Theorem \ref{thm4.6}, $a^z \in SG^{\m z}_{\cl}$, with $a^z=[a_{\m}(x,\xi)]^z \mod SG^{\m z-\mathbf{e}}_\cl$.
\end{proof}

\begin{rem}
\label{alllz}
By Definition \ref{allz},
\[
A^z= A^l \circ A^{z-l}, \quad \Re(z-l)<0,
\]
and, by Theorem \ref{sgrezneg}, we obtain that $A^z$ is a $SG$-classical operator for all $z \in \C$. 
So, denoting $a^l_{m_1 l-j, \cdot}(x, \xi), j=0,1, \ldots$, the terms of the homogeneous expansion with respect to $\xi$ of $A^l$, the 
$SG$-calculus implies
\begin{equation}
\label{svxi}
a^z_{m_1z-j,\cdot}(x, \xi)= \frac{1}{\alpha!}\sum_{|\alpha|+i+k=j} \partial^{\alpha}_\xi a^l_{m_1 l-i, \cdot} D^\alpha_x a^{z-l}_{m_1(z-l)-k, \cdot}.
\end{equation}
The same holds for the $x$-expansion
\begin{equation}
\label{svx}
a^z_{\cdot, m_2z-k}= \frac{1}{\alpha!}\sum_{|\alpha|+i+j=k} \partial^{\alpha}_\xi a^l_{\cdot, m_2 l-i} D^\alpha_x a^{z-l}_{\cdot,m_2(z-l)-j}.
\end{equation}
\end{rem}

\noindent
The following Proposition is immediate, in view of the proof of Theorem \ref{sgrezneg}:
\begin{Prop}
	\label{invprinc}
	The top order terms in the expansions \eqref{svxi}, \eqref{svx} are such that
	\begin{equation}
	\label{invprin}
		\begin{split}
			a^z_{m_1z,\cdot}&= (a_{m_1,\cdot})^z,\\
			a^z_{\cdot,m_2z}&=(a_{\cdot,m_2})^z,\\
			a^z_{m_1z,m_2z}&=(a_{m_1,m_2})^z.
		\end{split}
		\end{equation}
\end{Prop}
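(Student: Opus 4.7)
The plan is to extract the principal symbols by carefully tracking the top-order terms produced in the proof of Theorem~\ref{sgrezneg}. First I reduce to the case $\Re(z) < 0$. For general $z$, Definition~\ref{allz} gives $A^z = A^l \circ A^{z-l}$ with $l \in \N$ and $\Re(z-l) < 0$, while the leading coefficients in the composition formulas \eqref{svxi} and \eqref{svx} come from the $\alpha = i = k = 0$ term and are therefore multiplicative in the three principal symbols. The identities \eqref{invprin} for $A^l$ are nothing but the multiplicativity of $\sigma_\psi$, $\sigma_e$, $\sigma_{\psi e}$ under iterated composition of $SG$-classical operators, so only the case $\Re(z) < 0$ requires genuine work.

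For $\Re(z) < 0$, the last line of the proof of Theorem~\ref{sgrezneg} establishes
\[
	a^z(x,\xi) = [a_\m(x,\xi)]^z + r(x,\xi), \qquad r \in SG^{\m z - \mathbf{e}}_\cl,
\]
where $a_\m = \omega(\xi) a_{m_1,\cdot} + \omega(x)\bigl(a_{\cdot, m_2} - \omega(\xi) a_{m_1, m_2}\bigr)$. Since $r$ has strictly lower order in both $x$ and $\xi$, to identify the three principal symbols of $a^z$ it suffices to analyze $[a_\m]^z$ in the corresponding asymptotic regimes. Restricting to $|\xi|$ large with $x$ bounded, $\omega(\xi) = 1$, and the $SG$-classical compatibility between the $x$- and $\xi$-expansions forces $a_{\cdot, m_2} - a_{m_1, m_2}$ to be of order $m_1 - 1$ in $\xi$. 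Factoring
\[
	a_\m(x,\xi) = a_{m_1,\cdot}(x,\xi)\,\Bigl( 1 + \frac{a_\m(x,\xi) - a_{m_1,\cdot}(x,\xi)}{a_{m_1,\cdot}(x,\xi)}\Bigr),
\]
the perturbation is of order $SG^{-1,0}$ on such a cone, so the symbolic Taylor expansion of $(1+w)^z$ gives $[a_\m]^z - (a_{m_1,\cdot})^z \in SG^{m_1 z - 1, m_2 z}_\cl$. Since $(a_{m_1,\cdot})^z$ is homogeneous of degree $m_1 z$ in $\xi$, this identifies $a^z_{m_1 z, \cdot} = (a_{m_1,\cdot})^z$. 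The symmetric analysis in the role of $x$ yields $a^z_{\cdot, m_2 z} = (a_{\cdot, m_2})^z$, and the joint limit $|x|,|\xi| \to \infty$ yields $a^z_{m_1 z, m_2 z} = (a_{m_1, m_2})^z$.

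The main delicacy I expect to confront is the choice of branch for the pointwise $z$-th power: one must ensure that $(a_{m_1,\cdot})^z$, $(a_{\cdot,m_2})^z$ and $(a_{m_1,m_2})^z$ defined via the principal branch agree with what is extracted from the contour integral representation of $A^z$. This is exactly guaranteed by Proposition~\ref{prop:lambdaell} together with Assumptions~\ref{assu}, which force each principal symbol to avoid the sector $\Lambda$, so that the contour $\Gamma$ can be deformed uniformly to enclose only the values of the relevant principal symbol without crossing the cut. Granted this, the $(1+w)^z$ expansion converges term-by-term in the classical $SG$-scale, and Proposition~\ref{invprinc} is precisely the leading order of that expansion.
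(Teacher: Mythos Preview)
Your argument is correct and follows the paper's intended route: the paper's proof consists solely of the remark that the proposition is ``immediate, in view of the proof of Theorem~\ref{sgrezneg}'', i.e., of the identity $a^z=[a_\m]^z\bmod SG^{\m z-\mathbf{e}}_\cl$ established there. You have simply supplied the details the paper leaves implicit, namely the extraction of the three principal symbols of $[a_\m]^z$ via the factorisation $a_\m=a_{m_1,\cdot}(1+w)$ and the symbolic binomial expansion of $(1+w)^z$.

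One small remark on presentation: the phrase ``restricting to $|\xi|$ large with $x$ bounded'' is misleading, since the identification $a^z_{m_1z,\cdot}=(a_{m_1,\cdot})^z$ must hold for \emph{all} $x\in\R^n$, uniformly as $|x|\to\infty$. Your subsequent claim that $w\in SG^{-1,0}$ is nevertheless correct globally: for $|\xi|$ large one has $a_\m-a_{m_1,\cdot}=\omega(x)(a_{\cdot,m_2}-a_{m_1,m_2})\in SG^{m_1-1,m_2}$, while the $SG$-ellipticity of $a_{m_1,\cdot}$ (a consequence of Theorem~\ref{thm:ellclass} and Assumptions~\ref{assu}) gives $a_{m_1,\cdot}^{-1}\in SG^{-m_1,-m_2}$, so the quotient indeed lies in $SG^{-1,0}$. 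You might drop the ``$x$ bounded'' qualifier to avoid confusion. The paper's implicit viewpoint---via the homeomorphism $D^\m$ of Theorem~\ref{thm4.6.1.1}, where principal symbols correspond to boundary restrictions on $\mathbb{B}^n\times\mathbb{B}^n$ and $(D^{\m z}[a_\m]^z)=[(D^\m a_\m)]^z$---makes this step genuinely a one-liner, but your direct argument is equally valid.
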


\begin{rem}
In order to define $A^z$ we do not need $m_1, m_2$ integer numbers. Anyway, this hypothesis is essential in the definition of Wodzicki residue given below, so we included it from the very beginning in Assumptions \ref{assu}. 
\end{rem}

%
%

\section{\texorpdfstring{$\zeta$ function and Wodzicki residue for $SG$-classical operators on $\R^n$}
{Zeta function of the operator A}}
\label{seczeta}
\setcounter{equation}{0}

In \cite{SC88} E. Schrohe noticed that, for $A\in L^{m_1,m_2}$ such that $\Re(z)m_1<-n$ and $\Re(z)m_2<-n$, $A^z$ is trace class, so he defined
\begin{equation}
\label{zeta}
\zeta(A, z)= Sp(A^z)=\int K_{A^z}(x, x) dx,
\end{equation}
where $Sp$ is the spur of $A^z$, i.e.,  a trace on the algebra of trace class operators. 
Assuming that $A$ is $SG$-classical and elliptic, we want to study the meromorphic extension of $\zeta(A,z)$: this will allow to define
trace operators, in connection with the residues of $\zeta(A,z)$.
We first consider the kernel $K_{A^z}(x, y)$ of
the operator $A^z$ defined in \ref{sec:zpower}.
The information provided by the knowledge of the homogeneous expansions of the symbol of $A^z$ allows to investigate in detail the properties of $K_{A^z}(x,y)$ on the diagonal $(x,x)$. 
\begin{Thm}
\label{kernel}
Let $A$ be an elliptic operator that satisfies Assumptions \ref{assu}. Then,
$K_{A^z}(x,x)$ is a holomorphic function for $\Re(z)<-\frac{n}{m_1}$ and admits, at most, simple
poles at the points $z_j=\frac{j-n}{m_1}$, $j=0, 1, \ldots$.
\end{Thm}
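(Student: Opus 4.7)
The plan is to express $K_{A^z}(x,x)$ as the $\xi$-integral of $a^z(x,\xi)$, exploit the classical expansion of the symbol in $\xi$ (which exists by Theorem \ref{sgrezneg}), and analyse each homogeneous piece by a polar decomposition in $\xi$, where the meromorphic structure will emerge from the radial integral.

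\emph{Step 1 (initial holomorphy).} Since $a^z \in SG^{m_1 z,m_2 z}_{\cl}$ by Theorem \ref{sgrezneg}, the estimate $|a^z(x,\xi)|\le C_x \langle\xi\rangle^{m_1 \Re(z)}\langle x\rangle^{m_2\Re(z)}$ shows that
\[
	K_{A^z}(x,x)=\frac{1}{(2\pi)^n}\int_{\R^n} a^z(x,\xi)\,d\xi
\]
converges absolutely, and depends holomorphically on $z$, whenever $m_1\Re(z)+n<0$, that is $\Re(z)<-n/m_1$.

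\emph{Step 2 (splitting the symbol).} For every $N\ge 1$, by Definition \ref{def:sgclass-b},
\[
	a^z(x,\xi)=\sum_{j=0}^{N-1}\omega(\xi)\,a^z_{m_1 z-j,\cdot}(x,\xi)+r^z_N(x,\xi),\qquad r^z_N\in SG^{m_1 z-N,m_2z},
\]
with $\omega$ a $0$-excision function which we choose radial. The remainder $\int r^z_N(x,\xi)\,d\xi$ is holomorphic in $z$ for $\Re(z)<(N-n)/m_1$, and depends smoothly on $x$; hence the possible poles of $K_{A^z}(x,x)$ in the half-plane $\Re(z)>(N-n)/m_1-\epsilon$ come only from finitely many homogeneous terms, and letting $N\to\infty$ we recover the whole plane.

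\emph{Step 3 (radial integration of each homogeneous term).} Using $\xi=r\theta$, $r>0$, $\theta\in\mathbb{S}^{n-1}$, and the homogeneity $a^z_{m_1 z-j,\cdot}(x,r\theta)=r^{m_1 z-j}a^z_{m_1 z-j,\cdot}(x,\theta)$,
\[
	\int \omega(\xi)\,a^z_{m_1 z-j,\cdot}(x,\xi)\,d\xi
	=\left(\int_{\mathbb{S}^{n-1}}a^z_{m_1 z-j,\cdot}(x,\theta)\,d\theta\right)\int_0^\infty \omega(r)\,r^{m_1 z-j+n-1}\,dr.
\]
Splitting the radial integral at $r=1$, the piece on $[0,1]$ is entire in $z$, while
\[
	\int_1^\infty r^{m_1 z-j+n-1}\,dr=-\frac{1}{m_1 z-j+n}=\frac{1}{m_1}\cdot\frac{1}{z_j-z},\qquad z_j=\frac{j-n}{m_1},
\]
after meromorphic continuation. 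This yields a simple pole at $z=z_j$, with residue proportional to the spherical integral of $a^z_{m_1z-j,\cdot}(x,\cdot)$ (which, being a polynomial in $z$ times a smooth function, is an entire function of $z$, so cannot create higher order poles or cancellations in $z$).

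\emph{Step 4 (conclusion).} Summing the contributions of the homogeneous terms in Step 3, together with the holomorphic remainder of Step 2, we conclude that $K_{A^z}(x,x)$ continues meromorphically to $\C$ with poles only at the (simple) candidate points $z_j=(j-n)/m_1$, $j=0,1,\dots$

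The main technical point, and the place where care is needed, is Step 2: one must check that the remainder integral $\int r^z_N(x,\xi)\,d\xi$ is genuinely holomorphic (uniformly in $x$ on compact sets) in the strip $\Re(z)<(N-n)/m_1$, which follows from the $SG$-estimates for $r^z_N$ holomorphic in $z$ (e.g.\ by differentiating \eqref{Az} in $z$ under the contour integral). Once this is established the argument reduces to the elementary radial computation above.
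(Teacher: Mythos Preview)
Your proposal is correct and follows essentially the same approach as the paper: both write $K_{A^z}(x,x)$ as the $\xi$-integral of $a^z$, isolate a compact piece (you via the excision function, the paper via the split at $|\xi|=1$), expand the symbol in its $\xi$-homogeneous components, and extract the simple poles $z_j=(j-n)/m_1$ from the radial integral $\int_1^\infty r^{m_1z-j+n-1}\,dr$. One small inaccuracy: in Step~3 you assert that the spherical integral $\int_{\mathbb{S}^{n-1}} a^z_{m_1z-j,\cdot}(x,\theta)\,d\theta$ is ``a polynomial in $z$ times a smooth function''; this is not the case (already the leading term is $(a_{m_1,\cdot})^z$), but what you actually need, and what holds by the construction of $A^z$ via the contour integral \eqref{Az}, is that it is \emph{holomorphic} in $z$, which suffices to guarantee that the poles remain simple.
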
 
\begin{proof}
Let us consider the kernel $K_{A^z}(x, y)$ on the diagonal $(x,x)$, given by
\[
\begin{split}
K_{A^z}(x,x)&=\frac{1}{(2\pi)^n} \int_{\R^n} \simb(A^z)(x, \xi) d\xi\\
&= \frac{1}{(2\pi)^n}\int_{|\xi|<1}a^z(x, \xi) d\xi+ \frac{1}{(2\pi)^n}\int_{|\xi|\geq 1} a^z(x, \xi) d\xi.
\end{split}
\]
Clearly, the first integral converges, and the resulting function is holomorphic, so we can focus on
\[
\begin{split}
\int_{|\xi|\geq 1} a^z(x, \xi)d\xi=& \int_{|\xi|\geq 1}\sum_{j=0}^{p-1} a^z_{m_1z-j, \cdot}\left(x, \frac{\xi}{|\xi|}\right) |\xi|^{m_1z-j} d\xi\\
&+\int_{|\xi|\geq 1} r_{m_1 z-p,\cdot} (x, \xi) d\xi.
\end{split}
\]
The number $p$  can be chosen such that $m_1 \Re(z)-p<-n$: this means that we have to deal with  the terms appearing in the sum for $j=0,\ldots, p-1$. Switching to polar coordinates
$\xi=\rho \omega$, $\rho \in [1, \infty)$, $\omega \in \mathbb{S}^{n-1}$,
\[
\begin{split}
\int_{|\xi|\geq 1} a^z(x, \xi)d\xi&=\sum_{j=0}^{p-1} \int_1^{\infty} \rho^{m_1z-j+n-1}d\rho \int_{\mathbb{S}^{n-1}} a^z_{m_1z-j}(x, \theta) d\theta \\
&+\int_{|\xi|\geq 1} r_{m_1 z-p,\cdot} (x, \xi) d\xi.
\end{split}
\]
To have convergence in the first integral, we must impose $m_1\Re (z)+n<0$, i.e., $\Re(z)<-\frac{n}{m_1}$. Evaluating the integral, we find
\[
\begin{split}
\int_{|\xi|\geq 1} a^z(x, \xi)d\xi&= -\sum_{j=0}^{p-1}\frac{1}{m_1z-j+n} \int_{\mathbb{S}^{n-1}} a^z_{m_1z-j, \cdot}(x, \theta) d\theta \\
&+ \int_{|\xi|\ge1} r^z_{m_1z-p}(x, \xi) d\xi.
\end{split}
\]  
This proves that $K_{A^z}(x,x)$ is holomorphic for $\Re(z)<-\frac{n}{m_1}$, and that it can be extended as a meromorphic function on the whole complex plane with, at most, simple poles at the points $z_j=\frac{j-n}{m_1}$, $j=0, 1, \ldots$
\end{proof}
\begin{rem}
As in the case of a compact manifold, see \cite{SE67}, we can prove that the kernel $K_{A^z}(x,x)$ is regular for $z=0$ and, if $A$ is a differential operator, $K_{A^z}(x,x)$ is also regular for all  integer.
\end{rem}
\noindent Now we proceed to examine the properties of $\zeta(A,z)$:
\begin{Thm}
\label{fzeta}
Let $A$ be an elliptic operator that satisfies Assumptions \ref{assu}, and define
\begin{equation}
\label{fzetaeq}
\zeta(A, z)= \int_{\R^n} K_{A^z}(x, x) dx= \frac{1}{(2\pi)^n}\int_{\R^n}\int_{\R^n} \simb(A^z)(x,\xi)  d\xi dx.
\end{equation}
The function $\zeta(A,z)$ is holomorphic for $\Re(z)<\min \{-\frac{n}{m_1}, -\frac{n}{m_2}\}$.
Moreover, it can be extended as a meromorphic function with, at most, poles at the points
\[
z_j^1=\frac{j-n}{m_1}, \, j=0, 1, \ldots, \quad z^2_k=\frac{k-n}{m_2}, \, k=0, 1, \ldots
\]
Such poles can be of order two if and only if there exist integers $j, k$ such that
\begin{equation}
\label{conpoli}
z_j^1=\frac{j-n}{m_1}=\frac{k-n}{m_2}=z_k^2.
\end{equation}

\end{Thm}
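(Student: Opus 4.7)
The plan is to adapt the argument of Theorem \ref{kernel} to the two-variable situation by exploiting the bi-homogeneous $SG$-classical structure of $\simb(A^z)$, in particular the compatibility $a^z_{m_1z-j,\cdot}(x,\xi)\sim\sum_k\omega(x)\,a^z_{m_1z-j,m_2z-k}(x,\xi)|x|^{m_2z-k}$ that follows from Definition \ref{def:sgclass-b} and Proposition \ref{invprinc}. I would first split the double integral \eqref{fzetaeq} into the four regions $\{|x|<1\}\times\{|\xi|<1\}$, $\{|x|<1\}\times\{|\xi|\ge1\}$, $\{|x|\ge1\}\times\{|\xi|<1\}$ and $\{|x|\ge1\}\times\{|\xi|\ge1\}$. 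The first piece is the integral of a smooth symbol on a compact set, holomorphic in $z\in\C$.

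For $\{|x|<1\}\times\{|\xi|\ge1\}$, I would expand $a^z$ to order $p$ in the $\xi$-variable,
\[
a^z(x,\xi)=\sum_{j=0}^{p-1}\omega(\xi)\,a^z_{m_1z-j,\cdot}(x,\xi/|\xi|)|\xi|^{m_1z-j}+r^{(\xi)}_p(x,\xi),
\]
with $r^{(\xi)}_p\in SG^{m_1z-p,m_2z}_\cl$. Passing to polar coordinates $\xi=\rho\omega$ and integrating in $x$ over $|x|<1$ and in $\omega\in\mathbb{S}^{n-1}$, the radial factor is exactly $\int_1^\infty\rho^{m_1z-j+n-1}\,d\rho=-(m_1z-j+n)^{-1}$, producing only simple poles at $z_j^1=(j-n)/m_1$; the remainder gives a contribution holomorphic for $\Re(z)<(p-n)/m_1$, which can be pushed arbitrarily to the right by increasing $p$. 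The region $\{|x|\ge1\}\times\{|\xi|<1\}$ is treated symmetrically using the $x$-expansion, producing simple poles at $z_k^2=(k-n)/m_2$.

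The decisive piece is $\{|x|\ge1\}\times\{|\xi|\ge1\}$. Here I would apply both expansions in cascade: first expand in $\xi$ as above, then expand each $a^z_{m_1z-j,\cdot}(x,\xi/|\xi|)$ in $x$ using Definition \ref{def:sgclass-b}(ii), obtaining
\[
a^z_{m_1z-j,\cdot}(x,\xi/|\xi|)=\sum_{k=0}^{q-1}\omega(x)\,a^z_{m_1z-j,m_2z-k}(x/|x|,\xi/|\xi|)|x|^{m_2z-k}+r^{(x)}_{j,q}(x,\xi/|\xi|),
\]
and treat the remaining $\xi$-remainder $r^{(\xi)}_p$ analogously. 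Double polar coordinates $x=r\omega'$, $\xi=\rho\omega$ factor the bi-homogeneous term as
\[
\frac{1}{(m_1z-j+n)(m_2z-k+n)}\int_{\mathbb{S}^{n-1}\times\mathbb{S}^{n-1}}a^z_{m_1z-j,m_2z-k}(\omega',\omega)\,d\omega\,d\omega',
\]
which is absolutely convergent exactly when $\Re(z)<\min\{-n/m_1,-n/m_2\}$, establishing the claimed initial domain of holomorphy. The product of two simple factors then yields a double pole precisely at those $z$ for which $(j-n)/m_1=(k-n)/m_2$, i.e.\ condition \eqref{conpoli}, and a simple pole otherwise. The mixed remainder-times-expansion terms contribute only simple poles of one of the two families, and the fully remainder term is holomorphic in a half-plane that recedes to infinity as $p,q\to\infty$.

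The main technical obstacle I expect is the careful bookkeeping of the remainders: one must check that the $x$-expansion of $a^z_{m_1z-j,\cdot}(x,\xi/|\xi|)$ inherits uniform estimates in $\xi/|\xi|\in\mathbb{S}^{n-1}$ so that its tail, after integration against $|\xi|^{m_1z-j}$ in polar $\xi$-coordinates, only contributes simple poles at the points $z_j^1$, and symmetrically; this is guaranteed by the compatibility of the two expansions built into the $SG$-classical calculus (and Theorem \ref{thm4.6.1.1}), but requires keeping track of which sector of the half-plane each remainder is holomorphic on. Once this is in place, letting $p,q\to\infty$ yields the meromorphic extension to all of $\C$ with the pole structure asserted in the theorem.
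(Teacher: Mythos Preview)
Your proposal is correct and follows essentially the same approach as the paper: the same four-region decomposition of $\R^{2n}$ by $|x|\lessgtr1$, $|\xi|\lessgtr1$, the same use of the $\xi$- and $x$-classical expansions in the three unbounded pieces, and the same double polar-coordinate computation on $\{|x|\ge1\}\times\{|\xi|\ge1\}$ yielding the product $\frac{1}{(m_1z-j+n)(m_2z-k+n)}\,I_{m_1z-j}^{m_2z-k}$ that accounts for the possible double poles. The remainder bookkeeping you flag is exactly what the paper carries out (expanding $r^{(\xi)}_p$ further in $x$, etc.), so there is no gap.
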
 
\begin{proof}
We divide $\R^{2n}$ into the four regions
\[
\begin{split}
\{(x, \xi)\colon |x|\leq1, |\xi|\leq1\},\quad \{(x, \xi)\colon |x|<1, |\xi|> 1\},\\
 \{(x, \xi)\colon |x|>1, |\xi|<1\},\quad \{(x, \xi)\colon |x|\geq1, |\xi|\geq 1\}.
\end{split}
\]
Setting, as above, $a^z=\simb(A^z)$, we can write
\[
\begin{split}
\zeta_1(A,z)&= \frac{1}{(2\pi)^n}\int_{|x|\leq 1} \int_{|\xi|\leq1} a^z(x, \xi) d\xi dx,\\
\zeta_2(A, z)&= \frac{1}{(2\pi)^n}\int_{|x|<1}\int_{|\xi|>1} a^z(x, \xi) d\xi dx,\\
\zeta_3(A,z)&= \frac{1}{(2\pi)^n}\int_{|x|>1}\int_{|\xi|<1} a^z(x, \xi) d\xi dx,\\
\zeta_4(A, z)&=\frac{1}{(2\pi)^n}\int_{|x|\geq 1}\int_{|\xi|\geq 1}a^z(x, \xi) d\xi dx,\\
\zeta(A,z)&=  \sum_{i=1}^4 \zeta_i(A,z),
\end{split}
\]
and examine each term of the sum separately.
\begin{itemize}
\item [1)]The analysis of this term is straightforward. Since we integrate $a^z$, holomorphic function in $z$ and smooth with respect to $(x,\xi)$, on a bounded set with respect to $(x,\xi)$,  $\zeta_1(A,z)$ is holomorphic.
\item [2)]
Using the asymptotic expansion of $a^z$ with respect to $\xi$,  we can write
\[
\begin{split}
\zeta_2(A,z)=&-\frac{1}{(2\pi)^n} \sum_{j=0}^{p-1}\frac{1}{m_1z+n-j} \int_{|x|<1}\int_{\mathbb{S}^{n-1}} a^z_{m_1z-j, \cdot} (x, \theta) d\theta dx\\
&+\frac{1}{(2\pi)^n}\int_{|x|<1}\int_{\R^n} r^z_{m_1z-p, \cdot}(x, \xi) d\xi dx.
\end{split}
\]
Choosing $p>m_1 \Re (z)+n$, the last integral is convergent. For the sum, we can argue as in the  proof of the Theorem \ref{kernel}. So $\zeta_2(A, z)$ is holomorphic for $\Re(z)<-\frac{n}{m_1}$ and has, at most, poles at the points $z^1_j=\frac{j-n}{m_1}$.
\item[3)]

To discuss this term we need the asymptotic expansion of $a^z$ with respect to $x$. Using Proposition \ref{allz}, we can write
\[
a^z(x, \xi)=\sum_{k=0}^{q-1} a^z_{\cdot,m_2z-k} (x, \xi)+ t^z_{\cdot, m_2z-q}(x, \xi),
\]
which implies
\[
\begin{split}
\zeta_3(A,z)&= \frac{1}{(2\pi)^n}\int_{|x|>1}\int_{|\xi|<1} \sum_{k=0}^{q-1} a^z_{\cdot,m_2z-k} \left(\frac{x}{|x|}, \xi\right) |x|^{m_2z-k}d\xi dx\\
&+ \frac{1}{(2\pi)^n}\int_{|x|>1}\int_{|\xi|<1} t^z_{\cdot, m_2z-q}(x, \xi) d\xi dx.
\end{split}
\]
Now, switching to polar coordinates, we can write
\[
\begin{split}
\zeta_3(A,z)&= \frac{1}{(2\pi)^n}\sum_{k=0}^{q-1} \int_{1}^{\infty} \rho^{m_2z+n-1-k} \int_{\mathbb{S}^{n-1}} \int_{|\xi|<1} a^z_{\cdot, m_2z-k}(\theta, \xi)d\xi d\theta d\rho\\
&+\frac{1}{(2\pi)^n}\int_{|x|>1}\int_{|\xi|<1} t^z_{\cdot, m_2z-q}(x,\xi) d\xi dx.
\end{split}
\]
Arguing as in point (2), it turns out that $\zeta_3(A,z)$ is holomorphic for $\Re(z)<-\frac{n}{m_2}$ and can be extended as a meromorphic function on the whole complex plane with, at most, poles at the points $z^2_k= \frac{k-n}{m_2}$.

\item[4)] To treat the last term we need to use both the expansions with respect to $x$ and with respect to $\xi$.
We first expand $a^z$ with respect to $\xi$
\[
\begin{split}
\zeta_4(A,z)&=\frac{1}{(2\pi)^n}\sum_{j=0}^{p-1}\int_{|x|\geq 1}\int_{|\xi|\geq 1}  a^z_{m_1z-j, \cdot}(x, \xi)d\xi dx\\
&+\frac{1}{(2\pi)^n} \int_{|x|\geq 1}\int_{|\xi|\geq 1} r^z_{m_1z-j, \cdot}(x, \xi)d\xi dx.
\end{split}
\]
In order to integrate over $|\xi|\geq 1$, we assume $\Re(z)< -\frac{n}{m_1}$. Now, switching to polar coordinates and integrating the radial part, we can write
\[ 
\begin{split}
\zeta_4(A,z)=& -\frac{1}{(2\pi)^n}\sum_{j=0}^{p-1} \int_{|x|\geq 1} \frac{1}{m_1z+n-j}\int_{\mathbb{S}^{n-1}}a^z_{m_1z-j, \cdot}(x, \theta)d\theta dx\\
&+ \frac{1}{(2\pi)^n}\int_{|x|\geq 1}\int_{|\xi|\geq 1} r^z_{m_1z-p, \cdot}(x, \xi)d\xi dx.
\end{split}
\]
Now, in order to integrate over $|x|\geq 1$, we expand with respect to $x$ 
\[
\begin{split}
\zeta_4(A,z)=& -\frac{1}{(2\pi)^n}\sum_{k=0}^{q-1} \sum_{j=0}^{p-1} \int_{|x|\geq 1} \frac{1}{m_1z+n-j} \int_{\mathbb{S}^{n-1}} a^z_{m_1z-j, m_2z-k}(x, \theta)d\theta dx\\
&-\frac{1}{(2\pi)^n}\sum_{j=0}^{p-1}\frac{1}{m_1z+n-j} \int_{|x|\geq 1}\int_{\mathbb{S}^{n-1}}t^z_{m_1z-j, m_2z-q}(x, \theta) dx d\theta\\
&+\frac{1}{(2\pi)^n}\sum_{k=0}^{q-1} \int_{|x|\geq 1} \int_{|\xi|\geq 1} r^z_{m_1z-p, m_2z-k}(x,\xi) d\xi d\theta\\
&+\frac{1}{(2\pi)^n}\int_{|x|\geq 1}\int_{|\xi|\geq 1} r^z_{m_1z-p, m_2z-q}(x, \xi)dx d\xi.
\end{split}
\]
Imposing $\Re(z)<-\frac{n}{m_2}$, and integrating the radial part with respect to the $x$, we obtain
\[
\begin{split}
\zeta_4(A,z)&= \frac{1}{(2\pi)^n} \sum_{k=0}^{q-1} \sum_{j=0}^{p-1} \frac{1}{m_2z+n-k} \frac{1}{m_1z+n-j} I_{m_1z-j}^{m_2z-k}\\
	&-\frac{1}{(2\pi)^n}\sum_{j=0}^{p-1}\frac{1}{m_1z+n-j}R_{j,q}(z)\\
	&-\frac{1}{(2\pi)^n}\sum_{k=0}^{q-1} \frac{1}{m_2z+n-k} R_{p,k}(z) + R_{p,q}(z)
\end{split}
\]
where 
\begin{equation}
\label{Iint}
I_{m_1z-j}^{ m_2z-k}=\int_{\mathbb{S}^{n-1}}\int_{\mathbb{S}^{n-1}} a^z_{m_1z-j, m_2z-k}(\theta', \theta)d\theta d\theta'.
\end{equation}
$R_{j,k}, R_{p,k}, R_{p,q}$ are holomorphic for $\Re (z) < \min\{-\frac{n}{m_1}, -\frac{n}{m_2}\}$, since $p, q$ are arbitrary. So 
we obtain that $\zeta_4(A,z)$ is holomorphic for $\Re(z)< \min \{-\frac{n}{m_1} , -\frac{n}{m_2}\}$ and can be extended as a meromorphic function
on the whole complex plane with, at most, poles at the points $z^1_j=\frac{j-n}{m_1}, z^2_k= \frac{k-n}{m_2}$. Clearly these poles can be of order two when the conditions \eqref{conpoli} in the statement are fulfilled.
\end{itemize}
The proof  is complete.
\end{proof}

We can now prove two Theorems which show the relation between $\zeta(A,z)$ and the functionals introduced by F. Nicola \cite{NI03}, namely

\begin{equation}
\label{trnicola}
\begin{split}
\textnormal{Tr}_{\psi, e}(A)&= \frac{1}{(2 \pi)^{n}} \int_{\mathbb{S}^{n-1}}\int_{\mathbb{S}^{n-1}} a_{-n, -n}(\theta, \theta') d\theta' d\theta=
\frac{1}{(2 \pi)^{n}} I^{-n}_{-n},\\
\widehat{\textnormal{Tr}}_\psi(A)&= \frac{1}{(2 \pi)^{n}} \lim_{\tau \to \infty} \Big[ \int_{|x|\leq \tau}\int_{\mathbb{S}^{n-1}}a_{-n, \cdot}(x, \theta)d\theta dx\\
&\hspace{2.5cm}-(\log\tau) \, I^{-n}_{-n} - \sum_{k=0}^{m_2+n-1} \frac{\tau^{m_2-k}}{(m_2-k)}I_{-n}^{m_2-k} \Big],\\
\widehat{\textnormal{Tr}}_e(A)&= \frac{1}{(2 \pi)^{n}} \lim_{\tau \to \infty} \Big[\int_{\mathbb{S}^{n-1}} \int_{|\xi|\leq \tau}a_{ \cdot,-n}(\theta, \xi) d\xi d\theta\\
&\hspace{2.5cm}-(\log\tau)\, I^{-n}_{-n} - \sum_{j=0}^{m_1+n-1}\frac{\tau^{m_1-j}}{(m_1-j)}I_{m_1-j}^{-n}\Big],
\end{split}
\end{equation}
where $I_{m_1-j}^{-n}, I_{-n}^{m_2-k}$ are integrals of the form \eqref{Iint} with $a_{m_1-j,-n}$ and  $a_{-n,m_2-k}$ in place of $a^z_{m_1z-j,m_2z-k}$,
respectively.
We define the following new functional, that we call the \textit{angular term}
\begin{equation}
\label{trang}
\widehat{\TR}_{\theta}(A)= \frac{1}{ (2 \pi)^{n}} \int_{\mathbb{S}^{n-1}}\int_{\mathbb{S}^{n-1}} \left.\frac{d}{dz}(a^z_{m_1z-n-m_1, m_2z-n-m_2})\right|_{z=1} (\theta,\theta')d\theta' d\theta.
\end{equation}
\begin{rem}
In general, it is rather cumbersome to evaluate the angular term defined in \eqref{trang}. In the case $m_1=m_2=-n$, the computation is easier: by Proposition \ref{invprinc},
\[
\left.\frac {d}{dz} (a^z_{-n \,z, -n \,z}) \right|_{z=1}=\lim_{z \to 1}\frac{a^z_{-n \, z, -n \, z}- a_{-n,-n}}{z-1}= a_{-n,-n} \cdot \log(a_{-n,-n}) .
\]
\end{rem}
\begin{Thm}
\label{thm:equiv}
Let $A$ be an operator satisfying Assumptions \ref{assu}. Then, defining
\begin{equation}
\label{trnic}
\begin{split}
\TR(A)&=m_1m_2\Res^2_{z=1}(\zeta(A,z))=m_1m_2\lim_{z \to 1} (z-1)^2 \zeta(A,z),
\end{split}
\end{equation}
we have
\begin{equation}
\label{eqnic}
\TR(A)=\textnormal{Tr}_{\psi, e}(A).
\end{equation}
\end{Thm}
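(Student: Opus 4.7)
My plan is to read off the double pole of $\zeta(A,z)$ at $z=1$ directly from the decomposition $\zeta(A,z)=\zeta_1(A,z)+\zeta_2(A,z)+\zeta_3(A,z)+\zeta_4(A,z)$ established in the proof of Theorem~\ref{fzeta}. By construction $\zeta_1(A,z)$ is entire, while $\zeta_2(A,z)$ has at most simple poles only at the points $z^1_j=\frac{j-n}{m_1}$ and $\zeta_3(A,z)$ has at most simple poles only at the points $z^2_k=\frac{k-n}{m_2}$. Hence none of these three pieces contributes to $\lim_{z\to1}(z-1)^2\zeta(A,z)$, and the entire double pole sits in $\zeta_4(A,z)$.

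Next I would isolate the unique summand in the expansion of $\zeta_4(A,z)$ capable of producing a pole of order two at $z=1$. In the explicit formula
\[
\zeta_4(A,z)=\frac{1}{(2\pi)^n}\sum_{k=0}^{q-1}\sum_{j=0}^{p-1}\frac{1}{m_1z+n-j}\cdot\frac{1}{m_2z+n-k}\,I^{m_2z-k}_{m_1z-j}+(\text{simple-pole or holomorphic remainders}),
\]
the two linear factors in the denominator vanish simultaneously at $z=1$ if and only if $j=m_1+n$ and $k=m_2+n$. Choosing $p>m_1+n$ and $q>m_2+n$ (which is allowed, as $p,q$ are arbitrary large integers in the proof of Theorem~\ref{fzeta}), exactly one summand produces the double pole:
\[
\frac{1}{(2\pi)^n}\cdot\frac{1}{m_1(z-1)}\cdot\frac{1}{m_2(z-1)}\,I^{m_2z-m_2-n}_{m_1z-m_1-n}.
\]

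The remaining step is to evaluate the limit. Multiplying by $(z-1)^2$ and letting $z\to1$, the holomorphic remainders and the single-pole terms vanish, and one obtains
\[
\lim_{z\to1}(z-1)^2\zeta(A,z)=\frac{1}{(2\pi)^n\,m_1m_2}\,I^{m_2z-m_2-n}_{m_1z-m_1-n}\Big|_{z=1}=\frac{1}{(2\pi)^n\,m_1m_2}\,I^{-n}_{-n},
\]
where I use continuity in $z$ of the bi-angular integrals $I^{m_2z-k}_{m_1z-j}$, together with Proposition~\ref{invprinc}, which gives $a^1_{-n,-n}=a_{-n,-n}$ and hence $I^{m_2z-m_2-n}_{m_1z-m_1-n}|_{z=1}=I^{-n}_{-n}$. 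Multiplying by $m_1m_2$ yields $\TR(A)=\frac{1}{(2\pi)^n}I^{-n}_{-n}=\textnormal{Tr}_{\psi,e}(A)$, as claimed.

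The only subtle point is to confirm that no other summand in $\zeta_4(A,z)$ (and no contribution from $\zeta_2,\zeta_3$) can create a second-order pole at $z=1$; this is really a bookkeeping check based on the explicit indexing of the poles, and does not require new estimates. The technical heart of the proof has already been done in Theorem~\ref{fzeta}, so the present statement is essentially a residue reading.
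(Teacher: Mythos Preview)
Your proof is correct and follows exactly the paper's approach: decompose $\zeta(A,z)$ as in Theorem~\ref{fzeta}, observe that only $\zeta_4$ can carry a double pole at $z=1$, and read off the unique contributing summand $(j,k)=(m_1+n,m_2+n)$. One minor correction: the identity $a^1_{-n,-n}=a_{-n,-n}$ does not follow from Proposition~\ref{invprinc}, which concerns only the \emph{top-order} components $a^z_{m_1z,m_2z}=(a_{m_1,m_2})^z$, not the lower term of bi-order $(-n,-n)$; the paper instead invokes Proposition~\ref{proexp}(iii), which gives $A^1=A$ and hence agreement of all homogeneous components of the classical expansion.
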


\begin{proof}
To evaluate the limit we split again $\zeta(A,z)$ into the four terms already examined in the proof of Theorem \ref{fzeta}.  We get:
\begin{itemize}
\item[1)] $\displaystyle\lim_{z\to1}(z-1)^2 \zeta_1(A,z)=0$, since $\zeta_{1}(A,z)$ is holomorphic;
\item[2)] $\displaystyle\lim_{z\to1}(z-1)^2 \zeta_2(A,z)=0$, since $\zeta_2(A,z)$ has a pole of order one at $z=1$;
\item[3)] Similarly,  $\displaystyle\lim_{z \to 1} (z-1)^2 \zeta_3(A,z)=0$;
\item[4)] Finally,
\[
\lim_{z\to 1} (z-1)^2 \zeta_4(A,z)= \frac{1}{ m_1 m_2 (2\pi)^n} \int_{\mathbb{S}^{n-1}}\int_{\mathbb{S}^{n-1}}a^1_{-n, -n} (\theta, \theta') d\theta' d\theta.
\] 
Now the theorem follows from Proposition \ref{proexp}, which gives $A^1= A$, so that $a^1_{-n, -n}=a_{-n,-n}$.
\end{itemize}
\end{proof}

\begin{Thm}
\label{trxxi}
Let $A$ be an operator that satisfies Assumptions \ref{assu}. Then, defining
\begin{equation}
\label{xxi}
\widehat{\TR}_{x,\xi}(A)= \lim_{z\to 1} (z-1) \left[\zeta(A, z)- \frac{\Res^2_{z=1}(\zeta(A,z))}{(z-1)^2}\right],
\end{equation}
we have
\begin{equation}
\label{eq:reltr}
\widehat{\TR}_{x,\xi}(A)= -  \frac{1}{m_1}\widehat{\textnormal{Tr}}_\psi(A) - \frac{1}{m_2} \widehat{\textnormal{Tr}}_e(A) + \frac{1}{m_1 m_2}\widehat{\TR}_{\theta}(A).
\end{equation}
\end{Thm}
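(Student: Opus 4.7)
The plan is to compute directly the coefficient of $(z-1)^{-1}$ in the Laurent expansion of $\zeta(A,z)$ at $z=1$: by formula \eqref{xxi} and the meromorphic structure already established in Theorem \ref{fzeta}, this coefficient is precisely $\widehat{\TR}_{x,\xi}(A)$. I keep the four-piece splitting $\zeta(A,z)=\sum_{i=1}^{4}\zeta_i(A,z)$ of Theorem \ref{fzeta} and analyse the pole of order at most two at $z=1$ of each summand. The term $\zeta_1(A,z)$ is entire and contributes nothing; in the explicit formulas for $\zeta_2(A,z)$ and $\zeta_3(A,z)$ in the proof of Theorem \ref{fzeta}, the only simple pole at $z=1$ is produced by the indices $j=m_1+n$ and $k=m_2+n$, respectively.

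The essential step is the analysis of $\zeta_4(A,z)$. In the double sum of the proof of Theorem \ref{fzeta}, the unique index pair producing a pole of order two at $z=1$ is $(j,k)=(m_1+n,m_2+n)$; setting $F(z)=I^{m_2(z-1)-n}_{m_1(z-1)-n}$, the singular contribution is
\[
\frac{F(z)}{(2\pi)^n\, m_1 m_2\,(z-1)^2}.
\]
Taylor-expanding $F(z)=F(1)+F'(1)(z-1)+O((z-1)^2)$ and observing, via Theorem \ref{thm:equiv}, that $F(1)/((2\pi)^n m_1 m_2)=\Res^2_{z=1}(\zeta(A,z))$, the bracket $(z-1)\left[\zeta_4-\Res^2_{z=1}(\zeta(A,z))/(z-1)^2\right]$ evaluated at $z=1$ yields the contribution $F'(1)/((2\pi)^n m_1 m_2)$, which coincides with $\frac{1}{m_1 m_2}\widehat{\TR}_\theta(A)$ by the definition \eqref{trang}. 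The remaining simple poles of $\zeta_4$ at $z=1$ split into two families, those with $j=m_1+n,\ k\neq m_2+n$ and those with $k=m_2+n,\ j\neq m_1+n$; the extra pieces $R_{j,q}, R_{p,k}, R_{p,q}$ are holomorphic near $z=1$ for $p,q$ large, and their $p,q$-dependence drops out after summation.

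To identify these remaining contributions I combine, for each $m_i$, the residue of $\zeta_2$ (resp.\ $\zeta_3$) with the corresponding family from $\zeta_4$, and match it with $-\frac{1}{m_1}\widehat{\textnormal{Tr}}_\psi(A)$ (resp.\ $-\frac{1}{m_2}\widehat{\textnormal{Tr}}_e(A)$). In the regularised limit \eqref{trnicola} defining $\widehat{\textnormal{Tr}}_\psi$, I write $\int_{|x|\leq\tau}=\int_{|x|\leq 1}+\int_{1\leq|x|\leq\tau}$, insert the $x$-homogeneous expansion of $a_{-n,\cdot}$ inside the second integral and carry out the radial integration: the $\tau$-divergent monomials cancel exactly the subtractions prescribed in \eqref{trnicola}, while the finite part reproduces, up to the overall factor $-1/m_1$, the sum of the $\zeta_2$ residue and of the family $j=m_1+n,\ k\neq m_2+n$ inside $\zeta_4$. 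A symmetric computation yields $\widehat{\textnormal{Tr}}_e$, and adding up all the $(z-1)^{-1}$ coefficients delivers \eqref{eq:reltr}.

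The hard part will be the bookkeeping between the $\tau\to\infty$ regularisation of \eqref{trnicola} and the meromorphic expansion of $\zeta(A,z)$: one has to verify that the divergent monomials of the two procedures cancel in pairs, that each $R_{j,q}(1), R_{p,k}(1), R_{p,q}(1)$ is indeed finite and, when combined with the corresponding sums, independent of the cut-off indices $p,q$, and that the finite parts thus obtained reassemble precisely into the functionals \eqref{trnicola} and \eqref{trang}.
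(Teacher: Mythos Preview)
Your strategy is sound and will lead to \eqref{eq:reltr}, but it follows a different route from the paper's own proof. You keep the radius-$1$ decomposition of Theorem \ref{fzeta}, read off the Laurent coefficients of each $\zeta_i$ at $z=1$, and then, in a separate step, unfold the regularised limits \eqref{trnicola} (splitting $\int_{|x|\le\tau}=\int_{|x|\le1}+\int_{1\le|x|\le\tau}$ and integrating the radial part) to check that the finite parts agree with the residues you computed. The paper instead introduces a parameter $\tau>1$ into the decomposition itself, defining $D_1,\dots,D_4$ with cut-off radius $\tau$; it first evaluates $L_i(\tau)=\lim_{z\to1}(z-1)\bigl[\zeta_i(A,z)-\dots\bigr]$ as explicit functions of $\tau$, and only then lets $\tau\to\infty$. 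Because the same $\tau$ appears both in the splitting of $\zeta$ and in the counterterms of \eqref{trnicola}, the $\tau\to\infty$ limit of $L_2+L_3+L_4$ \emph{is} the right-hand side of \eqref{eq:reltr} by inspection, with no separate matching required: the divergent monomials and the $\log\tau$ terms visibly pair off. Your approach is conceptually cleaner as a Laurent-expansion computation, but the ``hard bookkeeping'' you flagged at the end is exactly what the paper sidesteps by carrying the auxiliary $\tau$ through the residue calculation; in effect, the paper interchanges the order of the two limits ($z\to1$ and $\tau\to\infty$) so that the regularised traces emerge automatically rather than having to be reassembled by hand.
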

\begin{proof}
We notice that the function 
\[
\zeta(A,z)- \frac{\Res^2_{z=1}(\zeta(A,z))}{(z-1)^2}
\]
is meromorphic with a simple pole at the point $z=1$, so the limit \eqref{xxi} exists and is finite.
In order to prove the assertion, we use a decomposition of $\R^{2n}$ into four sets defined by means of a parameter $\tau>1$,
\[
\begin{split}
&D_1=\{(x, \xi)\colon |x|\leq \tau, |\xi|\leq \tau\}, \quad D_2=\{(x, \xi)\colon |x|<\tau, |\xi|> \tau\},\\
&D_3=\{(x, \xi)\colon |x|>\tau, |\xi|<\tau\},\quad D_4=\{(x, \xi)\colon |x|\geq \tau, |\xi|\geq \tau\}.
\end{split}
\]
and set 
\[
\begin{split}
&\zeta_i(A,z)=\iint_{D_i} a^z(x,\xi)d\xi dx, \quad i=1,\ldots, 4.
\end{split}
\]

\begin{itemize}
\item[1)] 
$D_1$ is a compact set: $\zeta_1(A,z)$ is then holomorphic, so that, for any $\tau\ge 1$, $\displaystyle L_1=\lim_{z\to 1}(z-1)\zeta_1(A,z)=0$.
\item[2)] Expanding $a^z$ with respect to $\xi$, we find
\[
\begin{split}
\zeta_2(A,z)=&-\frac{1}{(2\pi)^n}\int_{|x|<\tau} \sum_{j=0}^{p-1}\frac{\tau^{m_1z+n-j}}{m_1z+n-j} \int_{\mathbb{S}^{n-1}} a^z_{m_1z-j, \cdot} (x, \theta) d\theta dx\\
&+\frac{1}{(2\pi)^n}\int_{|x|<\tau}\int_{\R^n} r_{m_1z-p, \cdot}(x, \xi) d\xi dx.
\end{split}
\]
For $p$ big enough, $r_{m_1z-p, \cdot}$ is absolutely integrable with respect to $\xi$,
So we have, for $q$ big enough, and any $\tau\ge1$,
\[
L_2=\lim_{z\to 1}(z-1)\zeta_2(A,z)=- \frac{1}{m_1(2\pi)^n}\int_{|x|< \tau} \int_{\mathbb{S}^{n-1}} a^1_{-n,\cdot}(x, \theta) d\theta dx,
\]
since any term in the limit goes to zero, apart the one corresponding to $j=n+m_1$.
\item[3)] Similarly, using the expansion of $a^z$ with respect to $x$,
\[
\begin{split}
\zeta_3(A,z)&= - \frac{1}{(2\pi)^n}\sum_{k=0}^{q-1} \frac{\tau^{m_2z+n-k}}{m_2z+n-k} \int_{\mathbb{S}^{n-1}} \int_{|\xi|<\tau} a^z_{\cdot, m_2z-k}(\theta, \xi) d\xi d\theta \\
&+ \frac{1}{(2\pi)^n}\int_{x\geq \tau} \int_{\xi \leq \tau} t^z_{\cdot, m_2z-q}(x, \xi) d\xi dx,
\end{split}
\]
so that, for $q$ big enough and any $\tau\ge1$,
\[
L_3=\lim_{z\to 1}(z-1)\zeta_3(A,z)= - \frac{1}{m_2(2\pi)^n}\int_{\mathbb{S}^{n-1}}\int_{|\xi| \leq \tau} a^1_{\cdot, -n}(\theta, \xi)d\xi d\theta.
\]
\item[4)] Expanding with respect to both the variables $x$ and $\xi$,
\[
\begin{split}
\zeta_4(A,z)&= \frac{1}{(2\pi)^n}\sum_{k=0}^{q-1} \sum_{j=0}^{p-1}\frac{\tau^{m_1z+n-j}}{m_1z+n-j}\frac{\tau^{m_2z+n-k}}{m_2z+n-k} I_{m_1z-j}^{m_2z-k} \\
&-\frac{1}{(2\pi)^n}\sum_{j=0}^{p-1}\frac{\tau^{m_1z+n-j}}{m_1z+n-j} \int_{|x|\geq \tau}\int_{\mathbb{S}^{n-1}}t^z_{m_1z-j, m_2z-q}(x, \theta) d\theta dx\\
&-\frac{1}{(2\pi)^n}\sum_{k=0}^{q-1} \frac{\tau^{m_2z+n-k}}{m_2z+n-k}\int_{\mathbb{S}^{n-1}} \int_{|\xi|\geq \tau} r^z_{m_1z-p, m_2z-k}(\theta,\xi) d\xi d\theta\\
&+\frac{1}{(2\pi)^n}\int_{|x|\geq \tau}\int_{|\xi|\geq \tau} r^z_{m_1z-p, m_2z-q}(x, \xi)dx d\xi
\end{split}
\]
So, for $p$ and $q$ big enough, and any $\tau\ge1$, we have
\[
\begin{split}
L_4=\lim_{z\to1}&(z-1)\left(\zeta_4(A,z)- \frac{\Res^2_{z=1}(\zeta(A,z))}{(z-1)^2}\right)=\\
&\,\phantom{+}\,\lim_{z\to 1} \frac{(z-1)}{m_1m_2(2 \pi)^{n}} \frac{\tau^{(m_1+m_2)(z-1)}-1}{(z-1)^2} I_{m_1z-n-m_1}^{m_2z-n-m_2}\\
&+\lim_{z\to 1} \frac{(z-1)}{m_1m_2(2 \pi)^{n}} \frac{I_{m_1z-n-m_1}^{m_2z-n-m_2} - I_{-n}^{-n}}
{(z-1)^2}\\
&+\frac{1}{m_2(2\pi)^n}\sum_{j=0, j\not=m_1+n}^{p-1} \frac{\tau^{m_1+n-j}}{m_1+n-j} I_{m_1-j}^{-n}\\
&+\frac{1}{m_1(2\pi)^n}\sum_{k=0, k\not=m_2+k}^{q-1}\frac{\tau^{m_2+n-k}}{m_2+n-k} I_{-n}^{m_2-k}\\
&-\frac{1}{m_1(2\pi)^n}\int_{|x|\geq \tau} \int_{\mathbb{S}^{n-1}}t^1_{-n, m_2-q}(x, \theta)d\theta dx\\
&-\frac{1}{m_2(2\pi)^n}\int_{\mathbb{S}^{n-1}} \int_{|\xi|\geq \tau} r^1_{m_1-p, -n}(\theta,\xi) d\xi d\theta\\
\end{split}
\]
The coefficients $I^{-n}_{m_1-j}$, $I^{m_2-k}_{-n}$, limits of corresponding integrals of the form \eqref{Iint}, are as in \eqref{trnicola}, while
the second limit coincides with the angular term $\widehat{\TR}_\theta(A)$, defined in \eqref{trang}. Moreover, the first limit goes to
\begin{align*}
\frac{1}{(2\pi)^{n}}I^{-n}_{-n}\,&\lim_{z\to 1}\frac{\tau^{(m_1+m_2)(z-1)}-1}{m_1m_2(z-1)}= 
\\
&=\frac{1}{(2\pi)^{n}}I^{-n}_{-n}\frac{m_1+m_2}{m_1m_2}\log \tau=
\frac{1}{(2\pi)^{n}}I^{-n}_{-n}\left(\frac{1}{m_2}\log \tau+ \frac{1}{m_1}\log \tau\right).
\end{align*}
\end{itemize}

\noindent
Clearly, $\widehat{\TR}_{x,\xi}(A)=\displaystyle\lim_{\tau\to+\infty}(L_1+L_2+L_3+L_4)=\lim_{\tau\to+\infty}(L_2+L_3+L_4)$. 
The two terms
\[
\int_{|x|\geq \tau} \int_{\mathbb{S}^{n-1}}t^1_{-n, m_2-q}(x, \theta)d\theta dx \,\mbox{ and }
\int_{\mathbb{S}^{n-1}}\int_{|\xi|\geq \tau} r^1_{m_1-p, -n} (\theta, \xi)d\xi d\theta
\]
in $L_4$ vanish for $\tau\to+\infty$, by the uniform continuity of the integral. Moreover, the terms in $L_4$ involving $I_{m_1-j}^{-n}$ and
$I^{m_2-k}_{-n}$ are relevant only for $m_1+n-j>0$ and $m_2+n-k>0$, respectively. Then, finally,
\[
\begin{split}
&\lim_{z\to1} (z-1)\left[\zeta(A,z)- \frac{\Res^2_{z=1}(\zeta(A,z))}{(z-1)^2}\right]=\\
&= \frac{1}{(2\pi)^n}\lim_{\tau \to \infty}\left[-\frac{1}{m_1}\int_{|x|\leq \tau}\int_{\mathbb{S}^{n-1}}a_{-n, \cdot}(x, \theta)dx d\theta\right.\\
&+\frac{1}{m_1}\sum_{k=0}^{m_2+n-1} \frac{\tau^{m_2-k}}{m_2-k} I_{-n}^{m_2-k}+ \frac{1}{m_1}(\log \tau) I^{-n}_{-n} \\
&-\frac{1}{m_2}\int_{\mathbb{S}^{n-1}}\int_{|\xi|\leq \tau} a_{\cdot, -n}(x, \theta)dx d\theta\\
&+\frac{1}{m_2}\sum_{j=0}^{m_1+n-1} \frac{\tau^{m_1-j}}{m_1-j}I_{m_1-j}^{-n}+ \left.\frac{1}{m_2}(\log \tau) I^{-n}_{-n}\right]\\
&+\frac{1}{m_1 m_2} \int_{\mathbb{S}^{n-1}}\int_{\mathbb{S}^{n-1}} \left.\frac{d}{dz}(a^z_{m_1z-n-m_1, m_2z-n-m_2})\right|_{z=1} (\theta, \theta')d\theta' d\theta.
\end{split}
\]
which, by \eqref{trnicola}, coincides with \eqref{eq:reltr}. The proof is complete.
\end{proof}

The functional $\TR$ can be extended to all $SG$-classical operators with integer order in a standard way, cfr. \cite{KA89}. Explicitely, let
$A\in L^{m_1, m_2}_\cl$, $m_1,m_2$ integers, and choose an elliptic operator $B$ of order $(m_1^\prime, m_2^\prime)$, satisfying Assumptions \ref{assu} and
$m_1^\prime>m_1$, $m_2^\prime>m_2$. We can define $\zeta(B+sA,z)$, $s\in(-1,1)$, and then set
\begin{equation}
\label{extres}
\TR(A)=m_1^\prime m_2^\prime \left.\frac{d}{ds} \Res^2_{z=1}(\zeta(B+ s A,z))\right|_{s=0}
\end{equation}
Using the expression of $\TR$ given in Theorem \ref{thm:equiv}, it is possible to prove that these definitions do not depend on the operator $B$. Moreover, with this approach it is also possibile to prove that $\TR$ is a trace on the algebra $\mathscr{A}$ of all
$SG$-classical operators with integer order modulo operators in $L^{-\infty}$, see \cite{KA89}.

%
\section{Wodzicki Residue and Weyl formula for $SG$-classical operators on manifolds with ends}
\label{cylexit}
\setcounter{equation}{0}

On the basis of the results of Section \ref{seczeta}, we can generalise the definition of Wodzicki Residue to all $SG$-classical operators with integer order on a manifold with ends $M$. In addition to this, the knowledge of the zeta function and of the related trace operators allows to describe the asymptotic behaviour of the counting function of elliptic positive selfadjoint $SG$-classical pseudodifferential operators with integer order on $M$. Comparing with the corresponding results in \cite{MP02} and \cite{NI03}, we can give a better estimate for the case $m=m_1= m_2$, both on $\R^n$ as well as, more generally, on manifolds with ends. We first briefly recall the definition of manifold with cylindrical ends, together with the extension on such objects of the concepts of rapidly-decreasing function, temperate distribution, $SG$-calculus and weighted Sobolev space: in this first part of the Section we follow \cite{MP02}, to which the reader can refere for details, with slight modifications in the definitions of the manifold with ends $M$ and of $SG$-classical operator on $M$. The results we obtain hold for manifolds with finitely many cylindrical ends: without loss of generality, to keep notation simple, in the sequel we consider manifolds with a single cylindrical end.

\begin{Def}
	A manifold with a cylindrical end is a triple $(M, X, [f])$, where $M= \mathscr{M} \amalg_C \mathscr{C}$ is a $n$-dimensional
	smooth manifold and
	\begin{enumerate}
		\item[  i)] $\mathscr{M}$ is a smooth manifold, given by $\mathscr{M}=(M_0\setminus D)\cup C$ 
		with a $n$-dimensional smooth compact manifold without boundary $M_0$, $D$ a closed disc of $M_0$ and
		$C\subset D$ a collar neighbourhood of $\partial D$ in $M_0$;
		\item[ ii)] $\mathscr{C}$ is a smooth manifold with boundary $\partial\mathscr{C}=X$, with $X$ diffeomorphic to 
		$\partial D$;
		\item[iii)] $f: [\delta_f, \infty) \times \mathbb{S}^{n-1} \rightarrow \mathscr{C}$, $\delta_f>0$,
				is a diffeomorphism, $f(\{\delta_f \}\times \mathbb{S}^{n-1})=X$ and 
				$f(\{[\delta_f,\delta_f+\varepsilon_f) \}\times \mathbb{S}^{n-1})$, $\varepsilon_f>0$,
				is diffeomorphic to $C$;  
		\item[ iv)] the symbol $\amalg_C$ means that we are gluing $\mathscr{M}$ and $\mathscr{C}$,
				through the identification of $C$ and $f(\{[\delta_f,\delta_f+\varepsilon_f) \}\times \mathbb{S}^{n-1})$;
		\item[  v)] the symbol $[f]$ represents an equivalence class in the set of functions
				\begin{align*}
					\{ g: [\delta_g, \infty) \times \mathbb{S}^{n-1} \rightarrow \mathscr{C} \colon & g \textnormal{ is a diffeomorphism, } \\
						&g(\{\delta_g\}\times \mathbb{S}^{n-1})=X\mbox{ and } \\
						&g([\delta_g, \delta_g+\varepsilon_g) \times \mathbb{S}^{n-1}),
						\mbox{ $\varepsilon_g>0$, is diffeomorphic to $C$}\}
				\end{align*}
				where $f \sim g$ if and only if there exists a diffeomorphism 
				$\Theta \in \textnormal{Diff}(\mathbb{S}^{n-1})$ such that
				\begin{equation}
					\label{econd}
					(g^{-1} \circ f)(\rho, \omega)= (\rho, \Theta(\omega))
				\end{equation}
				for all $\rho\ge \max\{\delta_f, \delta_g\}$ and $\omega \in \mathbb{S}^{n-1}$. 
	\end{enumerate}
\end{Def}

\noindent
We use the following notation: 
\begin{itemize}
\item $U_{\delta_f}= \{x \in \R^n \colon |x|> \delta_f\}$;
\item $ \mathscr{C}_\tau= f([\tau, \infty) \times \mathbb{S}^{n-1})$, where $\tau\ge\delta_f$.
The equivalence condition \eqref{econd} implies that $\mathscr{C}_\tau$ is well defined;
\item $\displaystyle \pi: \R^n\setminus\{0\}\rightarrow  (0, \infty) \times \mathbb{S}^{n-1}: x \mapsto \pi(x)= \Big(|x|, \frac{x}{|x|}\Big)$;
\item $f_\pi= f\circ \pi: \overline{U_{\delta_f}} \rightarrow \mathscr{C}$ is a parametrisation of the end. Let us notice that, setting
$F=g^{-1}_\pi \circ f_\pi$, the equivalence condition \eqref{econd} implies
\begin{equation}
	\label{cambcart}
	F(x)= |x| \; \Theta\Big(\frac{x}{|x|}\Big).
\end{equation}
We also denote the restriction of $f_\pi$ mapping $U_{\delta_f}$ onto $\dot{\mathscr{C}}=\mathscr{C}\setminus X$ by
$\dot{f}_\pi$.
\end{itemize}
The couple $(\dot{\mathscr{C}}, \dot{f}_\pi^{-1})$ is called the exit chart.
If $\mathscr{A}=\{(\Omega_i, \psi_i)\}_{i=1}^N$ is such that the subset $\{(\Omega_i, \psi_i)\}_{i=1}^{N-1}$ is a finite atlas for 
$\mathscr{M}$ and $(\Omega_N, \psi_N)=(\dot{\mathscr{C}}, \dot{f}_\pi^{-1})$, then $M$, with the atlas $\mathscr{A}$, is a $SG$-manifold (see \cite{SH87}): an atlas $\mathscr{A}$ of such kind is called \textit{admissible}. From now on, we restrict the choice of atlases on $M$ to the class of admissible ones. We introduce the following spaces, endowed with their natural topologies:

\[
\begin{split}
\mathscr{S}(U_\delta)&=\left\{u \in C^{\infty}(U_\delta) \colon \forall \alpha, \beta \in \N^n\, \forall \delta'>\delta \,\sup_{x\in U_{\delta^\prime}}|x^\alpha\partial^\beta u(x)|< \infty\right\},\\
\mathscr{S}_0(U_\delta)&= \bigcap_{\delta' \searrow \delta}\{u \in \mathscr{S}(\R_n)\colon \mathrm{supp}\,u \subseteq \overline{U_{\delta'}} \}, \\
\mathscr{S}(M)&= \{ u \in C^{\infty}(M) \colon  u \circ \dot{f}_\pi \in \mathscr{S}(U_{\delta_f}) \mbox{ for any exit map } f_\pi \},\\
\mathscr{S}^\prime(M)&\mbox{ denotes the dual space of $\mathscr{S}(M)$}.
\end{split}
\]

\begin{Def}
The set $SG^{m_1, m_2}(U_{\delta_f})$ consists of all the symbols $a \in C^{\infty}(U_{\delta_f})$ which fulfill \eqref{disSG} for $(x,\xi) \in U_{\delta_f} \times \R^n$ only. Moreover, the symbol $a$ belongs to the subset $SG_{\cl}^{m_1, m_2}(U_{\delta_f})$ if it admits expansions in asymptotic sums of homogeneous symbols with respect to $x$ and $\xi$ as in Definitions \ref{def:sgclass-a} and \ref{def:sgclass-b}, where the remainders are now given by $SG$-symbols of the required order on $U_{\delta_f}$.
\end{Def}

\noindent
Note that, since $U_{\delta_f}$ is conical, the definition of homogeneous and classical symbol on $U_{\delta_f}$ makes sense. Moreover,
the elements of the asymptotic expansions of the classical symbols can be extended by homogeneity to smooth functions on
$\R^n\setminus\{0\}$, which will be denoted by the same symbols.
It is a fact that, given an admissible atlas $\{(\Omega_i, \psi_i)\}_{i=1}^N$ on $M$, there exists a partition of unity $\{\varphi_i\}$ and a
set of smooth functions $\{\chi_i\}$ which are compatible with the $SG$-structure of $M$, that is:
\begin{itemize}
\item $\mathrm{supp}\,\varphi_i\subset\Omega_i$, $\mathrm{supp}\,\chi_i\subset\Omega_i$, $\chi_i\,\varphi_i=\varphi_i$, $i=1,\dots,N$;
\item $|\partial^\alpha(\varphi_N\circ \dot{f}_\pi)(x)|\le C_\alpha \norm{x}^{-|\alpha|}$ and 
$|\partial^\alpha(\chi_N\circ \dot{f}_\pi)(x)|\le C_\alpha \norm{x}^{-|\alpha|}$ for all $x\in U_{\delta_f}$.
\end{itemize}
Moreover, $\varphi_N$ and $\chi_N$ can be chosen so that $\varphi_N\circ \dot{f}_\pi$ and $\chi_N\circ \dot{f}_\pi$
are homogeneous of degree $0$ on $U_\delta$. We denote by
$u^*$ the composition of $u\colon \psi_i(\Omega_i)\subset\R^n\to\C$ with the coordinate patches $\psi_i$,
and by $v_*$ the composition of $v\colon \Omega_i\subset M\to \C$ with $\psi_i^{-1}$, $i=\,\dots,N$.
It is now possible to give the definition of $SG$-pseudodifferential operator on $M$:

\begin{Def}
Let $M$ be a manifold with a cylindrical end. A linear operator $A:\mathscr{S}(M)\to \mathscr{S}'(M) $ is a $SG$-pseudodifferential operator of order $(m_1, m_2)$ on $M$ if, for any admissible atlas $\{(\Omega_i, \psi_i)\}_{i=1}^N$ on $M$ with exit chart
$(\Omega_N,\psi_N)$:
\begin{itemize}
\item[1)] for all $i=1, \ldots, N-1$ and any $\varphi_i,\chi_i\in C_c^{\infty}(\Omega_i)$, 
there exist symbols $a^i(x,\xi) \in S^{m_1}(\psi_i(\Omega_i))$ such that
\[
(\chi_i A \varphi_i \,u^*)_*(x)= \iint e^{i(x-y)\cdot \xi}a^i(x,\xi) u(y) dy dx, \quad u \in C^{\infty}(\psi_i(\Omega_i));
\]
\item[2)] for any $\varphi_N,\chi_N$ of the type described above, there exists a symbol $a^N(x,\xi) \in SG^{m_1, m_2}(U_{\delta_f})$ such that
\[
(\chi_N A \varphi_N\,u^*)_*(x)= \iint e^{i(x-y)\cdot \xi}a^N(x,\xi) u(y) dy dx, \quad u \in \mathscr{S}_0(U_{\delta_f});
\]
\item[3)] $K_A$, the Schwartz kernel of $A$, is such that
\[
K_A \in C^{\infty}\big((M \times M) \setminus \Delta\big) \bigcap \mathscr{S}\big((\dot{\mathscr{C}} \times \dot{\mathscr{C}})\setminus W\big)
\]
where $\Delta$ is the diagonal of $M \times M$ and $W= (\dot{f}_\pi \times \dot{f}_\pi)(V)$ with any conical neighbourhood $V$ of the diagonal of $U_{\delta_f} \times U_{\delta_f}$.
\end{itemize}
\end{Def}

\noindent
The most important local symbol of $A$ is $a^N$, which we will also denote $a^f$, to remind its dependence on the exit chart. Our definition of $SG$-classical operator on $M$ differs slightly from the one in \cite{MP02}:

\begin{Def}
\label{clexit}
Let $A \in L^{m_1, m_2}(M)$. $A$ is a $SG$-classical operator on $M$, and we write $A \in L_{\cl}^{m_1, m_2}(M)$, if
$a^f(x,\xi) \in SG_{\cl}^{m_1, m_2}(U_{\delta_f})$ and the operator $A$, restricted to the manifold
$\mathscr{M}$, is classical in the usual sense.
\end{Def}

\noindent
The principal symbol $a_{m_1, \cdot}$ of a $SG$-classical operator $A\in L^{m_1,m_2}_\cl(M)$ is of course well-defined as
a smooth function on $T^*M$. In order to give an invariant definition of principal symbol with respect to $x$ of an operator $A \in L^{m_1, m_2}_\cl(M)$, the subbundle $T_X^*M=\{(x,\xi) \in T^*M\colon x \in X, \, \xi \in T_x^*M\}$ was introduced. The notions of ellipticity and $\Lambda$-ellipticity can be extended to operators on $M$ as well:

\begin{Def}
Let $A \in L_\cl^{m_1, m_2}(M)$ and let us fix an exit map $f_\pi$. We can define
local objects $a_{m_1-j, m_2-k}, a_{\cdot, m_2-k} $ as
\[
\begin{split}
a_{m_1-j, m_2-k}(\theta, \xi)&= a^f_{m_1-j, m_2-k}(\theta, \xi), \quad \theta \in \mathbb{S}^{n-1}, \,\xi \in \R^n \setminus\{0\},\\
a_{\cdot, m_2-k}(\theta,\xi)&=a^f_{\cdot, m_2-k}(\theta, \xi), \quad \theta \in \mathbb{S}^{n-1},\, \xi \in \R^n.
\end{split}
\]
\end{Def}

\begin{Def}
An operator $A \in L^{m_1, m_2}_\cl(M)$ is elliptic if the principal part of $a^f \in SG^{m_1, m_2}(U_{\delta_f})$ satisfies the $SG$-ellipticity conditions on $U_{\delta_f}\times\R^n$ and the operator $A$, restricted to the manifold
$\mathscr{M}$, is elliptic in the usual sense.
Similarly, $A$ is $\Lambda$-elliptic if the principal part of $a^f$ is $\Lambda$-elliptic on $U_{\delta_f}\times\R^n$ and $A$, restricted to
$\mathscr{M}$, is $\Lambda$-elliptic in the standard sense.
\end{Def}

\begin{Prop}
\label{prop:classinv}
The properties $A \in L^{m_1, m_2}(M)$ and $A\in L^{m_1, m_2}_\cl(M)$, as well as the notions of ellipticity and $\Lambda$-ellipticity, do not depend on the (admissible) atlas. Moreover, the local functions $a_{\cdot, m_2}$ and $a_{m_1, m_2}$ give rise to invariantly defined elements of $C^{\infty}(T_X^*M)$ and $C^{\infty}(T_X^*M\setminus 0)$, respectively.
\end{Prop}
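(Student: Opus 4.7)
The plan is to reduce everything to a local verification at the exit chart, since on the compact-type part $\mathscr{M}$ the stated invariances follow from the classical theory of pseudodifferential operators on a closed manifold (the changes of coordinates among non-exit charts being ordinary diffeomorphisms). Fix two admissible exit maps $f$ and $g$ and set $F=g_\pi^{-1}\circ f_\pi$. By \eqref{cambcart}, $F(x)=|x|\,\Theta(x/|x|)$ for $|x|$ large, where $\Theta\in\mathrm{Diff}(\mathbb{S}^{n-1})$; in particular $F$ is positively homogeneous of degree one at infinity, so $F$ and $F^{-1}$ are $SG$-maps of order $(0,1)$ and their Jacobians are bounded and homogeneous of degree zero. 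The first step is to record these estimates and apply the standard change-of-variables formula for pseudodifferential operators
\[
	\tilde{a}(y,\eta)\sim\sum_\alpha\frac{1}{\alpha!}\,\partial_\xi^\alpha a\bigl(F(y),[F'(y)]^T\eta\bigr)\,D_z^\alpha e^{i\rho(y,z)\cdot\eta}\Big|_{z=y},
\]
with $\rho(y,z)=F(z)-F(y)-F'(y)(z-y)$, to conclude that the pull-back of $a\in SG^{m_1,m_2}(U_{\delta_f})$ lies in $SG^{m_1,m_2}(U_{\delta_g})$. Combined with the corresponding transformation of the kernel condition (3) of the definition of $L^{m_1,m_2}(M)$, this gives the invariance of $L^{m_1,m_2}(M)$.

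Next I would exploit the degree-one homogeneity of $F$ to show preservation of the classical structure. For $|y|$ large, $[F'(y)]^T\eta$ is homogeneous of degree $0$ in $y$ and of degree $1$ in $\eta$, while each $z$-derivative of $\rho(y,z)$ produces an extra factor $\norm{y}^{-1}$. Substituting the $x$- and $\xi$-expansions of $a$ into the formula above and regrouping terms by their total homogeneity, the asymptotic series for $\tilde{a}$ is recognised as doubly $SG$-classical, with top-order components
\[
	\tilde{a}_{m_1,\cdot}(y,\eta)=a_{m_1,\cdot}\bigl(F(y),[F'(y)]^T\eta\bigr),\quad
	\tilde{a}_{\cdot,m_2}(y,\eta)=a_{\cdot,m_2}\bigl(F(y),[F'(y)]^T\eta\bigr),
\]
and analogously for $\tilde{a}_{m_1,m_2}$. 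Both ellipticity and $\Lambda$-ellipticity then follow at once from Proposition \ref{prop:lambdaell}, since the stated conditions are formulated in terms of these principal parts evaluated on unit spheres, and neither the substitution $F$ nor the linear conjugation by $[F'(y)]^T$ affects non-vanishing or the required sector condition.

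Finally, to extract the invariant objects on $T_X^*M$ and $T_X^*M\setminus 0$, I would use the exit chart to trivialize $T_X^*M\simeq \mathbb{S}^{n-1}\times\R^n$: restricting the transformation rule above to $|y|=1$ yields
\[
	\tilde{a}_{\cdot,m_2}(\omega,\eta)=a_{\cdot,m_2}\bigl(\Theta(\omega),[\Theta'(\omega)]^T\eta\bigr),
\]
which is precisely the pull-back under the cotangent lift of the boundary diffeomorphism $\Theta$ induced by \eqref{econd}; hence $a_{\cdot,m_2}$ descends to a well-defined smooth function on $T_X^*M$. For $a_{m_1,m_2}$ the same identity, together with the additional homogeneity of degree $m_1$ in $\eta$, identifies it with an element of $C^\infty(T_X^*M\setminus 0)$. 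The hard part will be the bookkeeping in the middle step: one must verify that the contributions coming from the different multi-indices $\alpha$ and from the $x$- and $\xi$-expansions of $a$ combine into the correct doubly-classical asymptotic structure of $\tilde{a}$, with exactly the claimed leading terms and no spurious terms of intermediate homogeneity. This amounts to a careful homogeneity count of the kind already carried out in the $SG$-calculus literature, in particular in \cite{SC87} and \cite{MP02}; modulo this verification, the invariant statements on $T_X^*M$ follow.
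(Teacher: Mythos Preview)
The paper does not actually prove this proposition; it is stated without argument, the surrounding text having already announced that this part of the section follows \cite{MP02}, ``to which the reader can refere for details''. So there is no in-text proof to compare against.

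Your sketch is the standard route and matches what one finds in the cited references: reduce to the exit chart, use that the transition map $F$ in \eqref{cambcart} is positively homogeneous of degree one (hence $F$, $F^{-1}$ are $SG$-maps of order $(0,1)$ with degree-zero Jacobians), feed this into the usual asymptotic change-of-variables formula for symbols, and read off the double classical structure and the transformation laws of the principal components. The invariance of ellipticity and $\Lambda$-ellipticity via Proposition~\ref{prop:lambdaell} is then automatic, and the identification of $a_{\cdot,m_2}$, $a_{m_1,m_2}$ as sections over $T_X^*M$ follows from the leading-order transformation rule, exactly as you indicate.

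One notational point worth tightening: in your last display you write $[\Theta'(\omega)]^T$, but $\Theta\in\mathrm{Diff}(\mathbb{S}^{n-1})$, so $\Theta'(\omega)$ is a map between tangent spaces of spheres, not a linear endomorphism of $\R^n$. What actually appears is $[F'(\omega)]^T$ for $|\omega|=1$, a genuine linear map on $\R^n$ whose restriction to the tangent hyperplane is the cotangent lift of $\Theta$ and whose radial part is fixed by the degree-one homogeneity of $F$. This is only a slip of notation and does not affect the argument; with it corrected, the pull-back identity you wrote is precisely the statement that $a_{\cdot,m_2}$ transforms as a function on $T_X^*M$.
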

\noindent
Then, with any $A\in L^{m_1,m_2}_\cl(M)$, it is associated an invariantly defined principal symbol in three components
$\sigma(A)=(a_{m_1,.},a_{.,m_2},a_{m_1,m_2})$. Finally, through local symbols given by $p^i(x,\xi)=\norm{\xi}^{s_1}$, $i=1,\dots,N-1$,
and $p^f(x,\xi)=\norm{\xi}^{s_1}\norm{x}^{s_2}$, $s_1,s_2\in\R$,  we get a $SG$-elliptic operator
$\Pi_{s_1,s_2}\in L^{s_1,s_2}_\cl(M)$ and introduce the (invariantly defined) weighted Sobolev spaces $H^{s_1,s_2}(M)$ as
\[
	H^{s_1,s_2}(M)=\{u\in\mathscr{S}^\prime(M)\colon \Pi_{s_1,s_2}u\in L^2(M)\}.
\] 
The properties of the spaces $H^{s_1,s_2}(\R^n)$ extend to $H^{s_1,s_2}(M)$ without any change, as well as the continuous
action of the $SG$-operators mentioned in the Introduction.

We can now formulate a set of hypotheses, analogous to Assumptions \ref{assu}, that imply the existence of $A^z$, $z\in\C$, for
$A\in L^{m_1,m_2}_\cl(M)$:
\begin{Ass}
\label{assusg}
\begin{enumerate}
\item $A \in L^{m_1, m_2}(M)$, with $m_1$ and $m_2$ positive integers;
\item $A$ is $\Lambda$-elliptic with respect to a closed sector $\Lambda$ of the complex plane with vertex at the origin;
\item $A$ is invertible as an operator from $L^2(M)$ to itself;
\item The spectrum of $A$ does not intersect the real interval $(-\infty, 0)$;
\item $A$ is $SG$-classical.
\end{enumerate}
\end{Ass}
\noindent
The definitions of $A^z$ and $\zeta(A, z)$ for such an operator on $M$ follow by the known results on a closed manifold, see \cite{SE67,SH87}, combined, via the $SG$-compatible partition of unity, with similar constructions on the end $\mathscr{C}$: through the exit chart, the latter are achieved by the same techniques used in Sections \ref{sec:zpower} and \ref{seczeta}. Note that, in view of the $SG$-structure on $M$ given by the admissible atlases and the hypotheses, $A^z$ and $\zeta(A, z)$ are invariantly defined on $M$. It is then easy to prove that the properties of $\zeta(A,z)$ extend from $\Rn$ to a general manifold with cylindrical ends. The next Theorem \ref{zetasg} is the global version of Theorem \ref{fzeta} on $M$:
\begin{Thm}
\label{zetasg}
Let $A \in L^{m_1, m_2}(M)$ satisfy Assumptions \ref{assusg}. Then
$\zeta(A,z)$ is holomorphic for $\Re(z)<\min\{-\frac{n}{m_1}, -\frac{n}{m_2}\}$ and can be extended as a meromorphic function with, at most, poles at the points
\[
z_j^1=\frac{j-n}{m_1}, \, j=0, 1, \ldots, \quad z^2_k=\frac{k-n}{m_2}, \, k=0, 1, \ldots
\]
Such poles can be of order two if and only if there exist $j$ and $k$ such that $z^1_j=z^2_k$.
\end{Thm}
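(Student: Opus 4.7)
The plan is to reduce the statement to the $\R^n$ case treated in Theorem \ref{fzeta} by a localisation argument based on the $SG$-compatible partition of unity. Fix an admissible atlas $\{(\Omega_i, \psi_i)\}_{i=1}^N$ with exit chart $(\Omega_N, \psi_N)=(\dot{\mathscr{C}}, \dot{f}_\pi^{-1})$, and an associated partition $\{\varphi_i\}$ with cut-offs $\{\chi_i\}$ satisfying the homogeneity conditions at infinity recalled in the Section. Since, by the arguments in Section \ref{sec:zpower} combined with Definition \ref{clexit}, $A^z$ is a $SG$-classical operator on $M$ of order $(m_1 z, m_2 z)$, its kernel on the diagonal decomposes as
\[
K_{A^z}(x,x) = \sum_{i=1}^N \varphi_i(x)\, K_{A^z}(x,x),
\]
and integrating term by term gives the corresponding splitting of $\zeta(A,z)$.

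For the interior indices $i=1,\dots,N-1$, the function $\varphi_i$ is supported in a relatively compact subset of $\mathscr{M}$. Pulling back through $\psi_i$, the localised operator $\chi_i A^z \varphi_i$ is a classical pseudodifferential operator in the standard sense with symbol of order $m_1 z$, so Seeley's analysis \cite{SE67} applies and yields a meromorphic contribution with at most simple poles at $z_j^1=(j-n)/m_1$, arising from the $\xi$-homogeneous terms of the local expansion.

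For $i=N$, pulling back through $\dot{f}_\pi$ converts the integral into one over $U_{\delta_f}$ against a local symbol $a^{f,z} \in SG^{m_1 z, m_2 z}_\cl(U_{\delta_f})$, multiplied by the degree-zero homogeneous factor $\varphi_N \circ \dot{f}_\pi$ and the Jacobian (which by \eqref{cambcart} does not affect the $SG$-classical structure). The four-region decomposition of $U_{\delta_f}\times\R^n$ used in the proof of Theorem \ref{fzeta} now applies verbatim, with the bounded cube $\{|x|\le 1\}$ replaced by a compactly supported collar. This produces the two families of simple poles at $z_j^1$ and $z_k^2$, and a pole of order two occurs exactly when \eqref{conpoli} is fulfilled. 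The holomorphy strip $\Re(z)<\min\{-n/m_1,-n/m_2\}$ follows as in Theorem \ref{fzeta}, while independence of the admissible atlas is guaranteed by Proposition \ref{prop:classinv}.

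The main obstacle is the treatment of the "off-diagonal" contributions which arise from writing $A^z=\sum_{i,j}\chi_i A^z \varphi_j$ with $i\neq j$, in particular those involving the interior patches coupled with the exit patch. Here one must exploit condition (3) of the definition of $SG$-operator on $M$: the kernel of $A^z$ is smooth off the diagonal and rapidly decreasing in $\mathscr{S}((\dot{\mathscr{C}}\times\dot{\mathscr{C}})\setminus W)$, so each such cross term is a smoothing operator with kernel in $\mathscr{S}(M\times M)$. Consequently it contributes an entire function to $\zeta(A,z)$ and plays no role in the location or order of the poles, which reduces the proof to combining the interior Seeley contributions with the exit-chart analysis described above.
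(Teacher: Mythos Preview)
Your proposal is correct and follows essentially the same strategy as the paper: split the analysis into the compact piece $\mathscr{M}$ (where the standard Seeley/Shubin theory gives simple poles at $z_j^1$) and the cylindrical end (where the four-region argument of Theorem \ref{fzeta} applies via the exit chart). The paper carries this out via the direct geometric splitting $\int_M = \int_{\mathscr{M}} + \int_{\mathscr{C}\setminus C}$, relying on the invariance of the density $K_{A^z}(y,y)\,dy$, rather than through a partition of unity, so your final paragraph on the off-diagonal terms $\chi_i A^z\varphi_j$, $i\neq j$, is correct but not actually needed in that formulation.
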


\begin{proof}
We have
\begin{equation}
	\label{eq:zsgm}
	\zeta(A,z)= \int_{M} K_{A^z}(y,y)dy=  \int_{\mathscr{M}}K_{A^z}(y,y)dy + \int_{\mathscr{C}\setminus C} K_{A^z}(y,y)dy.
\end{equation}
Since $K_{A^z}(y,y)dy$ has an invariant meaning on $M$, we can perform the computations through an arbitrary admissible atlas $\mathscr{A}=\{(\Omega_i,\psi_i)\}_{i=1}^N$. By the assumptions above, we know that 
$\{(\Omega_i,\psi_i)\}_{i=1}^{N-1}$ is an atlas on $\mathscr{M}$: then, by considerations completely similar to those that hold for compact manifolds without boundary, see, e.g., \cite{SH87}, Ch. 2, we can prove that the first integral in \eqref{eq:zsgm} is a complex function of $z$ with the properties stated above and, at most, poles of the type $z^1_j$, $j=0,1,\dots$ To handle the contribution on 
$\mathscr{C}\setminus C$, we fix an exit map $f_\pi$ and compute the second integral, modulo holomorphic functions of $z$, as
\[
	\int_{\overline{U_{\delta_f+\varepsilon_f}}}K_{(\Op{a^f})^z}(x,x)dx.
\]
We can then show that the remaining assertions on $\zeta(A,z)$ hold true 
by repeating the same steps of the proof of Theorem \ref{fzeta}. 
\end{proof}
We now extend the definition of Wodzicki Residue for $SG$-operators on $\R^n$ to $SG$-operators on $M$ in terms of the zeta function. First of all, choose an admissible atlas and introduce the following functionals on $L^{m_1,m_2}_\cl(M)$, analogous to those defined in \eqref{trnicola}:
\begin{equation}
\label{respsi}
\begin{split}
\TR(A)&= \frac{1}{(2\pi)^{n}} \int_{\mathbb{S}^{n-1}}\int_{\mathbb{S}^{n-1}} a_{-n,-n}(\theta, \theta')d\theta' d\theta, 
\\
\widehat{\mathrm{Tr}}^c_\psi(A)&=  \frac{1}{(2\pi)^{n}}\lim_{\tau \to \infty} \Big[ \int_{ M \setminus \mathscr{C}_\tau}\int_{\mathbb{S}^{n-1}}
a_{-n, \cdot}(x, \theta')d\theta' dx
\\
&\hspace{2.5cm}-(\log\tau)\int_{\mathbb{S}^{n-1}}\int_{\mathbb{S}^{n-1}} a_{-n,-n}(\theta, \theta')d\theta' d\theta
\\
& \hspace{2.5cm}-\sum_{k=0}^{m_2+n-1}\frac{\tau^{m_2-k}}{m_2-k}\int_{\mathbb{S}^{n-1}}\int_{\mathbb{S}^{n-1}}a_{-n, m_2-k}(\theta,\theta')
d\theta' d\theta\Big]\\
\widehat{\mathrm{Tr}}^c_e(A)&=  \frac{1}{(2\pi)^{n}}\lim_{\tau \to \infty} \Big[\int_{\mathbb{S}^{n-1}} \int_{|\xi|\leq \tau}
a_{ \cdot,-n}(\theta, \xi)d\xi d\theta 
\\
&\hspace{2.5cm}-(\log\tau)\int_{\mathbb{S}^{n-1}}\int_{\mathbb{S}^{n-1}} a_{-n,-n}(\theta, \theta')d\theta' d\theta
\\
&\hspace{2.5cm}-\sum_{j=0}^{m_1+n-1}\frac{\tau^{m_1-j}}{(m_1-j)}\int_{\mathbb{S}^{n-1}} \int_{\mathbb{S}^{n-1}} 
a_{ m_1-j,-n} (\theta, \theta') d\theta' d\theta\Big],
\end{split}
\end{equation}
and the angular term, analogous to \eqref{trang},
\begin{equation}
\widehat{\TR}_{\theta}^c(A)=\frac{1}{(2\pi)^{n}}\int_{\mathbb{S}^{n-1}}\int_{\mathbb{S}^{n-1}} \left.\frac{d}{dz}(a_{m_1z-n-m_1, m_2z-n-m_2})\right|_{z=1}(\theta, \theta')d\theta' d\theta.
\end{equation}
Then, by arguments similar to those in the proofs of Theorems \ref{thm:equiv} and \ref{trxxi}, we can prove:
\begin{Thm}
\label{resexit}
Let $A$ be an operator that satisfies Assumptions \ref{assusg} and set
\begin{equation}
	\label{reswo}
	\widehat{\TR}_{x,\xi}(A)=   -\frac{1}{m_1}\widehat{\mathrm{Tr}}^c_\psi(A) - \frac{1}{m_2}\widehat{\mathrm{Tr}}^c_e(A) 
	+\frac{1}{m_1 m_2}\widehat{\TR}_{\theta}^c(A).
\end{equation}
The functionals $\dfrac{\TR(A)}{m_1m_2}$ and $\widehat{\TR}_{x,\xi}(A)$ are the coefficients of the polar parts of order two and of order one of $\zeta(A,z)$ evaluated at $z=1$, respectively.
\end{Thm}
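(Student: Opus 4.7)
The plan is to adapt the $\R^n$-proofs of Theorems \ref{thm:equiv} and \ref{trxxi} via the admissible atlas decomposition already exploited in Theorem \ref{zetasg}. Fix an admissible atlas $\{(\Omega_i,\psi_i)\}_{i=1}^N$ with an $SG$-compatible partition of unity $\{\varphi_i\}$, $\{\chi_i\}$. Since $K_{A^z}(y,y)\,dy$ has an invariant meaning on $M$, decompose
\[
\zeta(A,z) = \sum_{i=1}^{N-1}\int_{\Omega_i} \varphi_i(y)\, K_{A^z}(y,y)\,dy + \int_{\mathscr{C}}\varphi_N(y)\, K_{A^z}(y,y)\,dy,
\]
up to entire functions of $z$ produced by the off-diagonal smoothing kernels. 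The first $N-1$ summands are interior pieces, structurally identical to the Seeley setting on a closed manifold, and by the standard theory they admit a meromorphic extension with at most simple poles at $z = (j-n)/m_1$; in particular they contribute nothing to $\Res^2_{z=1}$.

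For the second-order pole, transport the end piece to the exit chart via $\dot f_\pi$, so that the contribution becomes $\int (\varphi_N\circ\dot f_\pi)(x)\, K_{(\Op{a^f})^z}(x,x)\,dx$ with $\varphi_N\circ\dot f_\pi$ homogeneous of degree $0$ at infinity. Splitting the integration region as in the proof of Theorem \ref{fzeta} (with $D_2$, $D_3$, $D_4$ intersected with $|x|\geq\delta_f+\varepsilon_f$), only the ``$D_4$-type'' region where both $|x|$ and $|\xi|$ are large produces a double pole at $z=1$. Repeating verbatim the computation of Theorem \ref{thm:equiv} gives
\[
\Res^2_{z=1}\zeta(A,z) = \frac{1}{m_1 m_2 (2\pi)^n}\int_{\mathbb{S}^{n-1}}\!\!\int_{\mathbb{S}^{n-1}} a_{-n,-n}(\theta,\theta')\,d\theta'\,d\theta = \frac{\TR(A)}{m_1 m_2},
\]
where invariance of $a_{-n,-n}$ on $T^*_XM\setminus 0$ from Proposition \ref{prop:classinv} makes the right-hand side independent of the chosen admissible atlas.

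For the first-order coefficient, introduce the truncation parameter $\tau$ exactly as in Theorem \ref{trxxi}, but in end coordinates. Extract the four terms $L_1, L_2, L_3, L_4$: the interior contribution combines with $L_1$ into a holomorphic piece that vanishes in the limit, while $L_2$, reduced through the exit chart, produces $-\dfrac{1}{m_1(2\pi)^n}\int_{M\setminus\mathscr{C}_\tau}\int_{\mathbb{S}^{n-1}} a_{-n,\cdot}$ together with the polynomial and logarithmic divergences in $\tau$ that are cancelled precisely by the subtraction terms $(\log\tau)I^{-n}_{-n}$ and $\sum_k \tau^{m_2-k}/(m_2-k)\int\!\!\int a_{-n,m_2-k}$ in the definition \eqref{respsi} of $\widehat{\mathrm{Tr}}^c_\psi(A)$, yielding $-\dfrac{1}{m_1}\widehat{\mathrm{Tr}}^c_\psi(A)$ as $\tau\to\infty$. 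The term $L_3$ gives the analogous contribution $-\dfrac{1}{m_2}\widehat{\mathrm{Tr}}^c_e(A)$, and subtracting the double-pole part $(z-1)^{-2}\Res^2_{z=1}$ from $L_4$ leaves the angular part $\dfrac{1}{m_1 m_2}\widehat{\TR}^c_\theta(A)$.

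The main technical obstacle is the bookkeeping at the interface between interior and end contributions: one must verify that all polynomial-in-$\tau$ and $\log\tau$ divergences are confined to the end piece, so that the interior chart integrals simply add a finite, holomorphic-at-$z=1$ contribution to the limit without perturbing the coefficients in \eqref{respsi}. This is guaranteed by the support and homogeneity properties of $\varphi_N\circ\dot f_\pi$ at infinity together with the fact that the principal-symbol functions $a_{-n,\cdot}$, $a_{\cdot,-n}$, $a_{-n,-n}$ are invariantly defined on the relevant subsets of $T^*M$; once the partition-of-unity bookkeeping is in place, the remaining steps are identical to those of Theorems \ref{thm:equiv} and \ref{trxxi}, applied chart by chart.
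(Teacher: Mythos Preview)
Your approach is exactly what the paper prescribes: the paper's entire proof is the single sentence ``by arguments similar to those in the proofs of Theorems \ref{thm:equiv} and \ref{trxxi}, we can prove'', and you have fleshed out precisely that strategy via the admissible-atlas decomposition of Theorem \ref{zetasg}.

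There is, however, a bookkeeping slip in your treatment of the simple pole. You write that ``the interior contribution combines with $L_1$ into a holomorphic piece that vanishes in the limit''. This is not correct: each interior summand $\int_{\Omega_i}\varphi_i(y)\,K_{A^z}(y,y)\,dy$, $i=1,\dots,N-1$, is a Seeley-type kernel trace on a compact piece and has a \emph{simple} pole at $z=1$, with residue proportional to $\int_{\Omega_i}\varphi_i\int_{\mathbb{S}^{n-1}}a_{-n,\cdot}$. These residues do not disappear; they must be added to the $L_2$-type residue coming from the exit chart (which only covers $\{\delta_f<|x|<\tau\}$) in order to reconstruct the full integral $\int_{M\setminus\mathscr{C}_\tau}\int_{\mathbb{S}^{n-1}}a_{-n,\cdot}$ appearing in the definition \eqref{respsi} of $\widehat{\mathrm{Tr}}^c_\psi(A)$. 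In other words, the interior charts play the role of part of $L_2$, not of $L_1$. Correspondingly, the divergent $\log\tau$ and $\tau^{m_2-k}$ counterterms are supplied by $L_4$ (exactly as in Theorem \ref{trxxi}), not by $L_2$ itself. Once you reassign the interior simple-pole residues to the $L_2$ column and let $L_4$ provide the subtractions, your argument goes through and matches the paper's intended one.
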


\begin{rem}
The functional $TR$ extends to all $SG$-classical operators on $M$ with integer order, see the argument before \eqref{extres} at the end of Section \ref{seczeta}. In this way, $TR$ turns out to be a trace on the algebra $\mathcal{A}$
of $SG$-classical operators on $M$ with integer order modulo smoothing operators.
\end{rem}

\noindent
We conclude with the proof of the Weyl formulae \eqref{weylnor}. We make use of the following Theorem, immediate corollary of results by J. Aramaki \cite{AR88}:
\begin{Thm}
\label{aramaki}
Let $A$ be a positive selfadjoint operator that satisfies Assumptions \ref{assu} or \ref{assusg}, such that 
$\zeta(A,z)$ is holomorphic on $U=\{z\in\C\colon\Re(z)< z_0+\gamma\} \setminus \{z_0\}$ for some
$\gamma>0$ and admits a pole at $z=z_0<0$. Then\footnote{The Theorem of Aramaki actually requires another assumption on the decay of the $\zeta$-function on vertical strips, which is fulfilled in this case, see \cite{MSS06b}.}, if
\[
\zeta(A,z)+ \sum_{j=1}^p  \frac{A_j}{(j-1)!} \left(\frac{d}{dz}\right)^{j-1} \frac{1}{z-z_0}
\]
is holomorphic on $\{z\in\C\colon\Re(z)<z_0+\gamma\}$, we have, for some $\delta>0$,
\[
N_A(\lambda)= \sum_{j=1}^p \frac{A_j}{(j-1)!} \left.\left(\frac{d}{ds}\right)^{j-1}\left(\frac{\lambda^s}{s}\right)\right|_{s=-z_0}+ O(\lambda^{-z_0-\delta})
\]
as $\lambda\to+\infty$.
\end{Thm}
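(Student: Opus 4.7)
The plan is to deduce this statement as a direct application of the abstract Tauberian machinery developed by J.~Aramaki in \cite{AR88}, rather than to reprove the result from scratch. In its general form, Aramaki's theorem asserts that if a spectral Dirichlet series $\zeta(z)=\sum_j \lambda_j^{z}$ admits meromorphic continuation to a half-plane $\{\Re(z)<z_0+\gamma\}$ with a unique pole at $z_0<0$ of order at most $p$, and if $\zeta$ satisfies a polynomial bound on vertical strips $\Re(z)=c$ as $|\Im(z)|\to\infty$, then the associated counting function $N_A(\lambda)$ admits the stated expansion with a power-saving remainder $O(\lambda^{-z_0-\delta})$.

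The task therefore reduces to verifying two inputs. First, the meromorphic structure of $\zeta(A,z)$ near $z_0$ and the prescribed form of its principal polar part are precisely the hypotheses we are given. Second, one must check the vertical decay of $\zeta(A,z)$ on strips strictly to the left of $z_0+\gamma$; this is not part of the statement but is indispensable in order to run the contour-shifting argument. As indicated in the footnote, the required estimate for $SG$-classical operators is the content of \cite{MSS06b}; at the level of strategy it is propagated from the parameter-dependent resolvent bound $\|(A-\lambda I)^{-1}\|_{\mathcal{L}(L^2)}=O(|\lambda|^{-1})$ from Theorem \ref{sect}, through the Dunford integral representation of $A^{z}$, to a polynomial-in-$|\Im(z)|$ control of $\zeta(A,z)$.

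With these two ingredients in place, the asymptotic formula is obtained by the Perron--Mellin argument underlying Aramaki's result: one represents $N_A(\lambda)$ as an inverse Mellin--Stieltjes transform of $\zeta(A,z)$ along a vertical line in the region of absolute convergence, shifts the contour past the pole at $z_0$, collects the residue contribution
\[
\sum_{j=1}^{p}\frac{A_j}{(j-1)!}\left(\frac{d}{ds}\right)^{j-1}\!\left(\frac{\lambda^{s}}{s}\right)\bigg|_{s=-z_0},
\]
and bounds the shifted integral by $O(\lambda^{-z_0-\delta})$ using the vertical decay. The principal obstacle, and the reason the theorem is nontrivial, is exactly this strip estimate: without it one is forced into an averaged (Cesaro) asymptotic and loses the sharp remainder. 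Everything else is the standard residue calculus that converts a pole of order $p$ into a polynomial in $\log\lambda$ of degree $p-1$ multiplied by $\lambda^{-z_0}$, which is precisely the structure appearing in the statement.
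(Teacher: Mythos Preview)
Your proposal is correct and matches the paper's treatment: the paper does not give an independent proof of this theorem but states it as an ``immediate corollary of results by J.~Aramaki \cite{AR88}'', with the footnote pointing to \cite{MSS06b} for the vertical-strip decay estimate. Your write-up is in fact more detailed than the paper's, since you also sketch how the decay bound propagates from the resolvent estimate of Theorem \ref{sect} and outline the Perron--Mellin contour-shifting mechanism behind Aramaki's result.
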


\begin{Thm}
\label{weylrn}
Let $A$ satisfy Assumptions \ref{assu} or \ref{assusg}, and let $A$ be elliptic, selfadjoint and positive. Then, 
for certain $\delta_i>0$, $i=0,1,2$, the counting function $N_A(\lambda)$ associated with $A$ has the following asymptotic behaviour for $\lambda\to+\infty$:
\begin{eqnarray}
\label{asbe}
\label{m1=m2}
N_A(\lambda) =& C^1_0 \lambda^{\frac{n}{m}}\log \lambda+ C^2_0 \lambda^{\frac {n}{m}}+ O(\lambda^{\frac{n}{m}-\delta_0}) & \mbox{for } m_1=m_2=m \\
\label{m1<m2}
N_A(\lambda) =& C_1 \lambda^{\frac{n}{m_1}}+ O(\lambda^{\frac{n}{m_1}-\delta_1}) & \mbox{for } m_1<m_2\\
\label{m2<m1}
N_A(\lambda) =& C_2 \lambda^{\frac{n}{m_2}}+ O(\lambda^{\frac{n}{m_2}-\delta_2})& \mbox{for }m_1>m_2.
\end{eqnarray}
Moreover, the constants appearing in the above estimates are given by
\begin{align}
\label{constweyla}
C_0^1= \frac{1}{mn}\,\TR(A^{-\frac{n}{m}}), \quad C_0^2&=\widehat{\TR}_{x,\xi}(A^{-\frac{n}{m}})- \frac{1}{n^2}\,\TR(A^{-\frac{n}{m}}),\\
\label{constweylb}
C_1&=\widehat{\TR}_{x,\xi}(A^{-\frac{n}{m_1}}),\\
\label{constweylc}
C_2&=\widehat{\TR}_{x,\xi}(A^{-\frac{n}{m_2}}).
\end{align}
\end{Thm}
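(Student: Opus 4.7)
The strategy is to invoke Theorem \ref{aramaki} applied to $\zeta(A,z)$ at its leftmost pole, namely $z_0 = -n/\min(m_1,m_2)$. By Theorems \ref{fzeta} and \ref{zetasg}, this pole is double when $m_1 = m_2$ and simple otherwise, and every other pole of $\zeta(A,z)$ lies at distance at least $\min\!\bigl(1/m_1,\,1/m_2,\,n\,|1/m_1 - 1/m_2|\bigr)$ to the right of $z_0$, which furnishes the $\gamma > 0$ required by Aramaki. The vertical-strip decay assumed in the footnote of Theorem \ref{aramaki} is the one established in \cite{MSS06b}.

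The main bookkeeping step will be a linear change of variable that moves the pole $z_0$ to $w=1$, where the functionals $\TR$ and $\widehat{\TR}_{x,\xi}$ already live. Setting $B = A^{-n/\mu}$ with $\mu = m$ in the balanced case and $\mu = \min(m_1,m_2)$ in the unbalanced cases, and writing $w = -\mu z / n$, one has $\zeta(B,w) = \zeta(A,z(w))$ and $z - z_0 = -(n/\mu)(w-1)$. Matching Laurent expansions at $z_0$ and at $w=1$ then expresses the coefficients $c_{-2}, c_{-1}$ of $\zeta(A,z)$ as explicit constant multiples of $\Res^2_{w=1}\zeta(B,w)$ and $\widehat{\TR}_{x,\xi}(B)$. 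Using \eqref{trnic} and the order of $B$ (which is $(-n,-n)$ in the balanced case, so that the multiplicative factor in $\TR(B)$ equals $n^2$), these become $c_{-2} = \TR(A^{-n/m})/m^2$ and $c_{-1} = -(n/m)\widehat{\TR}_{x,\xi}(A^{-n/m})$, with the obvious simple-pole specialisations in the unbalanced cases.

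With the translation in hand, I would apply Aramaki separately in the three cases. In the balanced case, the formula with $p=2$ produces a leading $\lambda^{n/m}\log\lambda$ term driven by $A_2 = c_{-2}$ rescaled by $m/n$, and a $\lambda^{n/m}$ term mixing both $A_1 = -c_{-1}$ and $A_2$; substitution yields exactly $C_0^1 = \TR(A^{-n/m})/(mn)$ and $C_0^2 = \widehat{\TR}_{x,\xi}(A^{-n/m}) - \TR(A^{-n/m})/n^2$. In the unbalanced cases, Aramaki with $p=1$ yields only $A_1 \lambda^{n/m_i}/(n/m_i)$, which matches $\widehat{\TR}_{x,\xi}(A^{-n/m_i})\lambda^{n/m_i}$. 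The cylindrical-ends version of the proof is identical, invoking Theorem \ref{zetasg} in place of Theorem \ref{fzeta} and the invariant functionals \eqref{respsi}--\eqref{reswo}, which are well-defined on $M$ by Proposition \ref{prop:classinv}.

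The main obstacle I expect is formal rather than substantive: $\TR$ and $\widehat{\TR}_{x,\xi}$ were set up in Section \ref{seczeta} for $SG$-classical operators of integer order, whereas $B = A^{-n/m_i}$ has order $(-n,\,-n m_j/m_i)$, generally non-integer in the unbalanced case. I would handle this either by reading \eqref{constweylb}--\eqref{constweylc} as shorthand for the Laurent coefficients of $\zeta(B,w) = \zeta(A,-nw/m_i)$ at $w=1$ (unambiguous since the meromorphic extension of this function is already established), or by extending the trace functionals to non-integer orders via the same elliptic-operator device used at the end of Section \ref{seczeta} to extend $\TR$ to all integer orders.
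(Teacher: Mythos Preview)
Your proposal is correct and follows essentially the same route as the paper: locate the leftmost pole of $\zeta(A,z)$, express its Laurent coefficients in terms of $\TR$ and $\widehat{\TR}_{x,\xi}$ of the appropriate power of $A$ (your change of variable $w=-\mu z/n$ makes explicit what the paper only states), and apply Theorem~\ref{aramaki}. Your concern about non-integer orders in the unbalanced cases is legitimate---the paper glosses over exactly this point---so your suggested reading of \eqref{constweylb}--\eqref{constweylc} as shorthand for the Laurent coefficients of $\zeta(A,\cdot)$ is the right interpretation.
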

\begin{proof}
From Theorems \ref{fzeta} and \ref{zetasg} it follows that
\[
\zeta(A,z)= \int_{1}^{\infty} \lambda^z dN_A(\lambda)
\]
is holomorphic for $\Re(z)<\min \{ -\frac{n}{m_1},-\frac{n}{m_2} \}$. We now examine the three cases separately.
\begin{itemize}
\item[1)]$m_1=m_2=m$.\\ 
	In this case, by the results of Section \ref{seczeta} and Theorem \ref{resexit}, 
	the function $\zeta(A,z)$ is holomorphic for $\Re (z) < -\frac{n}{m}$ and
	\[
	\zeta(A,z)- \frac{A_2}{(z+\frac{n}{m})^2}+ \frac{A_1}{z+ \frac{n}{m}}
	\]
	with
	\[
		A_2= \frac{\TR(A^{-\frac{n}{m}})}{m^2},\quad
		A_1= \frac{n}{m}\,\widehat{\TR}_{x, \xi}(A^{-\frac{n}{m}}),
	\]
	can be extended to an holomorphic function on $\Re(z) < -\frac{n}{m}+\gamma$, $\gamma>0$. 
	\eqref{m1=m2} and \eqref{constweyla} now follow from Theorem \ref{aramaki}.
		\item[2)]$m_1<m_2$.\\
		Similarly, here we have that
		\[
		\zeta(A,z)+ \frac{A_1}{z+ \frac{n}{m_1}}
		\]	
		with
		\[
		A_1= \frac{n}{m_1}\,\widehat{\TR}_{x, \xi}(A^{-\frac{n}{m_1}})
		\]
		can be extended to an holomorphic function on $\Re(z) < -\frac{n}{m_1}+\gamma$, , $\gamma>0$,
		so \eqref{m1<m2} and \eqref{constweylb} follow from Theorem \ref{aramaki}.
		\item[3)]$m_1>m_2$.\\
		The proof is the same of the previous case, exchanging the role of $x$ and $\xi$.
	\end{itemize}
\end{proof}

\bibliographystyle{abbrv}
\def\cftil#1{\ifmmode\setbox7\hbox{$\accent"5E#1$}\else
  \setbox7\hbox{\accent"5E#1}\penalty 10000\relax\fi\raise 1\ht7
  \hbox{\lower1.15ex\hbox to 1\wd7{\hss\accent"7E\hss}}\penalty 10000
  \hskip-1\wd7\penalty 10000\box7}

\end{document}